\numberwithin{equation}{section}
\tikzstyle{diaanotestyle} = [
	\renewcommand{\@todonotes@drawInlineNote}{%
		\noindent{\begin{tikzpicture}[remember picture,baseline={(0,-0.3)}]%
				\draw node[diaanotestyle,font=\@todonotes@sizecommand,anchor=base west]{%
					\begin{varwidth}[t]{6.4in}
						\if@todonotes@authorgiven%
						{\@todonotes@sizecommand \@todonotes@author:\,\@todonotes@text}%
						\else%
						{\@todonotes@sizecommand \@todonotes@text}%
						\fi
				\end{varwidth}};%
			\end{tikzpicture}~}%
	}%
\numberwithin{equation}{section}
\newtheorem{theorem}{Theorem}[section]
\newtheorem{corollary}{Corollary}[section]
\newtheorem{proposition}{Proposition}[section]
\newtheorem{lemma}{Lemma}[section]
\theoremstyle{definition}
\newtheorem{remark}{Remark}[section]
\newtheorem{definition}{Definition}[section]
\newtheorem{assumption}{Assumption}
\crefname{assumption}{Assumption}{Assumptions}
\newcommand{\nn}{\mathbb{N}} 
\newcommand{\erl}{\left(-\infty , +\infty\right]} 
\newcommand{\dom}[1]{\mathrm{dom}\,{#1}} 
\newcommand{\idom}[1]{\mathrm{int\,(dom}\,{#1})} 
\newcommand{\conv}[1]{\mathrm{conv}\,{#1}} 
\newcommand{\spn}[1]{\mathrm{span}\,{#1}} 
\DeclareMathOperator{\aff}{aff}
\newcommand{\ri}[1]{\mathrm{ri}\,{#1}} 
\newcommand{\cl}[1]{\mathrm{cl}\,{#1}} 
\newcommand{\cconv}[1]{\mathrm{cl\,(conv}\,{#1})}
\newcommand{\bdr}[1]{\mathrm{bd}\,{#1}} 
\newcommand{\argmin}{\mathop{\mathrm{argmin}}}
\newcommand{\argmax}{\mathop{\mathrm{argmax}}}
\newcommand{\prox}{\mathrm{prox}} 
\newcommand{\BBB}{\mathcal{B}}
\newcommand{\CCC}{\mathcal{C}}
\newcommand{\FFF}{\mathcal{F}}
\newcommand{\MMM}{\mathcal{M}}
\newcommand{\PPP}{\mathcal{P}}
\newcommand{\QQQ}{\mathcal{Q}}
\newcommand{\EE}{\mathbb{E}}
\newcommand{\bbS}{\mathbb{S}}
\newcommand{\real}{\mathbb{R}} 
\newcommand{\intg}{\mathbb{Z}} 
\newcommand{\nnn}{\ensuremath{{k\in{\mathbb N}}}}
\newcommand{\inte}{\ensuremath{\operatorname{int}}}
\newcommand{\inner}[2]{{\langle#1,#2\rangle}}
\newcommand{\KL}[2]{{D_{\scriptscriptstyle\text{KL}}(#1~||~#2)}}
\newcommand{\pr}{\color{purple}}
\newcommand{\withsmallskip}{\smallskip}
\newcommand\borderline{{\pr\noindent\leavevmode\xleaders\hbox{$=$}\hfill\kern0pt}\\}
\newcommand\todoborderline{{\pr\noindent\leavevmode\xleaders\hbox{\todo{}}\hfill\kern0pt}\\}
\newcommand*\samethanks[1][\value{footnote}]{\footnotemark[#1]}
\newcommand*{\rom}[1]{\expandafter\@slowromancap\romannumeral #1@}
\newcommand\xqed[1]{%
  \leavevmode\unskip\penalty9999 \hbox{}\nobreak\hfill
  \quad\hbox{#1}}
\newcommand*{\rqed}{\xqed{$\Diamond$}}
\title{
Maximum Entropy on the Mean and the Cram\'er Rate Function in Statistical Estimation and Inverse Problems: Properties, Models, and Algorithms
}
\author{Yakov Vaisbourd\thanks{Department of Mathematics and Statistics, McGill University},  Rustum Choksi\samethanks, Ariel Goodwin\samethanks, Tim Hoheisel\samethanks, \\ \& Carola-Bibiane Sch\"{o}nlieb\thanks{Department of Applied Mathematics and Theoretical Physics, University of Cambridge.}}
\date{}
\begin{document}

\maketitle

\begin{abstract}
	We explore a method of statistical estimation called {\it Maximum Entropy on the Mean} (MEM) which is based on an information-driven criterion that quantifies the compliance of a given point with a reference prior probability measure. 
	At the core of this approach lies the {\it MEM function} which is a partial minimization of the Kullback-Leibler divergence over a linear constraint. In many cases, it is known that this function admits a simpler representation (known as the {\it Cram\'{e}r rate function}). Via the connection to exponential families of probability distributions, we study general conditions under which this representation holds.  We then address how the associated {\it MEM estimator}  gives rise to a wide class of MEM-based regularized linear models for solving inverse problems. Finally, we propose an algorithmic framework to solve these problems efficiently based on the Bregman proximal gradient method, alongside proximal operators for commonly used reference distributions. The article is complemented by a software package for experimentation and exploration of the MEM approach in applications.
  \end{abstract}

{\small
\noindent{\bf Key words.} Maximum Entropy on the Mean, Statistical Estimation, Cram\'{e}r Rate Function, Kullback-Leibler Divergence, Prior Distribution, Regularization, Linear Inverse Problems, Bregman Proximal Gradient, Convex Duality, Large Deviations.\\

\noindent{\bf MSC codes.} 49M27, 29M29, 60F10, 62B10, 62H12, 90C25, 90C46\\
}

\section{Introduction}

Many models for modern applications in various disciplines are based on some form of {\it statistical estimation}, for example, the very common 
\emph{maximum likelihood (ML)} principle. In this study,  we consider an alternative approach known as the \emph{maximum entropy on the mean} (MEM).  
At its core lies the MEM function $\kappa_P$ induced by some \emph{reference distribution} $P$ and defined as
\begin{gather*}
	\kappa_P(y):=\inf\left\{\KL{Q}{P}: \mathbb{E}_Q=y, Q\in\PPP(\Omega)\right\},
\end{gather*}
where $\mathcal P(\Omega)$ stands for the set of probability measures on $\Omega\subseteq\real^d$, $\mathbb{E}_Q$ is the expected value of $Q\in \mathcal P(\Omega)$ and $\KL{Q}{P}$
stands for the Kullback-Leibler (KL) divergence of $Q$ with respect to $P$ \cite{kullback1951information} (see \Cref{sec:pre} for precise definitions).  
Thus, the MEM modeling paradigm stems from the principle of minimum discrimination information \cite{kullback1968information} which generalizes the well-known principle of maximum entropy \cite{jaynes1957information}.
In the context of information theory 
\cite{cover1999elements}, the argmin of $\kappa_P(y)$ is often referred to as the {\it information projection} of $P$ onto the set $\{ Q\in \mathcal P(\Omega) \, : \,  \mathbb{E}_Q=y\}$, the {\it closest} member of the set to $P$. 
 \withsmallskip
 
Various forms and interpretations of  MEM have been studied (see, for example, 
\cite{dacunha1990maximum, gamboa1989methode, gamboa1997bayesian, gamboa2021maximum, gzyl2003maximum, le1999new, marechal1997unification}) and found applications in various disciplines, including earth sciences \cite{fermin2006bayesian,navaza1985maximum,navaza1986use,rietsch1977maximum,urban1996retrieval}, and 
medical imaging \cite{amblard2004biomagnetic,cai2022diffuse,chowdhury2013meg,grova2006evaluation,heers2016localization}. 
A version of the  MEM method was recently explored for blind deblurring of images possessing some form of fixed symbology (for example, in barcodes)  \cite{rioux2019blind,rioux2020maximum}. There one exploited the ability of 
 of the MEM framework to facilitate the incorporation of nonlinear constraints via the introduction of a prior distribution.\withsmallskip

Despite its many interesting properties in both theory and applications, the MEM methodology has yet to find its place as a mainstream tool for statistical estimation, particularly as it pertains to solving inverse problems. One factor that might have contributed to this centers on the practical issue that there are no dedicated optimization algorithms designed to tackle models based on the MEM methodology.  Indeed, the MEM function is defined by means of an infinite-dimensional optimization problem. Previous attempts to solve models involving the MEM function relied on its finite-dimensional dual problem. To the best of the authors' knowledge, there are no dedicated optimization algorithms designed to tackle models based on the MEM methodology. Therefore, any researcher or practitioner wishing to employ the MEM framework must first overcome a notable barrier of deriving an appropriate optimization algorithm for its solution.
In this work, our goal is to fill in this gap, providing an access gate to the MEM methodology. 
\withsmallskip

Our approach is based on the fundamental work by Brown \cite[Chapter 6]{brown1986fundamentals} and complements  \cite{le1999new}
 by first proving the equivalence of the MEM function to the \emph{Cram\'er's rate} function, mostly known from its role in \emph{large deviation theory}. Cram\'er's rate function is defined by means of a finite-dimensional optimization problem as it is simply the convex conjugate of the log-normalizer (aka the cumulant generating function) of the reference distribution $P$. In many cases (i.e., choices of $P$) it admits a closed-form expression while in others it can still be evaluated efficiently. The connection between these seemingly different functions is well established in the large deviations \cite{donsker1976asymptotic}, statistics \cite{brown1986fundamentals}, and information theory \cite{le1999new} literature. Nonetheless, various assumptions imposed in the aforementioned works limit the scope of existing results. 
Employing the framework of exponential families of probability distributions \cite{brown1986fundamentals}, 
we establish the equivalence between the two functions under very mild and natural conditions, allowing us to cover many distributions of practical interest. Thus, 
models involving MEM functions can be explicitly stated using the corresponding Cram\'er functions. 
\withsmallskip

Central to our study is  {\it the MEM estimator} which is shown to be well-defined under very mild conditions. 
We further recall an insightful connection between the MEM and ML estimators as presented in \cite{brown1986fundamentals} for the case of a reference distribution from an exponential family. As with the ML counterpart, the MEM estimator has vast applications, and hence we restrict the remainder of the paper to a wide class of regularized linear models for solving inverse problems. Each model in this class involves two MEM functions, one in the role of a fidelity term and another as a regularizer (comparable to the {\it maximum a priori (MAP) estimation} framework which extends ML). 
Let us provide an example: given a measurement matrix $A\in\real^{m\times d}$, an observation vector $\hat{y}\in\real^m$ and an additional vector $p\in[0,1]^d$ representing some prior knowledge, the following optimization problem 
\begin{gather*}
	\min\left\{\frac{1}{2}\|Ax-\hat{y}\|_2^2+\sum_{i=1}^d\left[x_i\log\left(\frac{x_i}{p_i}\right)+(1-x_i)\log\left(\frac{1-x_i}{1-p_i}\right)\right]:x\in[0,1]^d\right\},\\[-0.4cm]
\hspace{-1.4cm}\underbrace{\qquad\qquad\quad}_{Fidelity}\quad\underbrace{\qquad\qquad\qquad\qquad\qquad\qquad\qquad\qquad\qquad}_{Regularization}
\end{gather*}
fits the MEM framework with normal (Gaussian) and Bernoulli reference distributions of the fidelity and regularization terms, respectively. 
Other choices of reference distributions will lead to additional models that admit similar additive composite structure. 
 Moreover, the closed-form expressions of the two functions in our example follow from the definition of Cram\'er's rate function. In models of these forms,  concrete expressions and structures with distinct geometry can be exploited to customize appropriate optimization strategies. 
Here we highlight the class of \emph{Bregman proximal gradient} (BPG) methods as an especially suitable choice for this family of models. Nevertheless, other methods are also viable alternatives; for example, adaptive and scaled, accelerated variants and dual decomposition methods which are defined by means of the same operators developed here. 
\withsmallskip

Our overall aim is to provide a self-contained, mathematically sound toolbox for working with the MEM methodology for a wide variety of models. For this reason, we provide a comprehensive list of Cram\'er functions and operators used in the algorithms and complement it with a software package. We believe this sets the basis for (and hopefully triggers) further experimentation and exploration of the MEM approach in contemporary applications.
\withsmallskip

The paper is organized as follows. In \Cref{sec:pre}, we recall some concepts and preliminary results from convex analysis and probability theory which will be used in this work. In \Cref{sec:mem}, we study the MEM and Cram\'er rate functions and establish the equivalence between the two under very mild and natural conditions. This allows us to use the accessible definition of the Cram\'er function and derive tractable expressions for a wide class of possible reference distributions which closes this section (see \cref{mem:tbl:cramer}). \Cref{sec:model} is devoted to the MEM models considered in this work, and in \Cref{sec:algos}, we present the algorithms for solving such models. We end with a few concrete examples of problems and corresponding algorithms crafted from the operators derived in this work. 
An appendix provides 
the details of a variety of Cram\'er rate function computations.

\section{Preliminaries}
\label{sec:pre}

\subsection{Convex Analysis}

We recall here some definitions and results from convex analysis. Further details and proofs can be found in various textbooks such as \cite{bauschke2011convex,beck2017first,rockafellar1970convex}. \withsmallskip

The \emph{affine hull} of a set $S\subseteq\real^d$ is the smallest affine subspace containing $S$. For any point $y\in S$, we have the following relation
\begin{gather}
	\label{mem:eq:affine_hull}
	\aff S =y+\spn{(S-y)},
\end{gather}
where $\spn{S}$ stands for the linear hull of $S$.
The dimension of $\aff{S}$ is defined as $\dim(\aff{S}):=\dim\left(\spn{(S-y)}\right)$. The interior, closure, and boundary of a set are denoted as $\inte{S},~\cl{S}$ and $\bdr{S}$, respectively.
\withsmallskip

The (Fenchel) conjugate of $\psi:\real^d\rightarrow[-\infty, \infty]$ is defined as 
\begin{gather*}
	\psi^*(y):=\sup\{\inner{y}{x}-\psi(x):x\in\real^d\}.
\end{gather*}
The function $\psi$ is proper if $\psi(x)>-\infty$ for all $x\in\real^d$ and $\dom \psi := \{x\in\real^d:\psi(x)<\infty\}\neq\emptyset$. In addition, $\psi$ is closed, if its epigraph $\{(x,\alpha)\in\real^d\times\real:\psi(x)\leq \alpha\}$ is a closed set.
\withsmallskip

If $\psi$ is proper and convex then $\psi^*$ is closed, proper, and convex. For a proper function $\psi:\real^d\rightarrow\erl$, the \emph{Fenchel-Young inequality} states that 
$
	\psi(x)+\psi^*(y)\geq\inner{y}{x}.$
 If $\psi$ is proper, closed and convex then we obtain that \cite[Theorem 4.20]{beck2017first}
\begin{gather}
	\label{pre:eq:fechel_in_equality_equivs}
	\psi(x)+\psi^*(y)=\inner{y}{x}\quad\Longleftrightarrow\quad y\in\partial \psi(x) \quad\Longleftrightarrow\quad x\in\partial \psi^*(y),
\end{gather}
where $\partial \psi(x):=\{g\in\real^d:\psi(y)\geq\psi(x)+\inner{g}{y-x}~(y\in\real^d)\}$ is the \emph{subdifferential} of $\psi$ at $x\in\real^d$.
\withsmallskip

The \emph{indicator function} of a set $S\subseteq\real^d$ is denoted by $\delta_S$ and defined as $\delta_S(x)=0$ if $x\in S$ and $\delta_S(x)=+\infty$ otherwise. Its convex conjugate is known as the \emph{support function} $\sigma_S(y):=\delta_S^*(y)=\sup\{\inner{y}{x}:x\in S\}$. 

\begin{definition}[Essential smoothness and Legendre type]
	\label{pre:def:Legendre}
	Let $\psi:\real^d\rightarrow\erl$ be proper and convex. Then, $\psi$ is called \emph{essentially smooth} if it satisfies the following conditions:
	\begin{enumerate}
		\item $\idom{\psi}\neq\emptyset$;
		\item $\psi$ is differentiable on $\idom{\psi}$;
		\item $\|\nabla \psi(x^k)\|\rightarrow\infty$ for any sequence $\{x^k\in\idom{\psi}\}_\nnn\rightarrow \bar{x}\in\bdr{(\dom{\psi})}$.
	\end{enumerate}
	The last condition listed above is called \emph{steepness}. 
	An essentially smooth function $\psi$ is said to be of  \emph{Legendre type} if it is strictly convex on $\idom{\psi}$.
\end{definition}
For  $\psi:\real^d\rightarrow\erl$ closed and of Legendre type, the following hold  \cite[Theorem 26.5]{rockafellar1970convex}: 
\begin{enumerate}
	\item $\psi^*$ is of Legendre type.
	\item $\nabla\psi:\idom{\psi}\rightarrow\idom{\psi^*}$ is a bijection with $(\nabla \psi)^{-1}=\nabla \psi^*$.
\end{enumerate}
The \emph{Bregman distance} induced by a function $\psi$ of Legendre type is defined as \cite{bregman1967relaxation}
\begin{gather*}
	D_\psi(y,x) = \psi(y)-\psi(x)-\inner{\nabla \psi(x)}{y-x}\qquad (x\in\idom \psi, y\in\dom\psi). 
\end{gather*}
For any $(x,y)\in\idom \psi\times\dom\psi$, the Bregman distance is nonnegative $D_{\psi}(y,x)\geq0$, and equality holds if and only if $x=y$ due to strict convexity of $\psi$ \cite{bregman1967relaxation}. However, in general, $D_\psi$ is not symmetric, unless $\psi=(1/2)\|\cdot\|^2$ \cite[Lemma 3.16]{bauschke2001joint}. 
The Bregman distance induced by a function $\psi$ of Legendre type satisfies the following additional properties \cite[Theorem 3.7]{bauschke1997legendre}:  For any $x,y\in\idom{\psi}$ it holds that
\begin{gather}
	\label{pre:eq:Bregman_duality}
	D_\psi(y,x) = D_{\psi^*}(\nabla \psi(x),\nabla \psi(y)).
\end{gather}
The Bregman distance is strictly convex with respect to its first argument. Moreover, for two functions $\psi_1$ and $\psi_2$ differentiable at $x\in\idom {\psi_1}\cap\idom{\psi_2}$ 
\begin{gather}
\label{pre:eq:Bregman_linear_additivity}
D_{\alpha\psi_1+\beta\psi_2}(y,x)=\alpha D_{\psi_1}(y,x)+\beta D_{\psi_2}(y,x)\quad (y\in\dom \psi_1\cap\dom \psi_2,~\alpha,\beta\in\real).
\end{gather} 


\subsection{Probability Theory and Exponential Families}

We recall some concepts from probability theory with an emphasis on exponential families. For further detail, see e.g. \cite{barndorff1978information,brown1986fundamentals}.
\withsmallskip

Let $\MMM(\Omega)$ be the set of $\sigma$-finite measures defined over a measurable space $(\Omega,\Sigma)$ where $\Omega\subseteq\real^d$ and $\Sigma$ is a \emph{$\sigma$-algebra} on $\Omega$. 
The \emph{support} of $\rho$, namely the minimal closed measurable set $A\in\Sigma$ such that $\rho(\Omega\setminus A)=0$, is denoted by $\Omega_\rho$. We denote by $\Omega_\rho^{cc}:=\cconv{\Omega_\rho}$ the closure of the convex hull of the support $\Omega_\rho$, which is known as the \emph{convex support} of $\rho$. Recall further that, if $\mu$ is another 
measure defined over $(\Omega,\Sigma)$, then $\mu$ is \emph{absolutely continuous} with respect to $\rho$ (denoted by $\mu\ll \rho$) if for every $A\in\Sigma$ 
such that $\rho(A)=0$ it holds that $\mu(A)=0$. In this case, the \emph{Radon-Nikodym derivative} is the unique function $h=\frac{d\mu}{d\rho}$ such that $\mu(A) = \int_A hd\rho$ for any $A\in\Sigma$. 
For a measurable space $(\Omega,\Sigma)$ 
we denote by $\nu\in \MMM(\Omega)$ the \emph{dominating measure}.
Throughout, we restrict ourselves to two scenarios: either $\Omega=\real^d$ and $\nu$ is the Lebesgue measure or $\Omega$  is a countable subset of $\real^d$ and $\nu$ is the counting measure. Let $\PPP(\Omega)$ be the set of probability measures defined over $\Omega$ and absolutely continuous with respect to $\nu$. 
We emphasize that for $P\in\PPP(\Omega)$ the support $\Omega_P$ might be a proper subset of $\Omega$, and thus there is no loss of generality in our setting even when $\Omega=\real^d$.
Furthermore, for any set $A\subseteq\real^d$ the expression $P(A)$ should be understood as $P(A\cap\Omega)$.
For $P\in\PPP(\Omega)$, the Radon-Nikodym derivative $f_{P}:=\frac{dP}{d\nu}$ is either a probability density or mass function, depending on the set $\Omega$. In both cases, we will refer to $f_P$ as the density of the distribution.\footnote{We will interchangeably refer to $P\in\PPP(\Omega)$ as either a distribution or measure.} The  \emph{expected value} (if it exists) and  \emph{moment generating function} of ${P\in\PPP(\Omega)}$ are given by
\begin{gather*}
	\mathbb{E}_P := \int_\Omega ydP(y)\in\real^d\qquad \text{and}\qquad M_P[\theta]:=\int_\Omega \exp(\inner{\cdot}{\theta})dP,
\end{gather*}
respectively.  For  $P\in\MMM(\Omega)$  absolutely continuous with respect to $\nu$, we define
\begin{gather*}
	\Theta_P:=\left\{\theta\in\real^d:\int_{\Omega} \exp(\inner{\cdot}{\theta})dP<\infty \right\},
\end{gather*}
and consider the function $\psi_P:\real^d\rightarrow\erl$ given by 
\begin{gather}
	\label{pre:eq:log_normalizer}
	\psi_P(\theta):=\begin{cases}
		\displaystyle\log \int _{\Omega}\exp\left(\inner{\cdot}{\theta}\right)dP,& \theta\in\Theta_P,\\
		+\infty, & \theta\notin\Theta_P.
	\end{cases}
\end{gather}
Then $\FFF_P:=\left\{ f_{P_\theta}(y):=
\exp\left(\inner{y}{\theta}-\psi_P(\theta)\right):\theta\in\Theta_P\right\}$,
is a \emph{standard exponential family} generated by $P$. 
Note that, the probability measure $P_\theta$ satisfying $dP_\theta=f_{P_\theta}dP$
is, by construction, a probability measure such that $P_\theta$ and $P$ are mutually absolutely continuous, hence $\Omega_{P_{\theta}}=\Omega_P$ for all $\theta\in\Theta_P$ \cite[Section 8.1]{barndorff1978information}. 
The function $\psi_P$ is called the \emph{log-normalizer} (also known as the \emph{log-partition} or \emph{log-Laplace transform} of $P$). The vector $\theta\in\real^d$ is known as the \emph{natural parameter} and the set $\Theta_P=\dom\psi_P$ is called the \emph{natural parameter space}.\footnote{It is possible  to define the exponential family $\FFF_P$ over a subset of the natural parameter space \cite[Definition 1.1]{brown1986fundamentals},  but this is not needed for our study.}  
\withsmallskip

The following results summarize some well-known properties of the log-normalizer $\psi_P$. 

\begin{proposition}[Convexity, {\cite[Theorem 1.13]{brown1986fundamentals}}]
	\label{pre:prop:exp_fam_convexity}
	Let $\FFF_P$ be an exponential family generated by $P\in\MMM(\Omega)$. Then, the natural parameter space $\Theta_P$ is a convex set, and the log-normalizer function $\psi_P:\real^d\rightarrow \erl$ is closed, proper, and convex. 
\end{proposition}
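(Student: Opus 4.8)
The plan is to read all three assertions directly off the definition \eqref{pre:eq:log_normalizer} of $\psi_P$, using Hölder's inequality for convexity and Fatou's lemma for closedness; recall throughout that $\dom\psi_P=\Theta_P$ and that $\psi_P\equiv+\infty$ outside $\Theta_P$. For the \emph{convexity} of $\Theta_P$ and of $\psi_P$, fix $\theta_0,\theta_1\in\Theta_P$ and $\lambda\in(0,1)$ and set $\theta_\lambda:=(1-\lambda)\theta_0+\lambda\theta_1$. Writing $\exp(\inner{y}{\theta_\lambda})=\exp(\inner{y}{\theta_0})^{1-\lambda}\exp(\inner{y}{\theta_1})^{\lambda}$ and applying Hölder's inequality to the measure $P$ with conjugate exponents $1/(1-\lambda)$ and $1/\lambda$ gives
\[
	\int_\Omega\exp(\inner{\cdot}{\theta_\lambda})\,dP\;\le\;\Big(\int_\Omega\exp(\inner{\cdot}{\theta_0})\,dP\Big)^{1-\lambda}\Big(\int_\Omega\exp(\inner{\cdot}{\theta_1})\,dP\Big)^{\lambda}\;<\;\infty .
\]
This shows simultaneously that $\theta_\lambda\in\Theta_P$, so $\Theta_P$ is convex, and, after taking logarithms, that $\psi_P(\theta_\lambda)\le(1-\lambda)\psi_P(\theta_0)+\lambda\psi_P(\theta_1)$. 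The endpoints $\lambda\in\{0,1\}$ are trivial, and since $\psi_P=+\infty$ off $\Theta_P$ this inequality then holds for all $\theta_0,\theta_1\in\real^d$; that is, $\psi_P$ is convex.

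For \emph{properness}, the effective domain $\dom\psi_P=\Theta_P$ is nonempty — this is implicit in $\FFF_P$ being an exponential family, and for $P\in\PPP(\Omega)$ one even has $0\in\Theta_P$ because $P$ is a probability measure. Moreover, since $P$ is a nonzero measure and the integrand $\exp(\inner{\cdot}{\theta})$ is strictly positive, $\int_\Omega\exp(\inner{\cdot}{\theta})\,dP>0$ for every $\theta\in\Theta_P$, so $\psi_P(\theta)>-\infty$ there; as $\psi_P=+\infty$ elsewhere, $\psi_P$ never attains $-\infty$, and hence it is proper.

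For \emph{closedness}, which by the definition above is equivalent to lower semicontinuity of $\psi_P$, take an arbitrary sequence $\theta^k\to\theta$. The functions $\exp(\inner{\cdot}{\theta^k})$ are nonnegative and converge pointwise to $\exp(\inner{\cdot}{\theta})$, so Fatou's lemma gives $\int_\Omega\exp(\inner{\cdot}{\theta})\,dP\le\liminf_k\int_\Omega\exp(\inner{\cdot}{\theta^k})\,dP$ (reading each integral as $+\infty$ whenever the corresponding parameter lies outside $\Theta_P$, consistently with $M_P$). Applying the continuous nondecreasing extended logarithm $\log:(0,+\infty]\to\erl$ to both sides yields $\psi_P(\theta)\le\liminf_k\psi_P(\theta^k)$, so $\psi_P$ is lower semicontinuous and therefore closed.

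I do not expect a genuine obstacle: the result is classical, cf.\ \cite[Theorem 1.13]{brown1986fundamentals}. The only points requiring a little care are the bookkeeping on the effective domain — nonemptiness of $\Theta_P$ and excluding the value $-\infty$, which is exactly where one uses that $P$ is a genuine, nonzero $\sigma$-finite measure — and the correct use of Fatou's lemma along an arbitrary (not necessarily value-convergent) sequence in the semicontinuity step, together with the elementary fact that $\log(\liminf a_k)\le\liminf\log a_k$ for a nondecreasing continuous $\log$.
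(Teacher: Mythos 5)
Your proof is correct, and the paper itself offers no proof of this proposition—it is quoted directly from Brown \cite[Theorem 1.13]{brown1986fundamentals}, whose classical argument is exactly the one you give (H\"older's inequality for convexity of $\Theta_P$ and $\psi_P$, Fatou's lemma plus monotonicity and continuity of the extended logarithm for lower semicontinuity). Your bookkeeping remarks are also the right ones: properness genuinely needs $\Theta_P\neq\emptyset$ and $P$ nonzero, which is implicit in $\FFF_P$ being a (nondegenerate) exponential family and is automatic in the paper's later use where $P\in\PPP(\Omega)$, so that $0\in\Theta_P$.
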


\begin{proposition}[Differentiability, {\cite[Theorem 2.2, Corollary 2.3]{brown1986fundamentals}}]
	Let $\FFF_P$ be an exponential family generated by $P\in\MMM(\Omega)$ and let $\theta\in\inte{\Theta_P}$. Then, the log normalizer $\psi_P:\real^d\rightarrow \erl$ is infinitely differentiable at $\theta$ and it holds that 
	$\nabla \psi_P(\theta) = \mathbb{E}_{P_\theta}$.
\end{proposition}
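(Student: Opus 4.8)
The plan is to reduce everything to differentiation under the integral sign for the moment generating function $M_P$, and then pass to $\psi_P=\log M_P$ by the chain rule; this follows the classical development in \cite{brown1986fundamentals}. Fix $\theta\in\inte{\Theta_P}$. Since $\Theta_P$ is convex (\cref{pre:prop:exp_fam_convexity}) and $\theta$ is interior, I can pick $\varepsilon>0$ such that the closed $\ell^\infty$-ball $B_\varepsilon:=\{\theta'\in\real^d:\|\theta'-\theta\|_\infty\le\varepsilon\}$ lies in $\Theta_P$ (equivalently, all $2^d$ corners $\theta+\varepsilon s$, $s\in\{-1,1\}^d$, lie in $\Theta_P$, whence so does their convex hull $B_\varepsilon$). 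All estimates below are uniform over $B_{\varepsilon/2}$, so the conclusion "differentiable at $\theta$" will in fact be obtained in the stronger form "$C^\infty$ on $\inte{\Theta_P}$".

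The main obstacle is the following local integrability claim: \emph{for every polynomial $q\colon\real^d\to\real$ and every $\theta'\in B_{\varepsilon/2}$, the function $y\mapsto q(y)\exp(\inner{y}{\theta'})$ lies in $L^1(P)$.} To prove it, use the elementary inequality that for each $k$ there is a constant with $t^k\le\mathrm{const}\cdot e^{(\varepsilon/2)t}$ for all $t\ge0$, applied coordinatewise, to obtain a constant $C$ with $|q(y)|\le C\,e^{(\varepsilon/2)\|y\|_1}$ for all $y$; since $e^{(\varepsilon/2)\|y\|_1}=\prod_{j=1}^d\max\{e^{(\varepsilon/2)y_j},e^{-(\varepsilon/2)y_j}\}=\max_{s\in\{-1,1\}^d}\exp(\inner{y}{(\varepsilon/2)s})\le\sum_{s\in\{-1,1\}^d}\exp(\inner{y}{(\varepsilon/2)s})$, we get $\int_\Omega|q(y)|\,e^{\inner{y}{\theta'}}\,dP\le C\sum_{s}\int_\Omega e^{\inner{y}{\theta'+(\varepsilon/2)s}}\,dP=C\sum_{s}M_P[\theta'+(\varepsilon/2)s]<\infty$, because each $\theta'+(\varepsilon/2)s\in B_\varepsilon\subseteq\Theta_P$.

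Granting the claim, differentiation under the integral is routine. For the first partials, the mean value theorem applied to $t\mapsto e^{\inner{y}{\theta+te_i}}$ gives, for $|h|\le\varepsilon/2$, a difference quotient $h^{-1}(e^{\inner{y}{\theta+he_i}}-e^{\inner{y}{\theta}})=y_i\,e^{\inner{y}{\theta+\xi(y)e_i}}$ with $|\xi(y)|\le|h|$, dominated in absolute value by $|y_i|\big(e^{\inner{y}{\theta+(\varepsilon/2)e_i}}+e^{\inner{y}{\theta-(\varepsilon/2)e_i}}\big)$, which lies in $L^1(P)$ by the claim; dominated convergence then yields $\partial_i M_P[\theta]=\int_\Omega y_i\,e^{\inner{y}{\theta}}\,dP$. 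Induction on the order of differentiation, using the same domination (now with monomials $y^\alpha$ in place of $y_i$), shows that every partial derivative of $M_P$ exists on $\inte{\Theta_P}$, equals $\int_\Omega y^\alpha e^{\inner{\cdot}{\theta}}\,dP$, and is continuous there (one further dominated-convergence argument, dominating $y^\alpha e^{\inner{y}{\theta'}}$ for $\theta'$ near $\theta$ by the claim); hence $M_P\in C^\infty(\inte{\Theta_P})$.

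Finally, $M_P[\theta]=e^{\psi_P(\theta)}\in(0,\infty)$ on $\inte{\Theta_P}$ since $P$ is a nonzero $\sigma$-finite measure and $e^{\inner{\cdot}{\theta}}>0$; therefore $\psi_P=\log M_P$ is $C^\infty$ on $\inte{\Theta_P}$ as a composition, which gives infinite differentiability at $\theta$. For the gradient, the chain rule together with the formula for $\partial_i M_P$ gives $\partial_i\psi_P(\theta)=\dfrac{\partial_i M_P[\theta]}{M_P[\theta]}=\int_\Omega y_i\,e^{\inner{y}{\theta}-\psi_P(\theta)}\,dP(y)=\int_\Omega y_i\,f_{P_\theta}(y)\,dP(y)=\int_\Omega y_i\,dP_\theta(y)$, where the last two equalities use the definition $f_{P_\theta}(y)=\exp(\inner{y}{\theta}-\psi_P(\theta))$ and $dP_\theta=f_{P_\theta}\,dP$ from $\FFF_P$. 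Assembling the components yields $\nabla\psi_P(\theta)=\mathbb{E}_{P_\theta}$, as claimed.
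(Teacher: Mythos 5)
Your argument is correct, but note that the paper itself offers no proof of this proposition: it is quoted verbatim from Brown \cite[Theorem 2.2, Corollary 2.3]{brown1986fundamentals}, so there is nothing internal to compare against. Your route is the standard real-variable one: the corner-point domination $e^{\inner{y}{\theta'}}\le\max_{s\in\{-1,1\}^d}e^{\inner{y}{\theta+(\varepsilon/2)s}}$ together with $|q(y)|\le C e^{(\varepsilon/2)\|y\|_1}$ gives local $L^1(P)$-integrability of $y^\alpha e^{\inner{y}{\theta'}}$, and then mean-value-theorem plus dominated convergence justifies differentiation under the integral sign, induction gives $M_P\in C^\infty(\inte\Theta_P)$, and the chain rule yields $\nabla\psi_P(\theta)=\mathbb{E}_{P_\theta}$. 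Brown's own proof reaches the same conclusion by a stronger route, establishing that the Laplace transform is \emph{analytic} on (a complex neighborhood of) $\inte\Theta_P$ via complex-analytic arguments, from which smoothness and the moment formulas follow; your approach is more elementary and entirely sufficient for the statement as used in the paper. Two small points worth tightening: for continuity of the higher partials you need a dominating function independent of $\theta'$, which is exactly the sum $\sum_{s}|y^\alpha|e^{\inner{y}{\theta+(\varepsilon/2)s}}$ already implicit in your claim (the points $\theta+(\varepsilon/2)s$ lie in the closed ball $B_{\varepsilon/2}$, so the claim applies); and your integrability claim with $q(y)=y_i$ is also what guarantees that $\mathbb{E}_{P_\theta}$ exists, which is tacitly asserted in the final identity and deserves a sentence.
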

\noindent The dimension of a convex set $S\subseteq\real^d$, denoted by $\dim{S}$, is equal to the affine dimension of $\aff{S}$. We 
assume that the exponential family generated by $P\in\MMM(\Omega)$ is \emph{minimal}, i.e.,  $\dim \Theta_P=\dim \Omega_P^{cc} = d$ or, equivalently, $\inte\Theta_P\neq\emptyset$ and $\inte\Omega_P^{cc}\neq\emptyset$.
This is not restrictive as a non-minimal exponential family can be always reduced to a minimal form \cite[Theorem 1.9]{brown1986fundamentals}. 
The following result strengthens \cref{pre:prop:exp_fam_convexity} for minimal exponential families. 
\begin{proposition}[Strict convexity, {\cite[Theorem 1.13]{brown1986fundamentals}}]
	Let $\FFF_P$ be a minimal exponential family generated by $P\in\MMM(\Omega)$. Then, the log-normalizer function $\psi_P:\real^d\rightarrow \erl$ is strictly convex over $\Theta_P$. 
\end{proposition}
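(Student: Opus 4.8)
The plan is to establish strict convexity directly from H\"older's inequality, invoking minimality only at the very end to exclude the equality case. Convexity of $\psi_P$ and of the natural parameter space $\Theta_P$ are already available from \cref{pre:prop:exp_fam_convexity}, so only the strict inequality needs proof. I would fix $\theta_1,\theta_2\in\Theta_P$ with $\theta_1\neq\theta_2$ and $\lambda\in(0,1)$, set $\theta_\lambda:=\lambda\theta_1+(1-\lambda)\theta_2$, and note that $\theta_\lambda\in\Theta_P$ by convexity of $\Theta_P$, so that $\psi_P(\theta_1),\psi_P(\theta_2),\psi_P(\theta_\lambda)$ are all finite and the strict inequality to be proved is meaningful.

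The core computation is to write $\exp(\inner{y}{\theta_\lambda})=\exp(\inner{y}{\theta_1})^{\lambda}\exp(\inner{y}{\theta_2})^{1-\lambda}$ and apply H\"older's inequality with conjugate exponents $1/\lambda$ and $1/(1-\lambda)$ with respect to the measure $P$, obtaining
\[
\int_\Omega \exp(\inner{y}{\theta_\lambda})\,dP(y)\;\le\;\left(\int_\Omega \exp(\inner{y}{\theta_1})\,dP(y)\right)^{\lambda}\left(\int_\Omega \exp(\inner{y}{\theta_2})\,dP(y)\right)^{1-\lambda},
\]
and then taking logarithms to recover $\psi_P(\theta_\lambda)\le\lambda\psi_P(\theta_1)+(1-\lambda)\psi_P(\theta_2)$. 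Strictness will follow once I show this inequality is never an equality when $\theta_1\neq\theta_2$.

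The decisive step, and the one needing the most care, is the equality analysis. Equality in H\"older's inequality forces the functions $y\mapsto\exp(\inner{y}{\theta_1})$ and $y\mapsto\exp(\inner{y}{\theta_2})$ to be linearly dependent $P$-almost everywhere; as both are strictly positive and $P$-integrable, this is equivalent to $\inner{y}{\theta_1-\theta_2}$ being equal to some constant $c$ for $P$-almost every $y$. Then the set $H:=\{y\in\real^d:\inner{y}{\theta_1-\theta_2}=c\}$ — a closed affine hyperplane precisely because $\theta_1\neq\theta_2$ — has full $P$-measure, hence contains the support $\Omega_P$ by minimality of the support, and being closed and convex it also contains $\Omega_P^{cc}=\cconv{\Omega_P}$. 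Since $\dim H=d-1$, this gives $\dim\Omega_P^{cc}\le d-1$, contradicting the minimality hypothesis $\dim\Omega_P^{cc}=d$. Therefore equality is impossible and $\psi_P(\theta_\lambda)<\lambda\psi_P(\theta_1)+(1-\lambda)\psi_P(\theta_2)$, which is the asserted strict convexity over $\Theta_P$.

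I expect the only genuine obstacle to be stating the equality condition in H\"older's inequality precisely and translating it cleanly into the dimensional contradiction; everything else is bookkeeping. A conceivable alternative is to argue via the Hessian identity $\nabla^2\psi_P(\theta)=\mathrm{Cov}_{P_\theta}$ and deduce positive definiteness on $\inte\Theta_P$ from minimality, but that route only delivers strict convexity on the interior of $\Theta_P$ and would require a separate argument to reach boundary points, so the H\"older approach is preferable since it treats all of $\Theta_P$ uniformly.
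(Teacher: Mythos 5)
Your proof is correct, and it is essentially the classical argument behind the result the paper simply cites from Brown \cite[Theorem 1.13]{brown1986fundamentals} without reproducing a proof: convexity via H\"older's inequality, with strictness obtained from the equality case. Your equality analysis is sound — equality forces $\inner{y}{\theta_1-\theta_2}=c$ for $P$-a.e.\ $y$, and since in $\real^d$ the support is contained in every closed set of full measure, the hyperplane $H$ contains $\Omega_P$ and hence $\Omega_P^{cc}$, contradicting $\dim\Omega_P^{cc}=d$ from minimality; your closing remark that the Hessian route would only give strict convexity on $\inte\Theta_P$ is also an accurate reason to prefer the H\"older argument.
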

\noindent If the log-normalizer  $\psi_P$ is essentially smooth (or  'steep' in the exponential family terminology, see, e.g., \cite[Theorem 5.27]{barndorff1978information} and \cite[Definition 3.2]{brown1986fundamentals}),  we say that the exponential family $\FFF_P$ is \emph{steep}. This condition is automatically satisfied when $\Theta_P$ is open \cite[Theorem 8.2]{barndorff1978information}. While most exponential families encountered in practice have this property, there are relevant cases when this assumption is too restrictive (e.g., \cite[Example 3.4]{brown1986fundamentals}). Thus, in order to cover all examples provided in this work, we will assume that the exponential family is steep. Summarizing the above discussion and recalling \cref{pre:def:Legendre} we have the following corollary. 
\begin{corollary}\label{pre:cor:Legendre}
	Let $\FFF_P$ be a minimal and steep exponential family generated by $P\in\MMM(\Omega)$. Then, the log normalizer function $\psi_P$ is of Legendre type. 
\end{corollary}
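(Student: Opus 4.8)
The plan is to read the two defining requirements of a function of Legendre type in \Cref{pre:def:Legendre}---essential smoothness together with strict convexity on the interior of the effective domain---directly off the three propositions just recorded. First note that \Cref{pre:prop:exp_fam_convexity} guarantees $\psi_P$ is proper and convex, so that \Cref{pre:def:Legendre} applies to it, and that by \eqref{pre:eq:log_normalizer} its effective domain is exactly the natural parameter space, $\dom{\psi_P}=\Theta_P$.

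To establish essential smoothness I would verify the three clauses of \Cref{pre:def:Legendre} in turn. Clause~1, $\inte\dom{\psi_P}=\inte\Theta_P\neq\emptyset$, is precisely the minimality assumption $\dim\Theta_P=d$. Clause~2, differentiability on $\inte\Theta_P$, follows from the differentiability proposition (\cite[Theorem 2.2, Corollary 2.3]{brown1986fundamentals}), which in fact yields $C^\infty$ smoothness there together with $\nabla\psi_P(\theta)=\mathbb{E}_{P_\theta}$. Clause~3, steepness---$\|\nabla\psi_P(\theta^k)\|\to\infty$ whenever $\{\theta^k\in\inte\Theta_P\}\to\bar\theta\in\bdr{\Theta_P}$---is exactly the content of the standing hypothesis that $\FFF_P$ is steep, under our convention that a steep exponential family is one whose log-normalizer is steep in the sense of \cite[Definition 3.2]{brown1986fundamentals} and \cite[Theorem 8.2]{barndorff1978information}, which is the exponential-family phrasing of this gradient blow-up. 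Hence $\psi_P$ is essentially smooth.

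For the second requirement, the strict-convexity proposition for minimal families (\cite[Theorem 1.13]{brown1986fundamentals}) gives strict convexity of $\psi_P$ on all of $\Theta_P$, and in particular on $\inte\Theta_P=\inte\dom{\psi_P}$. Assembling the two parts, \Cref{pre:def:Legendre} yields that $\psi_P$ is of Legendre type. I expect the only delicate point---and the only place where the hypotheses are genuinely used rather than merely quoted---to be the bookkeeping around the word ``steep'': one must confirm that $\dom{\psi_P}=\Theta_P$ has nonempty interior (minimality) and that $\psi_P$ is differentiable there (the differentiability proposition), so that the exponential-family steepness condition on $\Theta_P$ coincides verbatim with clause~3 of essential smoothness. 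Once that identification is in place the proof is a purely formal assembly of \cite[Theorems 1.13 and 2.2]{brown1986fundamentals} and involves no computation.
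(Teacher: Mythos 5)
Your proposal is correct and matches the paper's (implicit) argument: the paper states the corollary as a summary of \Cref{pre:prop:exp_fam_convexity}, the differentiability and strict-convexity propositions, and the standing steepness assumption, which is exactly the clause-by-clause verification of \Cref{pre:def:Legendre} you carry out. No gaps.
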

\noindent From the last corollary we can see that $\nabla \psi_P$ forms a bijection between $\idom {\psi_P}=\inte \Theta_P$ and $\idom {\psi_P^*}$. This relation provides a dual representation of the log-normalizer $\psi_P$ and, consequently, the distribution in question. The so-called \emph{mean value parametrization} is obtained by applying a change of variables where the natural parameter $\theta$ is replaced by $\mu\in\real^d$ such that $\mu=\mathbb{E}_{P_\theta} = \nabla \psi_P(\theta)$, i.e., $\theta=\nabla \psi_P^*(\mu)$.

The \emph{Kullback-Leibler (KL) divergence} (also known as the relative entropy) of a probability measure $Q\in\PPP(\Omega)$ with respect to $P\in\PPP(\Omega)$
is given by (see \cite{kullback1951information})
\begin{gather*}
	\KL{Q}{P}:=\begin{cases}
		\displaystyle\int_\Omega \log\left(\frac{dQ}{dP}\right)dQ,& Q\ll P,\\
		+\infty, & \text{otherwise}.
	\end{cases}
\end{gather*}
It holds that $\KL{Q}{P}\geq 0$ with equality if and only if $Q=P$ \cite[Lemma 3.1]{kullback1951information}. Thus, the Kullback-Leibler information quantifies the dissimilarity between two probability measures. We note that, in general, $\KL{Q}{P}$ is not symmetric.
Furthermore, $\KL{Q}{P}$ is jointly convex in $(Q|P)$. 
We record a special case for which the KL divergence is of particular interest.

\begin{remark}[Kullback-Leibler divergence for exponential family]
	\label{pre:rmrk:KL_special_cases}
Let $\FFF_P$ be an exponential family generated by $P\in\MMM(\Omega)$. Let $\theta_1\in\Theta_P$ and $\theta_2\in\inte\Theta_P$, thus for $i=1,2$ we have that $f_{P_{\theta_i}}\in \FFF_P$. In this case, the KL divergence between the two measures $P_{\theta_i}\in\PPP(\Omega)$ such that $dP_{\theta_i}:=f_{P_{\theta_i}}dP$ ($i=1,2$) satisfies $\KL{P_{\theta_2}}{P_{\theta_1}} = D_{\psi_P}(\theta_1,\theta_2)$ \cite[Proposition 6.3]{brown1986fundamentals}.\rqed
\end{remark}

\section{Maximum entropy on the mean and Cram\'er's rate function} 
\label{sec:mem}

For $y\in\mathbb R^d$, the density
\begin{gather}
	f_P(y):=\frac{dP}{d\nu}(y)
\end{gather}
provides an indication of the likelihood of $y$ under the distribution $P\in\PPP(\Omega)$. 
The method of \emph{Maximum Entropy on the Mean} (MEM)
suggests an alternative, information-driven function $\kappa_P:\real^d\rightarrow\erl$ given by
\begin{gather}
	\label{mem:eq:mem}
	\kappa_P(y):=\inf\left\{\KL{Q}{P} : \mathbb{E}_Q=y, Q\in\PPP(\Omega)\right\}.
\end{gather}
Here,  $\kappa_P$   measures  how  $y$ complies   with the distribution $P$, by seeking a  distribution $Q$ with expected value $y$ that minimizes $\KL{\cdot}{P}$. The distance, in terms of the KL divergence (the information gain) between the resulting and the original distributions, quantifies the compliance of $y$ with $P$. We will refer to  $\kappa_P$ as the \emph{MEM function} and to $P$ as the 
\emph{reference distribution}. 
Since $\KL{Q}{P}\geq 0$ and $\KL{Q}{P}=0$ if and only if $Q=P$, we find that the MEM function satisfies $\kappa_P(y)\geq 0$ for any $y\in\real^d$ and $\kappa_P(y)= 0$ if and only if $y=\mathbb{E}_P$. 
\withsmallskip

In most cases of interest, the MEM function admits an alternative representation that sheds light on many of its additional properties (cf. \cref{mem:thrm:MEM_properties}). More precisely, under suitable conditions (cf. \cref{mem:thrm:Cramer_and_MEM_equiv}), the MEM function coincides with the  \emph{Cram\'er rate function} \cite{cramer1938nouveau},
to which we turn now. For a given reference distribution $P\in\PPP(\Omega)$, recall the log-normalizer previously defined for a general measure in \eqref{pre:eq:log_normalizer}: 
\begin{gather*}
	\label{mem:eq:log_Laplace}
	\psi_P(\theta):=\log M_P[\theta]
	=\log \int_{\Omega} \exp\left(\inner{\cdot}{\theta}\right)dP.
\end{gather*}
 In the context of probability measures $P$, $\psi_P$ is often known as the {\it cumulant generating function}. 
The \emph{Cram\'er rate function} $\psi_P^* $ associated with $P$ is the conjugate of  $ \psi_P$, that is, 
\[ \psi_P^* (y) \, = \, \sup\{\inner{y}{\theta}-\psi_P(\theta):\theta\in\real^d\}.\]
Our central assumption (which is not too restrictive in view of our discussion above) on the prior $P$ and its exponential family $\FFF_P$ is provided below. The additional condition $0\in\inte \Theta_P$ ensures the existence of $\EE_P$. 
\begin{assumption}
	\label{mem:asmp:blanket_minimal_and_steep}
	The reference distribution $P\in\PPP(\Omega)$ generates a minimal and steep exponential family $\FFF_P$ such that $0\in\inte \Theta_P$.
\end{assumption}

The equivalence between the two seemingly different functions\footnote{$\psi_P^*$ appears   
in {\it Cram\'er's Theorem} central in large deviations theory \cite{ellis2006entropy}. A more general form of $\kappa_P$  appears in   {\it Sanov's Theorem}.} $\psi_P^*$ and $\kappa_P$
was previously established  under various assumptions: the authors of \cite[Theorem 5.2]{donsker1976asymptotic} (see also \cite{ellis2006entropy})  impose the (restrictive) assumption that $\psi_P$ is finite. On the other hand, the results in \cite[Theorem 6.17]{brown1986fundamentals} and \cite[Proposition 1]{le1999new} (see also \cite{ben1977dual} and a closely related result in \cite[Theorem 3.4]{wainwright2008graphical}) do not address  
the challenging case when $y$ resides on the boundary of the domain. This scenario turns out to be important if (and only if) the reference distribution is defined over a countable set. 
Here, we provide complete proof that overcomes these assumptions previously imposed. Our approach emphasizes the role played by the convex support of the reference distribution and leads to natural and easy-to-verify conditions.  To this end, we will first need to examine the domains $\dom \kappa_P$ and $\dom \psi_P^*$. For Cram\'er's rate function $\psi_P^*$, a characterization of the domain is summarized in the following proposition.

\begin{proposition}[Domain of the Cram\'er rate function $\psi_P^*$ {\cite[Theorems 9.1, 9.4 and 9.5]{barndorff1978information}}]\label{mem:prop:Cramer_domain}
	Let $P\in\PPP(\Omega)$ be a reference distribution satisfying \cref{mem:asmp:blanket_minimal_and_steep}. Then, 
	$\inte\Omega_P^{cc}\subseteq\dom \psi_P^*\subseteq \Omega_P^{cc}$. Moreover, the following hold: 
	\begin{enumerate}
		\item[(a)] If $\Omega_P$ is finite, then $\dom \psi_P^*=\Omega_P^{cc}$.
		\item[(b)] If $\Omega_P$ is countable, then $\dom \psi_P^*\supseteq\conv\Omega_P$. 
		\item[(c)] If $\Omega_P$ is uncountable, then $\dom \psi_P^*=\inte\Omega_P^{cc}$.
	\end{enumerate}
\end{proposition}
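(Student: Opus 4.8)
The plan is to reduce the whole statement to a single scalar quantity. Re-centering the supremum defining the conjugate and using monotonicity of the logarithm, one has
\[
\psi_P^*(y)=\sup_{\theta\in\real^d}\left[\inner{y}{\theta}-\log\int_\Omega e^{\inner{x}{\theta}}\,dP(x)\right]=-\log\inf_{\theta\in\real^d}\int_\Omega e^{\inner{x-y}{\theta}}\,dP(x),
\]
so $y\in\dom \psi_P^*$ precisely when $\inf_{\theta}\int_\Omega e^{\inner{x-y}{\theta}}\,dP>0$. Every assertion then follows by estimating this infimum along rays $\theta=tu$ ($t\ge0$, $\|u\|=1$), using only: $P$ is concentrated on its support $\Omega_P$; $\Omega_P$ being the smallest closed full-measure set, every nonempty relatively open subset of it has positive $P$-mass; and the unit sphere is compact. (Alternatively, one may import the result from \cite[Theorems 9.1, 9.4 and 9.5]{barndorff1978information} after matching the minimality/steepness conventions; what follows is a self-contained re-derivation making the role of $\Omega_P^{cc}$ explicit.)

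First I would settle the two universal inclusions $\inte\Omega_P^{cc}\subseteq\dom \psi_P^*\subseteq\Omega_P^{cc}$. Upper inclusion: if $y\notin\Omega_P^{cc}=\cconv{\Omega_P}$, strictly separating $y$ from this closed convex set gives $\theta\neq0$ and $\gamma>0$ with $\inner{x-y}{\theta}\le-\gamma$ for $P$-a.e.\ $x$, whence $\int e^{t\inner{x-y}{\theta}}\,dP\le e^{-t\gamma}\to0$ and $\psi_P^*(y)=+\infty$. Lower inclusion: given $y\in\inte\Omega_P^{cc}$ (nonempty by minimality, \cref{mem:asmp:blanket_minimal_and_steep}), choose $\varepsilon>0$ with $B(y,\varepsilon)\subseteq\Omega_P^{cc}$; since $\sigma_{\Omega_P}=\sigma_{\Omega_P^{cc}}$, we get $\sup_{x\in\Omega_P}\inner{x-y}{u}\ge\varepsilon$ for every unit $u$, so the open slab $S_u:=\{x:\inner{x-y}{u}>\varepsilon/2\}$ meets $\Omega_P$ and hence $\alpha(u):=P(S_u)>0$. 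A Fatou argument (from $\liminf_n\mathbf 1_{S_{u_n}}\ge\mathbf 1_{S_{u_0}}$) shows $u\mapsto\alpha(u)$ is lower semicontinuous, so it attains a positive minimum $\beta$ on the compact sphere; then $\int e^{t\inner{x-y}{u}}\,dP\ge e^{t\varepsilon/2}\alpha(u)\ge\beta$ for $t\ge0$ (and the value at $\theta=0$ is $1$), so the infimum is at least $\min\{1,\beta\}>0$ and $y\in\dom \psi_P^*$.

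The three cases then only sharpen the endpoints. If $\Omega_P$ is countable, for $y=\sum_{i=1}^k\lambda_i x_i$ with $\lambda_i>0$, $\sum_i\lambda_i=1$ and each $x_i$ an atom of $P$ (so $p_i:=P(\{x_i\})>0$), the weighted arithmetic--geometric mean inequality bounds $\int e^{\inner{x-y}{\theta}}\,dP\ge\sum_{i}p_i e^{\inner{x_i-y}{\theta}}$ below by the $\theta$-free constant $\prod_i(p_i/\lambda_i)^{\lambda_i}>0$ (using $\sum_i\lambda_i(x_i-y)=0$); hence the convex hull of the atoms of $P$ lies in $\dom \psi_P^*$. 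In case (a), finiteness forces every point of $\Omega_P$ to be an atom and $\Omega_P^{cc}=\conv{\Omega_P}$, so this yields $\Omega_P^{cc}\subseteq\dom \psi_P^*$ and, with the upper inclusion, equality. In case (b) the same bound yields $\conv{\Omega_P}\subseteq\dom \psi_P^*$ once one notes that for the lattice-type supports relevant here (closed discrete subsets such as $\nn$ or $\intg^d$) the atoms of $P$ exhaust $\Omega_P$. In case (c), where $\Omega=\real^d$ and $P\ll$ Lebesgue, a supporting hyperplane of the (full-dimensional) set $\Omega_P^{cc}$ at $y\in\Omega_P^{cc}\setminus\inte\Omega_P^{cc}$ gives $\theta\neq0$ with $\inner{x-y}{\theta}\le0$ $P$-a.e.; since $\{x:\inner{x-y}{\theta}=0\}$ is Lebesgue-null, hence $P$-null, we have $\inner{x-y}{\theta}<0$ $P$-a.e., and dominated convergence ($0\le e^{t\inner{x-y}{\theta}}\le1$, $\to0$) gives $\int e^{t\inner{x-y}{\theta}}\,dP\to0$, so $\psi_P^*(y)=+\infty$; with the lower inclusion this gives $\dom \psi_P^*=\inte\Omega_P^{cc}$.

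The main obstacle is exactly the boundary behaviour separating (c) from (a)--(b): whether $P$ can place mass directly onto a supporting hyperplane of its convex support. Absolute continuity forbids this, which is what makes (c) an equality; atoms, on the other hand, can sit on such a hyperplane, and -- more subtly -- in the countable case $\conv{\Omega_P}$ may strictly exceed the convex hull of the atoms (a limit of atoms carrying no mass can lie outside $\dom \psi_P^*$), which is precisely why (b) is stated only as an inclusion and why the identification of atoms with $\Omega_P$ above must be invoked. For the lattice-type supports arising in this paper nothing is lost; the general statement is the content of \cite[Theorem 9.4]{barndorff1978information}.
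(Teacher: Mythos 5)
Your derivation is correct in substance, but it is worth noting that the paper does not prove this proposition at all: it is imported wholesale from \cite[Theorems 9.1, 9.4, 9.5]{barndorff1978information}. So your self-contained argument is a genuinely different (and more informative) route. The reduction $\psi_P^*(y)=-\log\inf_\theta\int_\Omega e^{\inner{x-y}{\theta}}dP$ is sound, the separation argument for $\dom\psi_P^*\subseteq\Omega_P^{cc}$ is standard and correct, and the lower inclusion via the slabs $S_u$, positivity of $P(S_u)$ from minimality of the support, Fatou for lower semicontinuity of $u\mapsto P(S_u)$, and compactness of the sphere is a clean quantitative replacement for the duality arguments in Barndorff-Nielsen. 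The weighted AM--GM bound $\sum_i p_ie^{\inner{x_i-y}{\theta}}\ge\prod_i(p_i/\lambda_i)^{\lambda_i}$ correctly handles convex combinations of atoms, and the absolute-continuity argument in case (c) (a supporting hyperplane is Lebesgue-null, hence $P$-null, so $\inner{x-y}{\theta}<0$ a.e.\ and dominated convergence kills the integral) is exactly the right mechanism distinguishing the continuous from the discrete case.

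The one place where your proof does not deliver the statement as literally printed is case (b), and you half-acknowledge this. Your argument yields $\conv(\{x:P(\{x\})>0\})\subseteq\dom\psi_P^*$, and you pass to $\conv\Omega_P$ by asserting that for the supports ``relevant here'' the atoms exhaust $\Omega_P$. Under the paper's definition of $\Omega_P$ (minimal \emph{closed} full-measure set) this identification can genuinely fail: take $\Omega=\{0\}\cup\{1/n:n\in\nn\}$ with $P(\{1/n\})=2^{-n}$; then $0\in\Omega_P\subseteq\conv\Omega_P$, yet $\psi_P(\theta)\to-\infty$ as $\theta\to-\infty$, so $\psi_P^*(0)=+\infty$. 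Thus the obstruction you mention is not ``why (b) is stated only as an inclusion''---it shows that (b), read with the paper's $\Omega_P$, needs the atom-set reading of the support (as in Barndorff-Nielsen) to be true at all, while your proof is correct precisely in that reading and covers every countable-support distribution actually used in the paper (Poisson, multinomial, negative multinomial, Bernoulli, discrete uniform), whose supports are closed and discrete so that atoms do exhaust $\Omega_P$. If you state that hypothesis explicitly (every point of $\Omega_P$ is an atom, automatic when $\Omega_P$ is finite or closed and discrete), your argument is a complete and self-contained proof of the proposition as used in the paper.
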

\noindent In order to establish a similar characterization for the domain of the MEM function, we will need to make precise the relation between $\Omega_P$ and the expected value $\mathbb{E}_P$ for a given probability measure $P\in\PPP(\Omega)$. To this end, we first recall some additional definitions and results (see, for example, \cite[Section 6]{rockafellar1970convex}).
%
Consider two subsets $S,\hat S\subseteq\real^d$ and assume further that $S\subseteq\hat S$. Then  
	$\cl{S}\subseteq\cl{\hat S}, \inte{S}\subseteq\inte{\hat S}$ and $\conv{S}\subseteq\conv{\hat S}.$ \withsmallskip

Denote the closed Euclidean unit ball in $\real^d$ by $\BBB_d$. The \emph{relative interior} \cite[Section 6]{rockafellar1970convex} of a  convex set $S\subseteq\real^d$ is defined as 
\begin{gather*}
	\ri S :=\left\{x\in\real^d:\exists\tau>0 \text{ such that }(x+\tau\BBB_d)\cap\aff S\subseteq S\right\}.
\end{gather*}
E.g.,  for  the \emph{unit simplex} $\Delta_d:=\{y\in\real^d_+:\inner{e}{y}=1\}$ we have $\ri \Delta_d:=\{y\in\real^d_{++}:\inner{e}{y}=1\}$.
Some facts which will be used in the sequel are summarized in the following lemma. Further details and proofs can be found in \cite[Section 6, Theorem 13.1]{rockafellar1970convex}.
\begin{lemma}[On the relative interior]
	\label{mem:lem:on_ri}
	Let $S\subseteq\real^d$ be nonempty and convex. Then:
	\begin{enumerate}
		\item[(a)] It holds that $\ri(\cl{S})=\ri{S}$ and $\ri{S}\subseteq S\subseteq \cl{S}$.
		\item[(b)] If $\dim{S}=d$ then $\ri S=\inte S$ and, in particular, $\inte S\neq \emptyset$.
		\item[(c)] It holds that $x\in\ri S$ if and only if ${\sigma_{S-x}(v)\geq 0}$ where the last inequality is strict for every $v\in\real^d$ such that $-\sigma_{S}(-v)\neq \sigma_S(v)$.
	\end{enumerate}
\end{lemma}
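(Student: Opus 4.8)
The plan is to recognize this as the standard compendium of relative-interior facts and to assemble it from three classical ingredients of finite-dimensional convex analysis: the nonemptiness of the relative interior of a nonempty convex set, the line-segment principle, and the separation theorem together with the existence of proper supporting hyperplanes at relative-boundary points \cite[Sections 6, 11 and 13]{rockafellar1970convex}. Part (b) will be essentially immediate, part (a) will need a short geometric argument, and part (c) will follow by rewriting the defining condition in terms of support functions (it is in fact \cite[Theorem 13.1]{rockafellar1970convex}).

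For part (b), I would note that $\dim S = d$ forces $\aff S = \real^d$, so the constraint ``$\cap\,\aff S$'' in the definition of $\ri S$ is vacuous and $\ri S = \{x \in \real^d : \exists\,\tau>0,\ x+\tau\BBB_d\subseteq S\} = \inte S$; nonemptiness of $\inte S$ is then inherited from that of $\ri S$. For part (a), the inclusions $\ri S\subseteq S\subseteq\cl S$ are read off the definitions, since a witness $\tau$ for $x\in\ri S$ gives $x\in(x+\tau\BBB_d)\cap\aff S\subseteq S$ (as $x\in\aff S$). For $\ri(\cl S)=\ri S$: because $S\subseteq\cl S\subseteq\aff S$ with $\aff S$ closed, one has $\aff(\cl S)=\aff S$, so any $\tau$ witnessing $x\in\ri S$ also witnesses $x\in\ri(\cl S)$, giving $\ri S\subseteq\ri(\cl S)$; for the reverse inclusion I would fix $y\in\ri S$ (nonempty since $S$ is nonempty convex), take $x\in\ri(\cl S)$, extend the segment from $y$ through $x$ slightly past $x$ to a point $z\in\cl S$ (possible precisely because $x$ lies in the relative interior of $\cl S$ and $z\in\aff S$), and invoke the line-segment principle to conclude that $x$, lying strictly between $y\in\ri S$ and $z\in\cl S$, belongs to $\ri S$.

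For part (c), the key observation is the identity $\sigma_{S-x}(v)=\sigma_S(v)-\inner{v}{x}$, so that ``$\sigma_{S-x}(v)\geq 0$ for all $v$'' is exactly ``$\inner{v}{x}\leq\sigma_S(v)$ for all $v$'', which by the separation theorem is equivalent to $x\in\cl(\conv S)=\cl S$. Granting this, I would argue: if in addition the inequality is strict whenever $-\sigma_S(-v)\neq\sigma_S(v)$ (i.e., whenever $\inner{v}{\cdot}$ is nonconstant on $S$), then no supporting hyperplane of $\cl S$ at $x$ can be proper, so $x\notin\rbd{(\cl S)}$, i.e., $x\in\ri(\cl S)=\ri S$ by part (a); conversely, if $x\in\ri S$ then $x\in\cl S$ yields $\sigma_{S-x}(v)\geq 0$, and if $\sigma_{S-x}(v)=0$ for some $v$ with $\inner{v}{\cdot}$ nonconstant on $S$, then $\inner{v}{\cdot}$ attains its supremum over $S$ at $x$ while being nonconstant on $\aff S$, so there is a direction $w\in\aff S-x$ with $\inner{v}{w}>0$; scaling $w$ into the relative-interior ball around $x$ produces a point $x+w\in S$ with $\inner{v}{x+w}>\sigma_S(v)$, a contradiction. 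The main obstacle, such as it is, sits in this last direction of (c) and in the ``$\ri(\cl S)\subseteq\ri S$'' half of (a): both rest on the existence of a proper supporting hyperplane at a point outside the relative interior, which is the one genuinely nontrivial convex-analytic input and the reason the statement is quoted from \cite{rockafellar1970convex} rather than reproved from scratch.
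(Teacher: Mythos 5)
Your proposal is correct, and it takes essentially the same route as the paper: the paper gives no proof of this lemma, simply citing \cite[Section 6, Theorem 13.1]{rockafellar1970convex}, and your arguments (the prolongation/line-segment principle for $\ri(\cl{S})=\ri{S}$, the vacuity of the $\aff{S}$ constraint when $\dim S=d$, and the support-function reformulation with proper supporting hyperplanes for part (c)) are precisely the standard proofs that citation points to.
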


\begin{lemma}[Domain of expected value]\label{mem:lem:expected_val_in_ri}
	Let $P\in\PPP(\Omega)$ and assume that $\mathbb{E}_P$ exists.  
Then $\mathbb{E}_P\in \ri \Omega_P^{cc}=\ri(\conv{\Omega_P})$.
\end{lemma}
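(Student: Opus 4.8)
The plan is to first dispose of the set equality and then locate $\mathbb{E}_P$ via the support-function characterization of the relative interior from \cref{mem:lem:on_ri}(c). For the equality $\ri \Omega_P^{cc}=\ri(\conv\Omega_P)$: by definition $\Omega_P^{cc}=\cl(\conv\Omega_P)$, and $\conv\Omega_P$ is convex and nonempty (the support of a probability measure is nonempty), so \cref{mem:lem:on_ri}(a) applied to $S=\conv\Omega_P$ gives $\ri(\cl(\conv\Omega_P))=\ri(\conv\Omega_P)$, which is exactly the claimed identity.

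For the membership, set $S:=\conv\Omega_P$. By \cref{mem:lem:on_ri}(c) it suffices to show that $\sigma_{S-\mathbb{E}_P}(v)\geq 0$ for every $v\in\real^d$, with strict inequality whenever $-\sigma_S(-v)\neq\sigma_S(v)$. Since the support function is invariant under taking the convex hull, $\sigma_S(v)=\sigma_{\Omega_P}(v)=\sup_{y\in\Omega_P}\inner{v}{y}$, and by translation $\sigma_{S-\mathbb{E}_P}(v)=\sigma_{\Omega_P}(v)-\inner{v}{\mathbb{E}_P}$. If $\sigma_{\Omega_P}(v)=+\infty$ the inequality is trivially strict since $\mathbb{E}_P\in\real^d$, so assume $\sigma_{\Omega_P}(v)<\infty$. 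Then $\inner{v}{y}\leq\sigma_{\Omega_P}(v)$ for every $y\in\Omega_P$, hence for $P$-almost every $y$ because $P(\Omega\setminus\Omega_P)=0$; integrating against $P$ — using that $\inner{v}{\cdot}$ is $P$-integrable as $\mathbb{E}_P$ exists, with $\inner{v}{\mathbb{E}_P}=\int_\Omega\inner{v}{y}\,dP(y)$ by linearity of the integral — yields $\inner{v}{\mathbb{E}_P}\leq\sigma_{\Omega_P}(v)$, i.e. $\sigma_{S-\mathbb{E}_P}(v)\geq 0$.

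It remains to upgrade this to a strict inequality when $\inner{v}{\cdot}$ is not constant on $S$, equivalently $-\sigma_{\Omega_P}(-v)=\inf_{y\in\Omega_P}\inner{v}{y}\neq\sup_{y\in\Omega_P}\inner{v}{y}=\sigma_{\Omega_P}(v)$. Suppose instead equality held in the previous display; then $\int_\Omega\big(\sigma_{\Omega_P}(v)-\inner{v}{y}\big)\,dP(y)=0$ with a nonnegative integrand, so $\inner{v}{y}=\sigma_{\Omega_P}(v)$ for $P$-almost every $y$. The hyperplane $H:=\{y\in\real^d:\inner{v}{y}=\sigma_{\Omega_P}(v)\}$ is a closed measurable set with $P(\Omega\setminus H)=0$, so by the minimality of the support $\Omega_P$ we obtain $\Omega_P\subseteq H$; that is, $\inner{v}{\cdot}$ is constant on $\Omega_P$, whence $\sigma_{\Omega_P}(v)=-\sigma_{\Omega_P}(-v)$, contradicting the hypothesis. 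This proves $\sigma_{S-\mathbb{E}_P}(v)>0$ in this case and hence $\mathbb{E}_P\in\ri S$.

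The step I expect to be the main obstacle (conceptually, not technically) is the strictness clause: one must convert ``a nonnegative integrand with zero integral'' into an almost-everywhere identity and then into an everywhere identity on $\Omega_P$ — and it is precisely the \emph{defining minimality} of the support of $P$ that licenses the last passage. The nonnegativity part, by contrast, is just Jensen's inequality for the linear functional $\inner{v}{\cdot}$, which is bounded above on $\Omega_P$.
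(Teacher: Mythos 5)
Your proof is correct and takes essentially the same route as the paper: both reduce the statement to the support-function characterization of the relative interior in \cref{mem:lem:on_ri}(c), use $\sigma_{\conv{\Omega_P}}=\sigma_{\Omega_P}=\sigma_{\Omega_P^{cc}}$, and obtain $\inner{v}{\mathbb{E}_P}\leq\sigma_{\Omega_P}(v)$ by integrating the pointwise bound. The only difference is in the strictness step and it is cosmetic: the paper argues directly, exhibiting a sublevel set $\{y\in\Omega_P:\inner{v}{y}\leq\tau\}$ of positive measure for $\tau\in(-\sigma_{\Omega_P}(-v),\sigma_{\Omega_P}(v))$, while you argue by contradiction, using the minimality of the support to force $\Omega_P$ into the hyperplane $\{y:\inner{v}{y}=\sigma_{\Omega_P}(v)\}$ — both hinge on the same defining property of $\Omega_P$.
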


\begin{proof}
	By definition of $\sigma_{\Omega_P}$, for any $v\in\real^d$, it holds that
$
		\label{mem:eq:expected_value_in_relint_proof1}
		-\sigma_{\Omega_P}(-v)\leq\inner{v}{y}\leq\sigma_{\Omega_P}(v).
	$
	As $P\in\PPP(\Omega)$, this  implies, for all $v\in\real^d$, that
	\begin{gather}
		\label{mem:eq:expected_value_in_relint_proof2}
		\inner{v}{\mathbb{E}_P} = \int_{\Omega_P}\inner{v}{y}dP(y)\leq \sigma_{\Omega_P}(v)\int_{\Omega_P}dP(y)=\sigma_{\Omega_P}(v).
	\end{gather} 
	If there exists some subset $A\subseteq\Omega_P$ such that 
		$P(\{y\in A:\inner{v}{y}<\sigma_{\Omega_P}(v)\})>0$,
	then the inequality in \eqref{mem:eq:expected_value_in_relint_proof2} is strict. We will show that, for any $v\in\real^d$ such that $-\sigma_{\Omega_P}(-v)\neq \sigma_{\Omega_P}(v)$, such a subset exists;  the desired result then follows from  \cref{mem:lem:on_ri} (c) and the equivalence $\sigma_{\Omega_P^{cc}}(v)=\sigma_{\Omega_P}(v)$ \cite[Theorem 8.24]{rockafellar2009variational}. 
	Indeed, let $v\in\real^d$ such that $-\sigma_{\Omega_P}(-v)\neq \sigma_{\Omega_P}(v)$, i.e.
		$
		-\sigma_{\Omega_P}(-v)<\sigma_{\Omega_P}(v).$
	Pick $\tau\in(-\sigma_{\Omega_P}(-v),\sigma_{\Omega_P}(v))$ and consider $A=\{y\in\Omega_P:\inner{v}{y}\leq\tau\}$. 
	As  $\tau<\sigma_{\Omega_P}(v)$, we have $A\subset \{y\in\Omega_P:\inner{v}{y}<\sigma_{\Omega_P}(v)\}$,   and 
	\begin{gather*}
		P(A) = P(\{y\in\Omega_P:\inner{-v}{y}\geq-\tau\})= P(\{y\in\Omega_P:\sigma_{\Omega_P}(-v)\geq\inner{-v}{y}\geq-\tau\})>0,
	\end{gather*}
	where the strict inequality follows from the definition of $\sigma_{\Omega_P}(-v)$ and $\sigma_{\Omega_P}(-v)>-\tau$. 
	Hence, $A$ satisfies the desired conditions, which establishes the result.
\end{proof}
\noindent We are now in a position to present and prove a characterization for the domain of the MEM function, analogous to \cref{mem:prop:Cramer_domain}. We will use the following notation
\begin{gather*}
	\QQQ_P(y):=\{Q\in\PPP(\Omega):\mathbb{E}_Q=y,~Q\ll P\}. 
\end{gather*}
Observe that $y\in\dom\kappa_P$ if and only if $\QQQ_P(y)\neq\emptyset$.

\begin{lemma}[Domain of the MEM function $\kappa_P$]
	\label{mem:lem:mem_domain}
	Let $P\in\PPP(\Omega)$ be a reference distribution satisfying \cref{mem:asmp:blanket_minimal_and_steep}.
	Then:
	\begin{enumerate}
		\item[(a)] If $\Omega_P$ is countable, then $\dom \kappa_P=\conv\Omega_P$. Hence, if $\Omega_P$ is finite, then ${\dom \kappa_P=\Omega_P^{cc}}$.
		\item[(b)] If $\Omega_P$ is uncountable, then $\dom \kappa_P=\inte{\Omega_P^{cc}}$.
	\end{enumerate}
\end{lemma}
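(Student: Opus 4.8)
The plan is to use the equivalence $y\in\dom{\kappa_P}\iff\QQQ_P(y)\neq\emptyset$ recorded just before the lemma, i.e.\ to determine for which $y$ there exists $Q\in\PPP(\Omega)$ with $\mathbb{E}_Q=y$ and $Q\ll P$; in each case I will exhibit such a $Q$ with \emph{finite} $\KL{Q}{P}$, so the argument is self-contained. A fact I would use throughout: if $Q\ll P$ then $Q(\Omega_P)=1$ (since $\Omega_P$ carries full $P$-mass), hence $\Omega_Q\subseteq\Omega_P$, and applying \cref{mem:lem:expected_val_in_ri} to $Q$ (whose expectation exists and equals $y\in\real^d$) together with \cref{mem:lem:on_ri}(a) gives $\mathbb{E}_Q\in\ri(\conv{\Omega_Q})\subseteq\conv{\Omega_Q}\subseteq\conv{\Omega_P}$. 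This already yields $\dom{\kappa_P}\subseteq\conv{\Omega_P}$ in all cases, and in particular $\dom{\kappa_P}\subseteq\Omega_P^{cc}$.

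For the reverse inclusion in (a) I would argue directly: given $y\in\conv{\Omega_P}$, Carath\'eodory's theorem writes $y=\sum_{i=1}^k\lambda_i\omega_i$ with distinct $\omega_i\in\Omega_P$, $\lambda_i>0$, $\sum_i\lambda_i=1$, $k\le d+1$. Since $\Omega$ is countable, $\nu$ is the counting measure and the support $\Omega_P$ consists of atoms of $P$, so $f_P(\omega_i)>0$ and $Q:=\sum_{i=1}^k\lambda_i\delta_{\omega_i}\in\PPP(\Omega)$ satisfies $Q\ll P$, $\mathbb{E}_Q=y$ and $\KL{Q}{P}=\sum_{i=1}^k\lambda_i\log(\lambda_i/f_P(\omega_i))<\infty$. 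Hence $\dom{\kappa_P}=\conv{\Omega_P}$; when $\Omega_P$ is finite, $\conv{\Omega_P}$ is compact, hence closed, so it coincides with $\Omega_P^{cc}$.

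For (b), $\Omega$ is uncountable, so $\nu$ is Lebesgue measure on $\real^d$ and every $Q\in\PPP(\Omega)$ is non-atomic. To prove $\dom{\kappa_P}\subseteq\inte{\Omega_P^{cc}}$, I would combine $\dom{\kappa_P}\subseteq\Omega_P^{cc}$ from the first paragraph with a boundary-exclusion: if $y\in\bdr{\Omega_P^{cc}}$, then $\Omega_P^{cc}$ being closed, convex and full-dimensional (minimality, \cref{mem:asmp:blanket_minimal_and_steep}) admits a supporting hyperplane at $y$, i.e.\ some $v\neq 0$ with $\inner{v}{\omega}\le\inner{v}{y}$ for all $\omega\in\Omega_P$; any $Q\in\QQQ_P(y)$ would then have $\int(\inner{v}{y}-\inner{v}{\cdot})\,dQ=0$ with a nonnegative integrand, forcing $Q$ to be concentrated on the hyperplane $H=\{z:\inner{v}{z}=\inner{v}{y}\}$; but $H$ is Lebesgue-null, so $P(H)=0$ and, since $Q\ll P$, also $Q(H)=0$ --- a contradiction. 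For the reverse inclusion, \cref{mem:prop:Cramer_domain} gives $\inte{\Omega_P^{cc}}\subseteq\dom{\psi_P^*}\subseteq\Omega_P^{cc}$, hence $\inte{\dom{\psi_P^*}}=\inte{\Omega_P^{cc}}$; by \cref{pre:cor:Legendre} $\psi_P$ is of Legendre type, so $\nabla\psi_P$ maps $\inte\Theta_P$ bijectively onto $\inte{\dom{\psi_P^*}}=\inte{\Omega_P^{cc}}$ with $\nabla\psi_P(\theta)=\mathbb{E}_{P_\theta}$. Thus each $y\in\inte{\Omega_P^{cc}}$ equals $\mathbb{E}_{P_\theta}$ for some $\theta\in\inte\Theta_P$; taking $Q:=P_\theta$ (which is mutually absolutely continuous with $P=P_0$, using $0\in\inte\Theta_P$) and invoking \cref{pre:rmrk:KL_special_cases} together with $\psi_P(0)=0$ yields $\KL{Q}{P}=D_{\psi_P}(0,\theta)=\inner{y}{\theta}-\psi_P(\theta)<\infty$, so $y\in\dom{\kappa_P}$.

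I expect the main obstacle to be the boundary-exclusion in (b): it is precisely there that one must use both the full-dimensionality of $\Omega_P^{cc}$ (to obtain a genuine supporting hyperplane) and the non-atomicity of $P$ in the uncountable setting (to rule out an admissible $Q$ living on that hyperplane), and this is exactly why the countable and uncountable cases lead to different domains. Everything else reduces to the Carath\'eodory/Dirac construction or to direct consequences of the Legendre property of $\psi_P$ and the already-recorded description of $\dom{\psi_P^*}$.
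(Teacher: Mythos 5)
Your proof is correct, and its overall architecture coincides with the paper's: the forward inclusions via $\Omega_Q\subseteq\Omega_P$ and \cref{mem:lem:expected_val_in_ri}, the reverse inclusion in (a) via Carath\'eodory and a finitely supported $Q$ on atoms of $P$, and the reverse inclusion in (b) via \cref{mem:prop:Cramer_domain}, the Legendre property of $\psi_P$, and the tilted measure $P_\theta$ with $\theta=\nabla\psi_P^*(y)$. The one place you genuinely deviate is the forward inclusion in (b): the paper argues that $\dim\Omega_Q^{cc}=d$ (else $Q$ would be concentrated on a proper affine subspace, contradicting $Q\ll P\ll\nu$) and then applies $\mathbb{E}_Q\in\ri\Omega_Q^{cc}=\inte\Omega_Q^{cc}\subseteq\inte\Omega_P^{cc}$, whereas you exclude boundary points of $\Omega_P^{cc}$ directly by a supporting hyperplane at $y$ and the fact that the hyperplane is Lebesgue-null, hence $P$-null, hence $Q$-null. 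Both arguments exploit exactly the same fact (a measure absolutely continuous with respect to Lebesgue measure cannot charge a lower-dimensional affine set), so this is a cosmetic rather than structural difference; your version trades the relative-interior machinery for the supporting hyperplane theorem. A point in your favor is that you verify finiteness of $\KL{Q}{P}$ explicitly in both constructions (the finite sum $\sum_i\lambda_i\log(\lambda_i/f_P(\omega_i))$ in (a), and $\KL{P_\theta}{P}=D_{\psi_P}(0,\theta)=\inner{y}{\theta}-\psi_P(\theta)<\infty$ in (b)), which makes the argument independent of the unproved observation that $\QQQ_P(y)\neq\emptyset$ iff $y\in\dom\kappa_P$; the paper's proof of the reverse inclusions leans on that observation. (Your justification that points of $\Omega_P$ are atoms with $f_P(\omega_i)>0$ in the countable case carries the same implicit caveat as the paper's own construction, so it is not a gap relative to the reference proof.)
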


\begin{proof}
	\begin{enumerate}
		\item[(a)] 
		Let $y\in \dom \kappa_P$, hence there exists $Q\in\QQQ_P(y)$. 
		As $Q\ll P$, we obtain $\Omega_Q\subseteq\Omega_P$, thus $\conv\Omega_Q\subseteq\conv\Omega_P$. Hence, by \cref{mem:lem:on_ri} (a) and \cref{mem:lem:expected_val_in_ri}, we know that $y=\mathbb{E}_Q\in \ri\Omega_Q^{cc}\subseteq \conv\Omega_Q\subseteq\conv\Omega_P$. 
		Thus,
			${\dom \kappa_P \subseteq \conv\Omega_P}$.
		For the converse inclusion, let ${y\in \conv\Omega_P}$. By Carath\'eodory's theorem \cite{caratheodory1911variabilitatsbereich}, there exist $n\leq d+1$ points $p_1,\dots,p_{n}$ in $\Omega_P$ such that
$			y = \sum_{i=1}^{n}\lambda_ip_i$
		for some $\lambda\in \Delta_{n}$. Consider a distribution $Q\in\PPP(\Omega)$ satisfying 
		$Q(\{p_i\})=\lambda_i$ for all $i=1,\dots,n$. 
		Then, $Q\in\QQQ_P(y)$ 
		by construction. Thus, $y\in \dom \kappa_P$, and we can conclude that $\conv\Omega_P\subseteq\dom \kappa_P$.
		
		\item[(b)] 
		First, let $y\in \dom \kappa_P$, then there exists $Q\in\QQQ_P(y)$. Since $Q\ll P$ which satisfies \cref{mem:asmp:blanket_minimal_and_steep}, it holds that $\dim \Omega_Q^{cc}=\Omega_P^{cc}=d$. 
		Otherwise, the probability measure $Q$ ($Q(\Omega_Q)=1$) is concentrated on a lower dimensional affine subspace in contradiction to the absolute continuity of $Q$ with respect to $P$. Hence, using \cref{mem:lem:expected_val_in_ri} and \cref{mem:lem:on_ri} (b), we obtain that
		$y=\mathbb{E}_Q\in\ri{\Omega_Q^{cc}} = \inte{\Omega_Q^{cc}}\subseteq\inte{\Omega_P^{cc}}$.
		For the  converse inclusion,  by \cref{mem:prop:Cramer_domain}, $y\in \inte\Omega_P^{cc}=\dom{\psi_P^*}=\idom{\psi_P^*}=\dom\nabla \psi_P^*$, and we  conclude that $y=\mathbb{E}_{P_\theta}$ for $\theta = \nabla \psi_P^*(y)$. 
		Since $P_\theta\ll P$ for $P_\theta$ from the exponential family generated by $P$, we find that $P_\theta\in\QQQ_P(y)$ and therefore $y\in\dom\kappa_P$.
	\end{enumerate}
\end{proof}
\noindent Combining \cref{mem:lem:mem_domain} with \cref{mem:prop:Cramer_domain} yields the following corollary.

\begin{corollary}\label{mem:cor:equiv_domain_of_mem_and_cramer}
	Let $P\in\PPP(\Omega)$ be a reference distribution satisfying \cref{mem:asmp:blanket_minimal_and_steep}. Then, 
	\begin{enumerate}
		\item[(a)] If $\Omega_P$ is countable and $\conv\Omega_P$ is closed (i.e., $\conv\Omega_P=\Omega_P^{cc}$), then $\dom \kappa_P=\dom \psi_P^*=\Omega_P^{cc}$. In particular, $\dom \kappa_P=\dom \psi_P^*=\Omega_P^{cc}$ if $\Omega_P$ is finite.
		\item[(b)] If $\Omega_P$ is uncountable, then $\dom \kappa_P = \dom \psi_P^* = \inte \Omega_P^{cc}$.		
	\end{enumerate}		
\end{corollary}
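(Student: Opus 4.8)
The statement is \Cref{mem:cor:equiv_domain_of_mem_and_cramer}, which asserts that under \Cref{mem:asmp:blanket_minimal_and_steep} the domains $\dom\kappa_P$ and $\dom\psi_P^*$ coincide, and identifies them with $\Omega_P^{cc}$ (the countable-and-closed case) or $\inte\Omega_P^{cc}$ (the uncountable case).

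The plan is entirely bookkeeping: both ingredients have already been established, so the proof is a case-by-case comparison of the domain characterizations in \Cref{mem:lem:mem_domain} and \Cref{mem:prop:Cramer_domain}. First, for part (a), I would take $\Omega_P$ countable with $\conv\Omega_P$ closed. By \Cref{mem:lem:mem_domain}(a), $\dom\kappa_P=\conv\Omega_P$. By \Cref{mem:prop:Cramer_domain}, $\inte\Omega_P^{cc}\subseteq\dom\psi_P^*\subseteq\Omega_P^{cc}$, and in the countable case part (b) of that proposition gives the sharper $\conv\Omega_P\subseteq\dom\psi_P^*$. Since $\conv\Omega_P$ is assumed closed, $\conv\Omega_P=\Omega_P^{cc}$ by definition of the convex support, so the outer bound $\dom\psi_P^*\subseteq\Omega_P^{cc}=\conv\Omega_P$ pinches $\dom\psi_P^*$ to exactly $\conv\Omega_P$. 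Hence $\dom\kappa_P=\dom\psi_P^*=\Omega_P^{cc}$. The "in particular" clause then follows because finiteness of $\Omega_P$ forces $\conv\Omega_P$ to be a polytope, hence closed (alternatively cite \Cref{mem:prop:Cramer_domain}(a) and \Cref{mem:lem:mem_domain}(a) directly, which both already give $\Omega_P^{cc}$ in the finite case).

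Next, for part (b), I would take $\Omega_P$ uncountable. Then \Cref{mem:lem:mem_domain}(b) gives $\dom\kappa_P=\inte\Omega_P^{cc}$ directly, and \Cref{mem:prop:Cramer_domain}(c) gives $\dom\psi_P^*=\inte\Omega_P^{cc}$ directly, so the two coincide with no further work. This case is immediate once the two prior results are in hand.

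I do not anticipate a genuine obstacle here: the corollary is designed as a clean amalgamation, and \Cref{mem:asmp:blanket_minimal_and_steep} (minimality) guarantees $\inte\Omega_P^{cc}\neq\emptyset$, so none of the interiors degenerate. The only point requiring a moment's care is the closedness hypothesis in (a): one must notice that \Cref{mem:prop:Cramer_domain} only sandwiches $\dom\psi_P^*$ between $\conv\Omega_P$ and $\Omega_P^{cc}$ in the countable case, and it is precisely the assumption $\conv\Omega_P=\Omega_P^{cc}$ that collapses this sandwich — without it, $\dom\psi_P^*$ could be any convex set wedged between $\conv\Omega_P$ and its closure, and equality with $\dom\kappa_P=\conv\Omega_P$ need not hold. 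So the proof amounts to: invoke \Cref{mem:lem:mem_domain}, invoke \Cref{mem:prop:Cramer_domain}, and in the countable case use the closedness hypothesis to squeeze.
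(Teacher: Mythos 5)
Your proof is correct and matches the paper's approach: the paper states the corollary as an immediate combination of \cref{mem:lem:mem_domain} and \cref{mem:prop:Cramer_domain}, and your case-by-case squeeze (using $\conv\Omega_P=\Omega_P^{cc}$ under the closedness hypothesis to pinch $\dom\psi_P^*$) is exactly the intended argument.
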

\noindent The following lemma will be crucial for proving the equivalence between the MEM function $\kappa_P$ and Cram\'er's rate function $\psi_P^*$. The proof of the lower bound follows similar arguments as in \cite[Theorem 6.17]{brown1986fundamentals} and \cite[Proposition 1]{le1999new} and we include it here for completeness.

\begin{lemma}
	\label{mem:lem:mem_bounds}
	Let $P\in\PPP(\Omega)$ be a reference distribution satisfying \cref{mem:asmp:blanket_minimal_and_steep}. Then:
	\begin{gather*}
	\psi_P^*(y)\leq\kappa_P(y)\leq\psi_P^*(y) + \KL{Q}{P_\theta}-D_{\psi_P^*}\left(y,\nabla \psi_P(\theta)\right),
\end{gather*}
for any $y\in\dom\kappa_P$, $Q\in \QQQ_P(y)$ and $\theta\in\inte\Theta_P$.
\end{lemma}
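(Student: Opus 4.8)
The plan is to reduce both inequalities to a single exact decomposition of the Kullback--Leibler divergence along the exponential family. Fix $\theta\in\Theta_P$ and $Q\in\QQQ_P(y)$. Since $Q\ll P$ and $P,P_\theta$ are mutually absolutely continuous (the density $f_{P_\theta}=\exp(\inner{\cdot}{\theta}-\psi_P(\theta))$ is strictly positive on $\Omega$), we have $Q\ll P_\theta$, so the chain rule for Radon--Nikodym derivatives gives $\log\tfrac{dQ}{dP}=\log\tfrac{dQ}{dP_\theta}+\log\tfrac{dP_\theta}{dP}$ $Q$-a.e., with $\log\tfrac{dP_\theta}{dP}(\cdot)=\inner{\cdot}{\theta}-\psi_P(\theta)$. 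Integrating against $Q$, and using that $\mathbb{E}_Q=y$ exists so that $\inner{\cdot}{\theta}$ is $Q$-integrable, the last integral equals the \emph{finite} number $\inner{y}{\theta}-\psi_P(\theta)$; since both KL divergences lie in $[0,+\infty]$ and additivity of the integral is unproblematic once the $Q$-integrable term is split off, I obtain
\[
	\KL{Q}{P}=\KL{Q}{P_\theta}+\inner{y}{\theta}-\psi_P(\theta)\qquad(\theta\in\Theta_P,\ Q\in\QQQ_P(y)).
\]
This identity (which extends \cref{pre:rmrk:KL_special_cases} from $Q$ in the exponential family to an arbitrary $Q\in\QQQ_P(y)$) is the engine for both bounds.

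For the lower bound, I would use $\KL{Q}{P_\theta}\ge 0$ to get $\KL{Q}{P}\ge\inner{y}{\theta}-\psi_P(\theta)$ for all $\theta\in\Theta_P$ (and trivially for $\theta\notin\Theta_P$, where $\psi_P(\theta)=+\infty$), then take the supremum over $\theta\in\real^d$ to get $\KL{Q}{P}\ge\psi_P^*(y)$, and finally the infimum over $Q\in\QQQ_P(y)$ to conclude $\kappa_P(y)\ge\psi_P^*(y)$. This is essentially the route of \cite[Theorem 6.17]{brown1986fundamentals} and \cite[Proposition 1]{le1999new}.

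For the upper bound, $\kappa_P(y)\le\KL{Q}{P}$ holds by definition of $\kappa_P$, so by the identity it remains to rewrite $\inner{y}{\theta}-\psi_P(\theta)$ through a Bregman distance. Here I invoke $\theta\in\inte\Theta_P=\idom\psi_P$: with $\mu:=\nabla\psi_P(\theta)$, \cref{pre:cor:Legendre} gives $\mu\in\idom\psi_P^*$, $\theta=\nabla\psi_P^*(\mu)$, and the Fenchel equality $\psi_P(\theta)+\psi_P^*(\mu)=\inner{\theta}{\mu}$. Expanding $D_{\psi_P^*}(y,\mu)=\psi_P^*(y)-\psi_P^*(\mu)-\inner{\theta}{y-\mu}$ and substituting $\psi_P^*(\mu)=\inner{\theta}{\mu}-\psi_P(\theta)$ collapses everything to the identity $\inner{y}{\theta}-\psi_P(\theta)=\psi_P^*(y)-D_{\psi_P^*}(y,\nabla\psi_P(\theta))$; the quantity $D_{\psi_P^*}(y,\mu)$ is legitimate because $y\in\dom\kappa_P\subseteq\dom\psi_P^*$, which the already-proved lower bound gives for free (it yields $\psi_P^*(y)\le\kappa_P(y)<\infty$). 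Substituting back into the decomposition yields $\kappa_P(y)\le\KL{Q}{P}=\KL{Q}{P_\theta}+\psi_P^*(y)-D_{\psi_P^*}(y,\nabla\psi_P(\theta))$, as claimed.

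The one genuinely delicate point is making the decomposition identity rigorous rather than merely formal: one must verify the measure-theoretic chain rule in our two standing settings (Lebesgue / counting measure) and confirm that no $\infty-\infty$ indeterminacy occurs — which it does not, precisely because the term $\inner{y}{\theta}-\psi_P(\theta)$ is finite whenever $\theta\in\Theta_P$ and $\mathbb{E}_Q$ exists. Everything downstream of that identity is elementary algebra with the Legendre duality between $\psi_P$ and $\psi_P^*$.
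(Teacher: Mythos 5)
Your proposal is correct and follows essentially the same route as the paper: the exact decomposition $\KL{Q}{P}=\KL{Q}{P_\theta}+\inner{y}{\theta}-\psi_P(\theta)$, nonnegativity of $\KL{Q}{P_\theta}$ for the lower bound, and the Fenchel equality with the Legendre bijection $\theta=\nabla\psi_P^*(\nabla\psi_P(\theta))$ to rewrite $\inner{y}{\theta}-\psi_P(\theta)=\psi_P^*(y)-D_{\psi_P^*}(y,\nabla\psi_P(\theta))$ for the upper bound. The only (harmless) variations are that you state the decomposition for all $\theta\in\Theta_P$, which lets you take the supremum over $\real^d$ directly instead of invoking closedness of $\psi_P$ as the paper does, and that you make explicit the integrability and $y\in\dom\psi_P^*$ points the paper leaves implicit.
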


\begin{proof}	
	For any $\theta\in\inte\Theta_P$ and $Q\in \QQQ_P(y)$ we obtain that $Q\ll P_\theta$ due to the mutual absolute continuity between $P_\theta$ and $P$. Hence, 
	\begin{multline}
		\label{mem:eq:KL_decomposition}
			\KL{Q}{P} \displaystyle= \int_{\Omega} \log\left(\frac{dQ}{dP}\right)dQ = 	\int_{\Omega} \log\left(\frac{dQ}{dP_\theta}\right)dQ+\int_{\Omega} \log\left(\frac{dP_\theta}{dP}\right)dQ\\
			= \KL{Q}{P_\theta}+\int_{\Omega} [\inner{z}{\theta}-\psi_P(\theta)]dQ(z)
			= \KL{Q}{P_\theta}+\inner{y}{\theta}-\psi_P(\theta),
	\end{multline}
	where the last identity uses $y=\mathbb{E}_Q$. Since  \eqref{mem:eq:KL_decomposition} holds for all $\theta\in\inte\Theta_P$ and $\KL{Q}{P_\theta}\geq 0$, 
	\begin{gather}
		\label{mem:eq:lower_bound}
		\KL{Q}{P}\geq \sup\{\inner{y}{\theta}-\psi_P(\theta):\theta\in \inte\Theta_P\}= \psi_P^*(y),
	\end{gather}
	due to the closedness of $\psi_P$, see \cref{pre:prop:exp_fam_convexity}. 
	The lower bound for $\kappa_P$ follows immediately from its definition and the above inequality.\withsmallskip
	
	As for the upper  bound: by \eqref{mem:eq:KL_decomposition} and \eqref{pre:eq:fechel_in_equality_equivs},  for any $Q\in\QQQ_P(y)$ and $\theta\in\inte\Theta_P$, we have
	\begin{gather*}
		\def\arraystretch{1.5}
		\begin{array}{rl}
			\KL{Q}{P} &\displaystyle= \KL{Q}{P_\theta}+\inner{y}{\theta}-\psi_P(\theta)\\
			&\displaystyle= \KL{Q}{P_\theta}+\inner{y-\nabla\psi_P(\theta)}{\theta}+\inner{\nabla\psi_P(\theta)}{\theta}-\psi_P(\theta)\\
			&\displaystyle= \KL{Q}{P_\theta}-\left[
			\psi_P^*(y)-\psi_P^*(\nabla \psi_P(\theta))-\inner{y-\nabla\psi_P(\theta)}{\theta}\right]+\psi_P^*(y)\\
			&\displaystyle= \KL{Q}{P_\theta}-D_{\psi_P^*}\left(y,\nabla \psi_P(\theta)\right)+\psi_P^*(y).
		\end{array}
	\end{gather*}
	Then the result follows due to the fact that $\kappa_P(y)\leq \KL{Q}{P}$ for all $Q\in\QQQ_P(y)$. 
\end{proof}

\begin{theorem}[Equivalence between Cram\'er's rate function and the MEM function]
	\label{mem:thrm:Cramer_and_MEM_equiv}
	Let $P\in\PPP(\Omega)$ satisfy \cref{mem:asmp:blanket_minimal_and_steep}, and  assume that one of the following two conditions holds:
	\begin{enumerate}
		\item[(i)] $\Omega_P$ is uncountable.		
		\item[(ii)] $\Omega_P$ is countable and $\conv\Omega_P$ is closed (as is the case when $\Omega_P$ is finite).
	\end{enumerate}
	Then, $\kappa_P=\psi_P^*$. In particular, $\kappa_P$ is closed, proper, and convex.
\end{theorem}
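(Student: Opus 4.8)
The plan is to establish the identity $\kappa_P = \psi_P^*$ by combining the domain characterizations with the two-sided bound from \cref{mem:lem:mem_bounds}, and then to handle separately the behaviour on the interior of the common domain and (in the countable case) on the relative boundary. The starting point is \cref{mem:cor:equiv_domain_of_mem_and_cramer}: in both cases (i) and (ii) we know $\dom\kappa_P = \dom\psi_P^*$ (equal to $\inte\Omega_P^{cc}$ in the uncountable case, and to $\Omega_P^{cc}=\conv\Omega_P$ in the countable-closed case), so it suffices to prove $\kappa_P(y)=\psi_P^*(y)$ for every $y$ in this set; outside it both functions are $+\infty$.

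First I would treat points $y\in\inte\Omega_P^{cc} = \idom{\psi_P^*} = \dom\nabla\psi_P^*$. For such $y$, \cref{pre:cor:Legendre} and the Legendre-type properties give $\theta := \nabla\psi_P^*(y)\in\inte\Theta_P$ with $\nabla\psi_P(\theta) = y$, so the member $P_\theta$ of the exponential family satisfies $P_\theta\in\QQQ_P(y)$. Plugging $Q=P_\theta$ and this $\theta$ into the upper bound of \cref{mem:lem:mem_bounds} makes the correction term vanish: $\KL{P_\theta}{P_\theta}=0$ and $D_{\psi_P^*}(y,\nabla\psi_P(\theta)) = D_{\psi_P^*}(y,y)=0$. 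Hence $\kappa_P(y)\le\psi_P^*(y)$, and together with the lower bound $\psi_P^*(y)\le\kappa_P(y)$ from the same lemma we get equality on $\inte\Omega_P^{cc}$. This already finishes case (i) entirely, since there $\dom\kappa_P = \dom\psi_P^* = \inte\Omega_P^{cc}$.

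The main obstacle is case (ii), specifically the relative boundary points $y\in\Omega_P^{cc}\setminus\inte\Omega_P^{cc}$ (which can occur when $\Omega_P$ is countable, e.g.\ a geometric distribution where $\dom\psi_P^*$ includes the boundary point $0$ of $\Omega_P^{cc}$). There the recipe above fails because $y\notin\idom{\psi_P^*}$, so no natural-parameter member of the family has mean $y$; one must construct a near-optimal $Q$ by hand. The idea is to approximate: take $y_t := (1-t)y + t\,\mathbb{E}_P\in\inte\Omega_P^{cc}$ for small $t>0$ (using $\mathbb{E}_P\in\ri\Omega_P^{cc}$ from \cref{mem:lem:expected_val_in_ri} together with $\dim\Omega_P^{cc}=d$, and the line-segment principle, \cref{mem:lem:on_ri}), so that $\kappa_P(y_t) = \psi_P^*(y_t)$ by the interior case. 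Letting $t\downarrow 0$, lower semicontinuity of $\psi_P^*$ (it is closed, being a conjugate) gives $\psi_P^*(y)\le\liminf_{t\downarrow0}\psi_P^*(y_t) = \liminf_{t\downarrow0}\kappa_P(y_t)$, which together with the lower bound $\psi_P^*\le\kappa_P$ would pin down $\kappa_P(y)$ only if $\kappa_P$ were also lsc at $y$ — which is what we are trying to prove.

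To break this circularity I would instead use convexity of $\kappa_P$ directly: $\kappa_P$ is convex (being an infimal projection of the jointly convex map $(Q,y)\mapsto\KL{Q}{P}+\delta_{\{\mathbb{E}_Q=y\}}$), finite on $\conv\Omega_P$ by \cref{mem:lem:mem_domain}(a), hence continuous on $\inte\Omega_P^{cc}$, and a finite convex function on a convex set admits an upper bound at a relative boundary point via its values along the segment from an interior point. Concretely, pick any $\bar y\in\inte\Omega_P^{cc}$; for $t\in(0,1]$ write $y_t = (1-t)y + t\bar y$, so $\kappa_P(y_t)\le (1-t)\kappa_P(y) + t\kappa_P(\bar y)$ by convexity while also $\kappa_P(y_t) = \psi_P^*(y_t)$ from the interior case; sending $t\downarrow 0$ on the right and using continuity of $\psi_P^*$ along this segment up to the boundary (which holds for a closed convex function on a segment whose endpoint lies in $\dom\psi_P^* = \Omega_P^{cc}$, by \cite[Corollary 7.5.1]{rockafellar1970convex}) yields $\psi_P^*(y) = \lim_{t\downarrow0}\psi_P^*(y_t) = \lim_{t\downarrow0}\kappa_P(y_t)\le\kappa_P(y)$ only after rearranging — so actually the clean route is: the upper bound $\kappa_P(y)\le\liminf \kappa_P(y_t)$ fails, so instead bound $\kappa_P(y)$ from above by exhibiting an explicit competitor. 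The cleanest such competitor: fix an interior-mean family member $Q_t := P_{\theta_t}$ with mean $y_t$, and a point mass $\delta_{p}$ at some $p\in\Omega_P$ chosen so a convex combination $Q := s\,\delta_p + (1-s)\,Q_t$ has mean exactly $y$; since $\Omega_P$ is countable, $\delta_p\ll P$, so $Q\in\QQQ_P(y)$, and $\KL{Q}{P}$ can be estimated by convexity of $\KL{\cdot}{P}$ plus the fact that $\KL{\delta_p}{P} = -\log f_P(p) < \infty$. Then let $s\downarrow 0$, $t\downarrow 0$ jointly and pass to the limit using continuity of $\psi_P^*$ along segments into $\Omega_P^{cc}$. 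This gives $\kappa_P(y)\le\psi_P^*(y)$ at boundary points, completing case (ii). I expect the delicate bookkeeping in this limiting argument — coordinating $s$ and $t$ so that the mean constraint holds exactly while the KL value converges — to be the technical heart of the proof; everything else is assembled from the lemmas and \cref{pre:cor:Legendre}. The final ``In particular'' clause is immediate: $\kappa_P = \psi_P^*$ and conjugates of proper convex functions are closed, proper, and convex (properness because $\kappa_P(\mathbb{E}_P)=0$).
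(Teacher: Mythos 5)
Your handling of the interior points is exactly the paper's argument: take $\theta=\nabla\psi_P^*(y)$, plug $Q=P_\theta$ into \cref{mem:lem:mem_bounds} so that both correction terms vanish, and invoke \cref{mem:cor:equiv_domain_of_mem_and_cramer}; this settles case (i) and the interior part of case (ii). The genuine gap is your boundary argument. The competitor you propose, $Q=s\,\delta_p+(1-s)P_{\theta_t}$ with $p\in\Omega_P$, $\mathbb{E}_{P_{\theta_t}}=y_t\in\inte\Omega_P^{cc}$ and $\mathbb{E}_Q=y$, cannot exist: the mean constraint reads $y=s\,p+(1-s)y_t$ with $s\in(0,1)$, so $y$ would be a nontrivial convex combination of $p\in\Omega_P^{cc}$ and the interior point $y_t$, and the line segment principle (the same \cite[Lemma 6.28]{beck2014introduction} used in the paper) then forces $y\in\inte\Omega_P^{cc}$, contradicting $y\in\bdr{\Omega_P^{cc}}$. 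The degenerate choices do not rescue this: $s=0$ loses the mean constraint, and $s=1$ only covers atoms $y\in\Omega_P$ and produces the value $\KL{\delta_y}{P}$, not $\psi_P^*(y)$. So there is no ``delicate bookkeeping'' of $s$ and $t$ to carry out — the family of competitors you want to optimize over is empty, and the inequality $\kappa_P(y)\le\psi_P^*(y)$ at boundary points is never established. (Your diagnosis of the other two routes is sound: lower semicontinuity is circular, and plain convexity of $\kappa_P$ gives the inequality in the direction you already have.)

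The paper closes the boundary case by perturbing in the opposite direction: it perturbs the measure sitting at $y$, not a measure whose mean is to be dragged out to $y$. Concretely, take $Q\in\QQQ_P(y)$ with $\KL{Q}{P}<\infty$ (nonempty by the Carath\'eodory construction in \cref{mem:lem:mem_domain}(a)), fix $\hat y\in\inte\Omega_P^{cc}$ with $\hat\theta=\nabla\psi_P^*(\hat y)$, and set $Q_\lambda=\lambda Q+(1-\lambda)P_{\hat\theta}$; its mean $y_\lambda=\lambda y+(1-\lambda)\hat y$ lies in $\inte\Omega_P^{cc}$ because this is the admissible direction of the segment. Combining \cref{mem:lem:mem_bounds} with nonnegativity of the Bregman distance gives the sandwich $\psi_P^*(y)\le\kappa_P(y)\le\psi_P^*(y)+\KL{Q}{Q_\lambda}$, and convexity of $\KL{Q}{\cdot}$ together with $\KL{Q}{P_{\hat\theta}}<\infty$ yields $\KL{Q}{Q_\lambda}\le\lambda\KL{Q}{Q}+(1-\lambda)\KL{Q}{P_{\hat\theta}}\to0$ as $\lambda\to1$, which forces equality. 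Your instinct to approximate through the interior was right, but the object that must be moved toward the interior is $Q$ itself; any mixture carrying positive weight on an interior-mean distribution necessarily has an interior mean, so it can never serve as a competitor at a boundary point.
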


\begin{proof}
	First, let $y\in \inte\Omega_P^{cc}$. By \cref{mem:asmp:blanket_minimal_and_steep}, $\nabla \psi_P$ is a bijection between 
	$\idom{\psi_{P}}$ $=\inte{\Theta_P}$ and $\idom{\psi_{P}^*}=\inte\Omega_P^{cc}$, where the latter uses \cref{mem:prop:Cramer_domain}. Thus, there exists $\theta \in \inte\Theta_P$ such that $y=\nabla \psi_P(\theta)=\mathbb{E}_{P_\theta}$. 
	Applying \cref{mem:lem:mem_bounds} with $Q=P_\theta$ yields
	\begin{gather}
		\label{mem:eq:mem_is_cramer_in_interior}
		\kappa_P(y) = \psi_P^*(y)\qquad (y\in\inte \Omega_P^{cc}).
	\end{gather}
	\noindent Due to \cref{mem:cor:equiv_domain_of_mem_and_cramer}, this establishes the result when $\Omega_P$ is uncountable. To complete the proof, we only need to address the case when $y\in \bdr\Omega_P^{cc}$ under assumption (ii).
	By \cref{mem:cor:equiv_domain_of_mem_and_cramer}, in this case $\dom \kappa_P=\dom \psi_P^*=\Omega_P^{cc}$ and $\QQQ_P(y)\neq\emptyset$ for $y\in\bdr\Omega_P^{cc}$. Consider any $Q\in\QQQ_P(y)$, then, by definition of $\kappa_P$, we have that
	\begin{gather}
		\label{mem:eq:kappa_is_inf}
		\kappa_P(y)\leq \KL{Q}{P}<+\infty.
	\end{gather}
	Choose any $\hat y\in \inte \Omega_P^{cc}$ and set $\hat \theta=\nabla\psi_P^*(\hat y)$ (i.e., $\hat y=\nabla\psi(\hat \theta)$). For any $\lambda\in[0,1)$ consider $Q_\lambda = \lambda Q+(1-\lambda)P_{\hat\theta}$. 
	Then, by  linearity of $Q\mapsto \mathbb E_Q$ \cite[Lemma 2]{rioux2020maximum}, 
	we obtain   
	\begin{gather*}
		y_\lambda:=\mathbb{E}_{Q_\lambda}
		= \lambda \mathbb{E}_Q+(1-\lambda) \mathbb{E}_{P_{\hat\theta}}=\lambda y+(1-\lambda)\hat y.
	\end{gather*}
	By convexity of $\Omega_P^{cc}$ and the line segment principle \cite[Lemma 6.28]{beck2014introduction} we  conclude that ${y_\lambda\in\inte\Omega_P^{cc}}$. Set $\theta_\lambda:=\nabla\psi_P^*(y_\lambda)$ and observe that, by \cref{mem:lem:mem_bounds} and the nonnegativity of the Bregman distance, it holds that
	\begin{gather}
		\label{mem:eq:sandwich}
		\psi_P^*(y)\leq\kappa_P(y)
		\leq \psi_P^*(y) + \KL{Q}{Q_\lambda}.
	\end{gather}
	In addition, due to \eqref{mem:eq:kappa_is_inf} and the fact that  $Q\ll P\ll P_{\hat\theta}$, we  conclude that ${\KL{Q}{P_{\hat\theta}}<\infty}$. Thus, by \eqref{mem:eq:sandwich} and convexity of $\KL{Q}{\cdot}$, we obtain
	\begin{gather*}
		\KL{Q}{Q_\lambda}\leq \lambda\KL{Q}{Q}+(1-\lambda)\KL{Q}{P_{\hat\theta}}\rightarrow 0\quad \text{as}\quad\lambda \rightarrow 1.
	\end{gather*}
\end{proof}

\noindent
We refer to a solution of the optimization problem \eqref{mem:eq:mem} as the \emph{MEM distribution} and denote it as $Q_{\scriptscriptstyle MEM}$. By similar arguments to the ones used in order to establish the lower bound in \cref{mem:lem:mem_bounds}, one can show that, when $y\in \inte(\dom \kappa_P)=\inte (\conv\Omega_P)$,  the MEM distribution is a particular member of the exponential family generated by the reference distribution $P$. More precisely, it holds that $Q_{\scriptscriptstyle MEM}=P_{\theta}$ where $\theta=\nabla\psi_P^*(y)$ and consequently
\begin{gather*}
	f_{Q_{\scriptscriptstyle MEM}}(x) =\frac{dP_\theta}{dP}(x) =  \exp\left(\inner{x}{\theta}-\log\int_{\Omega}\exp(\inner{\cdot}{\theta})dP\right)= \frac{\exp(\inner{x}{\theta})}{\int_{\Omega}\exp(\inner{\cdot}{\theta})dP}.
\end{gather*}
\noindent 
This, again, highlights the intimate connection between the MEM function and exponential families.
The case $y\in\bdr(\dom \kappa_P)$ is more subtle and will be the topic of future research. \withsmallskip

In what follows, we  assume that the reference distribution of the MEM function satisfies the conditions stated in \cref{mem:thrm:Cramer_and_MEM_equiv}, that is:

\begin{assumption}\label{mem:asmp:Cramer_and_MEM_equiv_cond}
	The  distribution $P\in\PPP(\Omega)$ 
	satisfies one of the following conditions:	
	\begin{enumerate}
		\item[(i)] $\Omega_P$ is uncountable.		
		\item[(ii)] $\Omega_P$ is countable and $\conv\Omega_P$ is closed (as is the case when $\Omega_P$ is finite).
	\end{enumerate}
\end{assumption}

Under \cref{mem:asmp:blanket_minimal_and_steep,mem:asmp:Cramer_and_MEM_equiv_cond},
the MEM function and the Cram\'er rate function coincide. 
As an immediate consequence, we obtain that the MEM function $\kappa_P$ is of Legendre type. More importantly, we will see that the alternative representation by means of Cram\'er's rate function is more tractable compared to the original definition given in \eqref{mem:eq:mem}.

\begin{theorem}[Properties of the MEM function]
	\label{mem:thrm:MEM_properties}
	Let $P\in\PPP(\Omega)$ satisfy  \cref{mem:asmp:blanket_minimal_and_steep,mem:asmp:Cramer_and_MEM_equiv_cond}.
	Then the following hold:
	\begin{enumerate}
		\item[(a)] $\kappa_{P}(y)\geq 0$ and equality holds if and only if $y=\mathbb{E}_P$.
		\item[(b)] $\kappa_P$ is of Legendre type.
		\item[(c)] $\kappa_P$ is coercive in the sense that $\lim_{\|y\|\to\infty} \kappa_P(y) =+\infty$ \cite[Definition 11.10]{bauschke2011convex}.
		In particular, $\kappa_P(y)$ is level bounded.
		\item[(d)] If $M_P$ is finite (which holds, in particular, when $\Omega_P$ is bounded), then $\kappa_P$ is supercoercive in the sense that $\lim_{\|y\|\to\infty} \kappa_P(y)/\|y\| =+\infty$ \cite[Definition 11.10]{bauschke2011convex}.
	\end{enumerate}
\end{theorem}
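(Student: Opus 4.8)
The plan is to reduce everything to the conjugate of the log-normalizer by invoking the equivalence $\kappa_P = \psi_P^*$ from \Cref{mem:thrm:Cramer_and_MEM_equiv}, and then to use the two structural facts available under the standing assumptions: that $\psi_P$ is closed and of Legendre type (\Cref{pre:prop:exp_fam_convexity} and \Cref{pre:cor:Legendre}), and that $0 \in \inte\Theta_P = \inte\dom\psi_P$. Part (a) is already the observation recorded right after \eqref{mem:eq:mem}: $\KL{\cdot}{P} \geq 0$ with equality only at $P$ gives $\kappa_P \geq 0$, and if $\kappa_P(y) = 0$ then some $Q$ with $\EE_Q = y$ attains $\KL{Q}{P} = 0$, forcing $Q = P$ and hence $y = \EE_P$ (which exists since $0 \in \inte\Theta_P$, where $\psi_P$ is differentiable with $\nabla\psi_P(0) = \EE_P$); conversely $\kappa_P(\EE_P) \leq \KL{P}{P} = 0$. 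For part (b), since $\psi_P$ is closed and of Legendre type, \cite[Theorem 26.5]{rockafellar1970convex} (recorded in \Cref{sec:pre}) shows its conjugate $\psi_P^* = \kappa_P$ is of Legendre type.

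Parts (c) and (d) both follow from a single lower bound obtained by restricting the supremum defining $\psi_P^*$ to a ball around the origin. For any $r > 0$ with $\{\theta : \|\theta\| \leq r\} \subseteq \inte\Theta_P$, the function $\psi_P$ is finite and convex, hence continuous, on $\inte\Theta_P$, so it is bounded above on that closed ball by some $C_r < \infty$, and therefore
\[
\kappa_P(y) = \psi_P^*(y) \;\geq\; \sup_{\|\theta\| \leq r}\left(\inner{y}{\theta} - \psi_P(\theta)\right) \;\geq\; r\|y\| - C_r \qquad (y \in \real^d).
\]
Since $0 \in \inte\Theta_P$, some such $r>0$ exists, and the estimate immediately gives $\kappa_P(y) \to +\infty$ as $\|y\|\to\infty$ (coercivity) as well as boundedness of each sublevel set $\{y : \kappa_P(y) \leq \alpha\} \subseteq \{y : \|y\| \leq (\alpha + C_r)/r\}$, i.e. level-boundedness. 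For part (d), if $M_P$ is finite everywhere --- which holds when $\Omega_P$ is bounded, since then $\inner{\cdot}{\theta}$ is bounded above on $\Omega_P$ for every $\theta$ --- then $\Theta_P = \dom\psi_P = \real^d$, so the displayed bound is valid for arbitrarily large $r$; dividing by $\|y\|$ and letting $\|y\| \to \infty$ yields $\liminf_{\|y\|\to\infty}\kappa_P(y)/\|y\| \geq r$ for every $r>0$, which is supercoercivity.

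None of these steps is deep; the only points that need care --- not genuine obstacles --- are: in part (a), ensuring $\EE_P$ is well-defined, which is exactly what $0 \in \inte\Theta_P$ (part of \cref{mem:asmp:blanket_minimal_and_steep}) provides via $\nabla\psi_P(0) = \EE_P$; and in parts (c)--(d), choosing $r$ so that the \emph{closed} ball sits inside the \emph{open} set $\inte\Theta_P$, which is what makes $\psi_P$ bounded there (in part (d) this is automatic because $\dom\psi_P = \real^d$). One could alternatively obtain coercivity in (c) from the general characterization that a closed proper convex function is coercive precisely when $0$ lies in the interior of the domain of its conjugate (here $\dom\kappa_P^* = \Theta_P$), but the direct estimate above is more self-contained.
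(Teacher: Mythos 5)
Your parts (a) and (b) follow essentially the paper's route: (b) is exactly the paper's argument (conjugate of the closed Legendre function $\psi_P$ is Legendre, via \cite[Theorem 26.5]{rockafellar1970convex} and \cref{pre:cor:Legendre}), and (a) mirrors the observation the paper makes right after \eqref{mem:eq:mem} and settles in the proof by citing \cite[Proposition 6.2]{brown1986fundamentals}. Where you genuinely diverge is (c) and (d). The paper proves (c) abstractly: by (a) the function $\kappa_P$ has the unique minimizer $\mathbb{E}_P$, and, being closed, proper and convex, it is then coercive by \cite[Proposition 3.1.3]{auslender2006asymptotic}; for (d) it cites \cite[Theorem 11.8(d)]{rockafellar2009variational} ($\psi_P$ finite everywhere iff $\psi_P^*$ supercoercive) and handles the bounded-support case by noting $\dom\kappa_P$ is bounded via \cref{mem:lem:mem_domain}. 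You instead prove the quantitative bound $\kappa_P(y)=\psi_P^*(y)\geq r\|y\|-C_r$ by restricting the conjugate supremum to a closed ball of radius $r$ inside $\inte\Theta_P$ (available since $0\in\inte\Theta_P$, with $\psi_P$ convex and finite, hence bounded, on that ball), and for (d) you observe $\Theta_P=\real^d$ so $r$ can be arbitrary, and that bounded $\Omega_P$ forces $M_P$ finite. This is correct and is essentially a self-contained re-proof of the relevant directions of the cited results; it buys explicit sublevel-set estimates and avoids external references, at the cost of a slightly longer argument than the paper's citations.

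One small touch-up in (a): your "only if" direction asserts that $\kappa_P(y)=0$ implies the infimum in \eqref{mem:eq:mem} is \emph{attained} by some $Q$, which is not automatic from the definition. Since you are already working under the equivalence $\kappa_P=\psi_P^*$, the clean fix is one line: $\psi_P(0)=0$, so $\psi_P^*(y)=0$ is exactly the Fenchel equality $\psi_P(0)+\psi_P^*(y)=\inner{y}{0}$, which by \eqref{pre:eq:fechel_in_equality_equivs} holds iff $y\in\partial\psi_P(0)=\{\nabla\psi_P(0)\}=\{\mathbb{E}_P\}$, using $0\in\inte\Theta_P$. This closes the gap without appealing to attainment (the paper sidesteps the same issue by citing Brown).
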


\begin{proof}
	Part (a) is evident from the definition of $\kappa_P$ as given in \eqref{mem:eq:mem} and \cite[Proposition 6.2]{brown1986fundamentals}. Part (b) follows directly from the equivalence to the Cram\'er rate function $\psi_P^*$ and \cref{pre:cor:Legendre}. To see (c), observe that (a) implies that $\kappa_P$ admits a unique minimizer $\mathbb{E}_P$ which combined with the fact that $\kappa_P$ is closed, proper and convex (since $\kappa_P$ is of Legendre type due to (b)) establishes the result by \cite[Proposition 3.1.3]{auslender2006asymptotic}. Lastly, if the moment generating function is finite, then so is $\psi_P$, and the supercoercivity of $\kappa_P=\psi_{P}^*$ follows from \cite[Theorem 11.8(d)]{rockafellar2009variational}.\footnote{The definition of supercoercive convex functions we use here follows \cite[Definition 11.10]{bauschke2011convex}. In \cite{rockafellar2009variational} the authors refer to such functions as coercive (see \cite[Definition 3.25]{rockafellar2009variational}).} If $\Omega_P$ is bounded then $\dom{\kappa_P}$ is bounded due to \cref{mem:lem:mem_domain}. In this case, $\kappa_P=\psi_P^*$ is trivially supercoercive and the claim that $\psi_P$ is finite follows from \cite[Theorem 11.8(d)]{rockafellar2009variational}.
\end{proof}

\noindent The results presented in the remainder of this work are established under \cref{mem:asmp:blanket_minimal_and_steep,mem:asmp:Cramer_and_MEM_equiv_cond} which, in particular, ensure the equivalence between the MEM and Cram\'er rate functions. 
For this reason, we take this opportunity to standardize our nomenclature: between the two options ($\kappa_P$ or $\psi_P^*$) we will opt for the one that corresponds to the Cram\'er rate function $\psi_P^*$. This choice is motivated by our intent to emphasize the more computationally appealing definition and the connection to the log-normalizer function $\psi_P$. Nevertheless, in the definition of some new concepts defined by means of Cram\'er's rate function, we will adopt the MEM terminology in order to emphasize the motivation in the context of estimation.\withsmallskip

If the reference distribution belongs to an exponential family generated by some measure $P\in\MMM(\Omega)$, i.e., if for some $\hat\theta\in\Theta_P$ we consider a new exponential family generated by the probability measure $P_{\hat\theta}$,\footnote{Recall from the definition of $\FFF_P$ that 
$P_{\hat\theta}$ is the probability measure with 
	$\frac{dP_{\hat\theta}}{dP}(y) = \exp(\inner{y}{\hat\theta}-\psi_P(\hat\theta))$.
} then the corresponding moment-generating function takes the form 
\begin{gather}
	\label{mem:eq:moment_generating_of_exp_fam}
	M_{P_{\hat \theta}}[\theta]=\exp\left(\psi_P(\hat\theta+\theta)-\psi_P(\hat\theta)\right).
\end{gather} 
\noindent In this case, the Cram\'er rate functions that corresponds to $P_{\hat\theta}$ and $P$ share a useful relation summarized in the following lemma. 

\begin{lemma}
	\label{mem:lem:relative_kramer}
	Let $\FFF_P$ be a minimal and steep exponential family generated by ${P\in\MMM(\Omega)}$ and assume further that, for any $\theta\in\inte\Theta_P$, \cref{mem:asmp:Cramer_and_MEM_equiv_cond} holds  for $P_\theta\in\PPP(\Omega)$. 
	Then, for any $\hat{\theta}\in\inte\Theta_P$ and $y\in\dom{\psi_P^*}$,  we have
	$\psi^*_{P_{\hat{\theta}}}(y) = D_{\psi_P^*}(y,\hat{y})$
	where $\hat{y}:=\nabla \psi_P(\hat{\theta})\in \inte\Omega_P^{cc}$.
\end{lemma}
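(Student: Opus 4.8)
The plan is to reduce the statement to the translation identity for log-normalizers and then recognise the resulting expression as a Bregman distance of $\psi_P^*$. First I would pass from the moment generating function to the log-normalizer: taking logarithms in \eqref{mem:eq:moment_generating_of_exp_fam} gives
\begin{gather*}
	\psi_{P_{\hat\theta}}(\theta)=\psi_P(\hat\theta+\theta)-\psi_P(\hat\theta)\qquad\left(\theta\in\Theta_{P_{\hat\theta}}=\Theta_P-\hat\theta\right),
\end{gather*}
with both sides equal to $+\infty$ for $\theta\notin\Theta_P-\hat\theta$. By \cref{pre:cor:Legendre}, $\psi_P$ is of Legendre type, so $\nabla\psi_P$ is a bijection of $\inte\Theta_P$ onto $\idom{\psi_P^*}$ with inverse $\nabla\psi_P^*$; since $\hat\theta\in\inte\Theta_P$, this yields $\hat y=\nabla\psi_P(\hat\theta)\in\idom{\psi_P^*}$ and $\hat\theta=\nabla\psi_P^*(\hat y)$, so that $D_{\psi_P^*}(\cdot,\hat y)$ is well defined on $\dom{\psi_P^*}$. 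By differentiability of $\psi_P$ on $\inte\Theta_P$ we also have $\hat y=\mathbb{E}_{P_{\hat\theta}}$, whence $\hat y\in\ri{\Omega_{P_{\hat\theta}}^{cc}}=\ri{\Omega_P^{cc}}=\inte{\Omega_P^{cc}}$, using \cref{mem:lem:expected_val_in_ri}, the mutual absolute continuity of $P_{\hat\theta}$ and $P$, and \cref{mem:lem:on_ri} (b) (minimality of $\FFF_P$ gives $\dim\Omega_P^{cc}=d$); this is the asserted location of $\hat y$.

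Next I would compute the conjugate directly. Using the translation identity together with the change of variables $\eta=\hat\theta+\theta$, which bijects $\Theta_{P_{\hat\theta}}$ onto $\Theta_P$ (so that both suprema may be taken over all of $\real^d$, the excluded parameters contributing $-\infty$), one gets
\begin{gather*}
	\psi^*_{P_{\hat\theta}}(y)=\sup_{\theta}\left\{\inner{y}{\theta}-\psi_P(\hat\theta+\theta)\right\}+\psi_P(\hat\theta)=\sup_{\eta}\left\{\inner{y}{\eta}-\psi_P(\eta)\right\}-\inner{y}{\hat\theta}+\psi_P(\hat\theta)=\psi_P^*(y)-\inner{y}{\hat\theta}+\psi_P(\hat\theta).
\end{gather*}
Then I would invoke the Fenchel--Young equality \eqref{pre:eq:fechel_in_equality_equivs} for the Legendre function $\psi_P$ at $\hat\theta$, that is $\psi_P(\hat\theta)=\inner{\hat y}{\hat\theta}-\psi_P^*(\hat y)$ with $\hat y=\nabla\psi_P(\hat\theta)$, and substitute together with $\hat\theta=\nabla\psi_P^*(\hat y)$:
\begin{gather*}
	\psi^*_{P_{\hat\theta}}(y)=\psi_P^*(y)-\psi_P^*(\hat y)-\inner{\nabla\psi_P^*(\hat y)}{y-\hat y}=D_{\psi_P^*}(y,\hat y)\qquad(y\in\dom{\psi_P^*}),
\end{gather*}
which is the assertion.

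There is no substantive obstacle here; the only points that need care are bookkeeping ones — tracking the natural parameter spaces through the translation so that the change of variables inside the supremum is legitimate, and checking that the translated measure is still ``nice enough'', i.e.\ that $\psi_P$ is of Legendre type and $\hat y$ lands in $\idom{\psi_P^*}$ (hence in $\inte\Omega_P^{cc}$), which is precisely what makes the Bregman distance and the Fenchel--Young equality available. I would also remark that \cref{mem:asmp:Cramer_and_MEM_equiv_cond} for the $P_\theta$ does not appear to enter this purely conjugate-level identity; it is retained because it guarantees that $\psi^*_{P_{\hat\theta}}$ is genuinely the MEM function $\kappa_{P_{\hat\theta}}$, which is the setting in which the lemma will be applied.
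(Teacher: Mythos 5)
Your proposal is correct and follows essentially the same route as the paper's proof: the translation identity from \eqref{mem:eq:moment_generating_of_exp_fam}, a change of variables in the conjugate supremum giving $\psi^*_{P_{\hat\theta}}(y)=\psi_P^*(y)+\psi_P(\hat\theta)-\inner{y}{\hat\theta}$, and then the Fenchel--Young equality \eqref{pre:eq:fechel_in_equality_equivs} with $\hat\theta\in\idom{\psi_P}$ to recognise the Bregman distance $D_{\psi_P^*}(y,\hat y)$. The extra bookkeeping you supply (the location of $\hat y$ in $\inte\Omega_P^{cc}$ and the observation that \cref{mem:asmp:Cramer_and_MEM_equiv_cond} is not needed for the conjugate-level identity) is consistent with, and slightly more explicit than, the paper's argument.
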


\begin{proof}
	For $y\in \dom \psi_P^*$, we have
		\begin{gather*}
			\def\arraystretch{2}
			\begin{array}{rl}
				\psi^*_{P_{\hat{\theta}}}(y) &\overset{\eqref{mem:eq:log_Laplace}}{=}\sup\left\{\inner{y}{\theta}-\log\left(M_{P_{\hat\theta}}[\theta]\right):\theta\in \real^d\right\}\\
				&\overset{\eqref{mem:eq:moment_generating_of_exp_fam}}{=}\sup\left\{ \inner{y}{\theta}-[\psi_P(\hat{\theta}+\theta)-\psi_P(\hat{\theta})]:\theta\in \real^d \right\}\\
				&=\psi^*_P(y)+\psi_P(\hat{\theta})-\inner{y}{\hat{\theta}}.
			\end{array}
		\end{gather*}
	The result follows from the definition of the Bregman distance, \eqref{pre:eq:fechel_in_equality_equivs} and  $\hat{\theta}\in\idom{\psi_P}$.
\end{proof}

\noindent We list in \cref{mem:tbl:cramer}  below a number of examples of Cram\'er rate functions that correspond to most of the popular distributions (i.e. choices of the reference distribution $P\in\PPP(\Omega)$). Some of the functions admit a closed form expression while others are given implicitly.\footnote{One can evaluate Cram\'er's rate function value at a point of interest by solving a nonlinear system.} The derivations and further details are included in \cref{apndx:sec:Cramer}. 
Observe that all cases considered below satisfy \cref{mem:asmp:blanket_minimal_and_steep,mem:asmp:Cramer_and_MEM_equiv_cond} which guarantees the equivalence established in \cref{mem:thrm:Cramer_and_MEM_equiv}:  indeed, with some exceptions, all the distributions in \cref{mem:tbl:cramer} are minimal with a natural parameter space $\Theta_P$ open which implies steepness. These  exceptions are: the multinomial distribution which is minimal under an appropriate reformulation and the multivariate normal-inverse Gaussian which is steep (see \cref{apndx:sec:Cramer}
). Here, we provide the Cram\'er rate function of the multinomial distribution in minimal form. Thus, \cref{mem:asmp:blanket_minimal_and_steep} holds for all the distributions given in \cref{mem:tbl:cramer}. This comprehensive list complements and extends some previously established formulas  \cite{le1999new, wainwright2008graphical}. \withsmallskip

Many computations are facilitated in the presence of separability as described in the following remark.

\begin{remark}[Separability of $\psi_P^*$]
\label{mem:rmrk:Cramer_seperability}
In most examples, the reference distribution $P\in\PPP(\Omega)$ admits a separable structure of the form $P(y)=P_1(y_1)P_2(y_2)\cdots P_d(y_d)$  where $P_i\in\PPP(\Omega_i)$,  $\Omega_i\subset\real$,  
i.e., each component corresponds to an i.i.d. random variable. 
In this case, since $\mathbb{M}_{P}[\theta]=\prod_{i=1}^{d}\mathbb{M}_{P_i}[\theta_i]$ \cite[Section 4.4]{rohatgi2015introduction}, 
 we  have 
\[
	\psi^*_P(y) 
	= \sup\left\{\inner{y}{\theta}-\log\left(\mathbb{M}_{P }[\theta]\right):\theta\in\real^d\right\}=\sum_{i=1}^d\sup\left\{y_i\theta_i-\log\left(\mathbb{M}_{P_i}[\theta_i]\right):\theta_i\in\real\right\}.
\]
Hence, in most of our examples below we will consider only the case $d=1$.\rqed
\end{remark}
 In \cref{mem:tbl:cramer} we employ the convention that $0\log(0)=0$ and define 
 \[
\Delta_{(d)}:=\left\{y\in\real^d_+:\sum_{i=1}^d y_i\leq 1\right\} \quad\text{and}\quad I(p):=\{y\in\real^d:y_i=0~(p_i=0)\}\quad (p\in  \real^d).
 \]

{ 	 
\small	
\centering
	\def\arraystretch{2}
	\begin{longtable}{@{}lcc@{}}		
		\toprule
		\multicolumn{1}{c}{Reference Distribution ($P$)}                                                                                                                           & Cram\'er Rate Function ($\psi_P^*(y)$)                                                                                                                                                                                                                                                                                                                               & $\dom \psi_P^*$                                                                                                                      \\ \midrule
		\endfirsthead
		\multicolumn{3}{@{}l}{\ldots continued}\\\toprule
				\multicolumn{1}{c}{Reference Distribution ($P$)}                                                                                                                           & Cram\'er Rate Function ($\psi_P^*(y)$)                                                                                                                                                                                                                                                                                                                                & $\dom \psi_P^*$                                                                                                                    \\ \midrule
		\endhead 
		\midrule
		\multicolumn{3}{r@{}}{continued \ldots}
		\endfoot
		\endlastfoot%
		{\color[HTML]{333333} \def\arraystretch{1}\begin{tabular}[c]{@{}l@{}}Multivariate Normal\\ $(\mu\in\real^d, \Sigma\in\bbS^d:\Sigma\succ 0)$\end{tabular}}                                & {\color[HTML]{333333} $\frac{1}{2}(y-\mu)^T\Sigma^{-1}(y-\mu)$}                                                                                                                                                                                                                                                                                                & $\real^d$                                                                                                                            \medskip\\
		\def\arraystretch{1}\begin{tabular}[c]{@{}l@{}}Multivar. Normal-inverse Gaussian \\ 
		 $\big(\mu,\beta\in\real^d,~\alpha,\delta\in\real,\Sigma\in\real^{d\times d}$:\\ 
		$\delta>0,~\Sigma\succ0, \alpha\geq\sqrt{\beta^T\Sigma\beta}\big)$\\
		$\gamma:=\sqrt{\alpha^2-\beta^T\Sigma\beta}$\end{tabular} 
		
		& $\alpha\sqrt{\delta^2+(y-\mu)^T\Sigma^{-1}(y-\mu)}-\beta^T(y-\mu)-\delta\gamma$                                                                                                                                                                                                                                                                                                    & $\real^d$                                                                                                                              \medskip\\
		Gamma $(\alpha,\beta\in\real_{++})$                                                                                                                                 & $\beta y-\alpha+\alpha\log\left(\frac{\alpha}{\beta y}\right)$                                                                                                                                                                                                                                                                                                                                  & $\real_{++}$                                                                                                                         \medskip\\
		Laplace $(\mu\in\real,b\in\real_{++})$                                                                                                                                       & \def\arraystretch{1}\begin{tabular}[t]{@{}c@{}}$\begin{cases} 0, & y=\mu, \\ \sqrt{1+\rho(y)^2}-1+\log\left(\frac{\sqrt{1+\rho(y)^2}-1}{\rho(y)^{2}/2}\right), & y\neq \mu, \end{cases}$\withsmallskip \\\withsmallskip $\left(\rho(y):=(y-\mu)/b\right)$\end{tabular}                                                                                                                                                                                                                      & $\real$                                                                                                                              \medskip\\
		Poisson $(\lambda\in\real_{++})$                                                                                                                                     & $y\log(y/\lambda)- y+ \lambda$                                                                                                                                                                                                                                                                                                                                 & $\real_{+}$                                                                                                                          \medskip\\
		\def\arraystretch{1}\begin{tabular}[c]{@{}l@{}}Multinomial $(n\in\nn, p\in\Delta_{(d)}$: \\
		$\sum_{i=1}^d p_i<1)$
		\end{tabular}

		& $\sum_{i=1}^{d} y_i\log\left(\frac{y_i}{np_i}\right)+ \left(n - \sum_{i=1}^d y_i\right)\log\left(\frac{n - \sum_{i=1}^d y_i}{n(1 - \sum_{i=1}^d p_i)}\right)$                                                                                                                                                                                                                                                                                                            & $n\Delta_{(d)}\cap I(p)$ \medskip\\
		\def\arraystretch{1}\begin{tabular}[c]{@{}l@{}}Negative Multinomial $(p\in[0,1)^d,$\\
			$y_0\in\real_{++},~p_0:=1-\sum_{i=1}^d p_i>0)$	\end{tabular}
		 &       $\sum_{i=0}^dy_i\log\left(\frac{y_i}{p_i\bar{y}}\right)$\quad$(\bar{y}:=\sum_{i=0}^dy_i)$                                                                                                                                                                                                                                                                                                                                                        &  $\real_{+}^d\cap I(p)$                                                                                                                                      \medskip\\
		\def\arraystretch{1}\begin{tabular}[c]{@{}l@{}}Discrete Uniform\\ $\big(a,b\in\intg: a\leq b, $ \\
			$~~\mu:=(a+b)/2,n:=b-a+1\big)$\\ \end{tabular}                                                                         & \def\arraystretch{1}\begin{tabular}[t]{@{}l@{}}$\begin{cases} 		0, & y=\mu,\\ 		(y-\mu)\theta-\log\left(\frac{e^{(b-\mu+1)\theta}-e^{(a-\mu)\theta}}{n(e^{\theta}-1)}\right), & y\neq\mu, 	\end{cases}$\medskip\\ where $\theta\in\real:~  y+\frac{e^{\theta}}{e^{\theta}-1}=\frac{(b+1)e^{(b+1)\theta}-ae^{a\theta}}{e^{(b+1)\theta}-e^{a\theta}}$ 
		\end{tabular} & $[a,b]$                                                                                      
	\medskip\\
	\def\arraystretch{1}\begin{tabular}[c]{@{}l@{}}Continuous Uniform\\ $\big(a,b\in\real:a<b, \mu:=(a+b)/2\big)$\\ \end{tabular}                                                                         & \def\arraystretch{1}\begin{tabular}[t]{@{}l@{}}$\begin{cases} 		0, & y=\mu,\\ 		(y-\mu)\theta-\log\left(\frac{e^{(b-\mu)\theta}-e^{(a-\mu)\theta}}{(b-a)\theta}\right), & y\neq\mu, 	\end{cases}$\medskip\\ where $\theta\in\real:~  y+\frac{1}{\theta}=\frac{be^{b\theta}-ae^{a\theta}}{e^{b\theta}-e^{a\theta}}$ 
	\end{tabular} & $(a,b)$                                                                                                                                                                     \medskip\\
		Logistic 
		$(\mu\in\real,s\in\real_{++})$                                                                                                                                     & \def\arraystretch{1}\begin{tabular}[t]{@{}l@{}}$\begin{cases} 		0, & y=\mu,\\ 		(y-\mu)\theta-\log\left(B(1-s\theta,1+s\theta)\right), & y\neq \mu,\\ 	\end{cases}$ \medskip\\ where $\theta\in\real_+:~y-\mu =\frac{1}{\theta} + \frac{\pi s}{\tan{(-\pi s\theta)}}$ 
		\end{tabular}                        &      $\real$                                                                                                                                \\ \bottomrule
\caption{Cram\'er rate functions for popular distributions.}
	\vspace{-2mm}
\label{mem:tbl:cramer}
	\end{longtable}}

\begin{remark}[On \cref{mem:tbl:cramer}] 
\label{mem:rmrk:special_cases_and_interpretation}
We provide some additional comments on \cref{mem:tbl:cramer} here.
	\begin{itemize}
		\item[(a)] (Special cases) 
		\begin{itemize}
			\item As special cases of the Gamma distribution we obtain Chi-squared with parameter $k$ ($\alpha=k/2$, $\beta=1/2$), Erlang ($\alpha$ positive integer), and exponential ($\alpha=1$) distributions.
			\item As special cases of the multinomial distribution, we obtain binomial ($d=1$, $n>1$), Bernoulli ($d=1$, $n=1$), and categorical ($d>1$, $n=1$) distributions.
			\item  As special cases of the negative multinomial distribution we obtain the negative binomial ($d=1$) and (shifted) geometric ($d=1$, $y_0=1$) distributions.			
		\end{itemize}
		\item[(b)] (Statistical interpretation) For many reference distributions, 
		$\psi^*_P$ recovers well-known functions from information theory and related areas. Here, the MEM provides an information-driven, statistical interpretation for these functions. Examples include the squared Mahalanobis distance (multivariate normal), pseudo-Huber loss (multivariate normal-inverse Gaussian), Itakura-Saito distance (Gamma), Burg entropy (exponential), Fermi-Dirac entropy (Bernoulli), and the generalized cross entropy (Poisson). 
	\end{itemize}\rqed
\end{remark}

\section{The MEM Estimator and Models for Inverse Problems}
\label{sec:model}

In this section, we show how the MEM function can be used in various modeling paradigms. We start by presenting the MEM estimator and exploring some of its properties. We then discuss its (primal and dual) analogy to the maximum likelihood (ML) estimator. Finally, we will illustrate its efficacy by considering a class of linear models involving a regularization term. 

\subsection{The Maximum Entropy on the Mean Estimator}

\noindent The maximum entropy on the mean (MEM) function gives rise to an information-driven criterion for measuring the compliance of given data with a prior distribution. Based on this function, we can define the MEM estimator as given in \cref{model:def:MEM_estimator} below. First, we introduce some additional terminology and notation that will be used in the sequel. Let $\Omega\subseteq\real^d$ and let  
$F_\Lambda=\{P_\lambda:\lambda\in\Lambda\subseteq\real^d\}\subset\PPP(\Omega)$ 
be a parameterized family of distributions indexed by $\lambda\in\Lambda$ such that $\mathbb{E}_{P_{\lambda_1}}=\mathbb{E}_{P_{\lambda_2}}$ if and only if $\lambda_1=\lambda_2$. We call $F_\Lambda$ as the \emph{reference family} and say that it satisfies \cref{mem:asmp:blanket_minimal_and_steep,mem:asmp:Cramer_and_MEM_equiv_cond} if they hold for each  $P_\lambda\in F_\Lambda$. When  $F_\Lambda$ is an exponential family (in this case $\Lambda$ is the natural parameter space $\Theta_P$ for some $P\in\MMM(\Omega)$) the MEM estimator was studied in \cite[Chapter 6]{brown1986fundamentals}. We stress that, in our presentation, $F_\Lambda$ need {\em not} be an exponential family.

\begin{definition}[MEM estimator]
	\label{model:def:MEM_estimator}
	Let $F_\Lambda\subset\PPP(\Omega)$ be a reference family satisfying \cref{mem:asmp:blanket_minimal_and_steep,mem:asmp:Cramer_and_MEM_equiv_cond} and assume  that $\mathbb{E}_{P_{\lambda_1}}=\mathbb{E}_{P_{\lambda_2}}$ if and only if $\lambda_1=\lambda_2$. 
	For an observation $\hat y\in\real^d$, let 
	$P_{\hat{\lambda}}\in F_\Lambda$ be such that $\hat y=\mathbb{E}_{P_{\hat\lambda}}$,  and let $S^*\subseteq\real^d$ be (nonempty) closed. The MEM estimator 
	is defined as
	\begin{gather*}
		y_{\scriptscriptstyle MEM}(\hat{y},F_\Lambda,S^*):=\argmin\{\psi^*_{P_{\hat\lambda}}(y):y\in S^*\}.
	\end{gather*}
\end{definition}
\noindent
In order to simplify notation, in what follows, we will write $y_{\scriptscriptstyle MEM}:=y_{\scriptscriptstyle MEM}(\hat{y},F_\Lambda,S^*)$ when the dependence on the triple $(\hat{y},F_\Lambda,S^*)$ is clear from the context.

\begin{remark}[The observation vector and its domain] 
	\label{model:rmrk:observations_domain}
	In \cref{model:def:MEM_estimator}, the condition that 
	$P_{\hat{\lambda}}\in F_\Lambda$ is chosen such that $\hat y=\mathbb{E}_{P_{\hat\lambda}}$ implies that the reference distribution is indexed by the observation vector $\hat y$. This condition combined with \cref{mem:asmp:blanket_minimal_and_steep}
	 entails that $\hat y\in \inte{\Omega_{P_{\hat\lambda}}^{cc}}$ must hold due to \cref{mem:lem:expected_val_in_ri}. \rqed
\end{remark}
\noindent In order to establish the well-definedness of the MEM estimator, we will use the following extension of \cite[Lemma 5.4]{brown1986fundamentals}. 

\begin{lemma}
	\label{model:lem:when_mem_est_in_inetrior}
	Let $\phi:\real^d\rightarrow\erl$ be closed and  Legendre-type, let  ${\varphi:\real^d\rightarrow\erl}$ be proper, closed and convex such that $\idom \phi\cap\dom \varphi\neq\emptyset$.  Assume that one of the functions is coercive while the other is bounded from below.
	Then there exists a unique solution $y^*\in\real^d$ to $\min\{\phi(y)+\varphi(y):y\in\real^d\}$,
	which also satisfies $y^*\in\idom \phi\cap\dom \varphi$.
\end{lemma}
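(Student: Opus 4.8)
The plan is to prove existence and uniqueness of the minimizer, and then to prove that the minimizer lies in $\idom\phi\cap\dom\varphi$, which is really the crux of the statement. For existence, observe that $\phi+\varphi$ is proper (its domain contains the nonempty set $\idom\phi\cap\dom\varphi$), closed (a sum of closed functions, both bounded below, so no $\infty-\infty$ issues), and convex. The coercivity hypothesis — one of $\phi,\varphi$ is coercive (level bounded) and the other bounded below — makes $\phi+\varphi$ coercive, hence level bounded, so by the standard existence result for closed proper level-bounded functions (e.g.\ Weierstrass, \cite[Theorem 2.14]{beck2017first} or \cite[Theorem 1.9]{rockafellar2009variational}) a minimizer $y^*$ exists. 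Uniqueness follows because $\phi$ is strictly convex on $\idom\phi$ (Legendre type) and $\varphi$ is convex, so $\phi+\varphi$ is strictly convex on $\idom\phi\cap\dom\varphi$, which contains the minimizer(s); two distinct minimizers in that set would contradict strict convexity via the midpoint.

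\textbf{The main step: $y^*\in\idom\phi$.} This is the delicate part and it is where the steepness (essential smoothness) of $\phi$ enters. Suppose for contradiction that $y^*\notin\idom\phi$. Since $y^*\in\dom(\phi+\varphi)\subseteq\dom\phi$, we must have $y^*\in\bdr(\dom\phi)$, so $y^*\in\dom\phi\setminus\idom\phi$. Pick any point $\bar y\in\idom\phi\cap\dom\varphi$ (nonempty by hypothesis) and consider the segment $y_t=(1-t)y^*+t\bar y$ for $t\in(0,1]$. By the line segment principle \cite[Lemma 6.28]{beck2014introduction}, $y_t\in\idom\phi$ for $t\in(0,1]$, and since $\dom\varphi$ is convex and contains both endpoints, $y_t\in\dom\varphi$ as well; hence $y_t\in\idom\phi\cap\dom\varphi$. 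I would then examine the one-sided derivative at $t=0^+$ of $g(t):=\phi(y_t)+\varphi(y_t)$. The function $t\mapsto\varphi(y_t)$ is convex and finite on $[0,1]$, hence has a finite right derivative at $0$. The function $t\mapsto\phi(y_t)$ is convex and, because $\phi$ is essentially smooth and $y^*$ is on the boundary of $\dom\phi$, a short argument shows its right derivative at $t=0^+$ is $-\infty$: indeed, $\frac{d}{dt}\phi(y_t)=\langle\nabla\phi(y_t),\bar y-y^*\rangle$ for $t\in(0,1)$, and by steepness $\|\nabla\phi(y_t)\|\to\infty$ as $t\downarrow 0$; combined with convexity of $\phi$ (so that $\langle\nabla\phi(y_t),\bar y-y^*\rangle$ is nondecreasing in $t$ — wait, need the monotonicity of the directional derivative along the segment) one deduces $\langle\nabla\phi(y_t),\bar y-y^*\rangle\to-\infty$. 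This is exactly the classical fact underlying \cite[Theorem 26.1]{rockafellar1970convex}: an essentially smooth convex function has no subgradients on the boundary of its domain, so the directional derivative into the interior is $-\infty$. Consequently $g'(0^+)=-\infty$, so $g(y_t)<g(y^*)$ for small $t>0$, contradicting the minimality of $y^*$. Therefore $y^*\in\idom\phi$. Finally $y^*\in\dom\varphi$ because $\varphi(y^*)\le g(y^*)<+\infty$ (using $\phi(y^*)$ finite, which holds since $y^*\in\idom\phi$, or simply since $y^*\in\dom(\phi+\varphi)$).

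\textbf{Alternative/cleaner route for the interiority.} Rather than the explicit directional-derivative computation, I might instead invoke optimality directly: since $y^*$ minimizes the closed proper convex function $\phi+\varphi$, Fermat's rule gives $0\in\partial(\phi+\varphi)(y^*)$. If $\idom\phi\cap\dom\varphi\neq\emptyset$ one has the sum rule $\partial(\phi+\varphi)(y^*)=\partial\phi(y^*)+\partial\varphi(y^*)$ (\cite[Theorem 3.36]{beck2017first} or Moreau–Rockafellar), so $\partial\phi(y^*)\neq\emptyset$. But for $\phi$ of Legendre type, $\partial\phi(y^*)=\nabla\phi(y^*)$ is nonempty precisely when $y^*\in\idom\phi$ (\cite[Theorem 26.1]{rockafellar1970convex}: essential smoothness forces $\partial\phi=\emptyset$ off $\idom\phi$). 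Hence $y^*\in\idom\phi$, and then $y^*\in\dom\varphi$ as above. I expect the main obstacle to be making precise the claim that $\partial\phi(y^*)=\emptyset$ for $y^*\in\dom\phi\setminus\idom\phi$ and correctly citing the form of the subdifferential sum rule that applies here; both are standard but need the hypothesis $\idom\phi\cap\dom\varphi\neq\emptyset$ to be used honestly. I would write up the subdifferential route as the main argument, since it is the shortest and most robust, and it cleanly isolates where Legendre type (via essential smoothness) is used.
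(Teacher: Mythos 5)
Your main argument — existence and uniqueness from coercivity plus strict convexity, then Fermat's rule with the subdifferential sum rule (using $\idom\phi\cap\dom\varphi\neq\emptyset$ as the qualification) to get $\partial\phi(y^*)\neq\emptyset$, and essential smoothness via \cite[Theorem 26.1]{rockafellar1970convex} to conclude $y^*\in\idom\phi$ — is exactly the paper's proof, which cites \cite[Corollary 11.15]{bauschke2011convex} for existence/uniqueness and \cite[Theorem 16.2, Corollary 16.38]{bauschke2011convex} for the optimality and sum-rule steps. Your first, hand-rolled directional-derivative sketch has a small unfilled gap (gradient-norm blowup alone does not immediately force $\langle\nabla\phi(y_t),\bar y-y^*\rangle\to-\infty$), but since you correctly defer to the subdifferential route as the written argument, the proposal is sound and matches the paper.
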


\begin{proof}
	The existence and uniqueness of the solution follow from \cite[Corollary 11.15]{bauschke2011convex}. It remains to show that $y^*\in\idom\phi\cap\dom\varphi$. Evidently, $y^*\in\dom\phi\cap\dom\varphi$ thus it is sufficient to show that $y^*\in\idom\phi$. Using \cite[Theorem 16.2]{bauschke2011convex} and \cite[Corollary 16.38]{bauschke2011convex}
	we have $0\in \partial \phi(y^*)+\partial \varphi(y^*)$,  in particular $\partial \phi(y^*)\neq\emptyset$. Since $\phi$ is of Legendre type we  conclude that $y^*\in\idom{\phi}$ \cite[Theorem 26.1]{rockafellar1970convex}. 
\end{proof}


\begin{theorem}[Well-definedness of the MEM estimator]
	\label{model:eq:MEM_estimator_well_def}	
	Let $F_\Lambda\subset\PPP(\Omega)$ be a reference family satisfying \cref{mem:asmp:blanket_minimal_and_steep,mem:asmp:Cramer_and_MEM_equiv_cond}.
	For $\hat y\in\real^d$, let $P_{\hat\lambda}\in F_\Lambda$ such that $\hat y=E_{P_{\hat\lambda}}$, and let $S^*\subseteq\real^d$ be  closed  with  $S^*\cap \dom \psi^*_{P_{\hat{\lambda}}}\neq \emptyset$. Then, the MEM estimator $y_{\scriptscriptstyle MEM}$ exists. If, in addition,  $S^*$ is convex and $\idom {\psi^*_{P_{\hat\lambda}}}\cap S^*\neq\emptyset$,  $y_{\scriptscriptstyle MEM}$   is unique and  in  $\idom {\psi^*_{P_{\hat\lambda}}}\cap S^*$.
\end{theorem}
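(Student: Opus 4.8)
The plan is to rewrite the constrained problem defining $y_{\scriptscriptstyle MEM}$ as the unconstrained minimization of the sum $g:=\psi^*_{P_{\hat\lambda}}+\delta_{S^*}$ over $\real^d$, and then to dispatch the two halves of the statement by two different tools: a Weierstrass-type attainment result for the mere existence claim, and \cref{model:lem:when_mem_est_in_inetrior} for the uniqueness/interior-location claim. First I would assemble the structural facts already at hand. By \cref{mem:thrm:Cramer_and_MEM_equiv} and \cref{mem:thrm:MEM_properties}, under \cref{mem:asmp:blanket_minimal_and_steep,mem:asmp:Cramer_and_MEM_equiv_cond} the function $\psi^*_{P_{\hat\lambda}}$ is closed, proper, convex, of Legendre type, nonnegative, and coercive (hence level bounded). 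Since $S^*$ is nonempty and closed, $\delta_{S^*}$ is proper, closed, and nonnegative; the feasibility hypothesis $S^*\cap\dom\psi^*_{P_{\hat\lambda}}\neq\emptyset$ then makes $g$ proper, while closedness of both summands makes $g$ lower semicontinuous, and $\delta_{S^*}\geq 0$ lets $g$ inherit level boundedness from $\psi^*_{P_{\hat\lambda}}$.

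For existence, I would invoke the extended-real-valued Weierstrass theorem (a proper, lower semicontinuous, level-bounded function attains its infimum; e.g. the version in \cite{rockafellar2009variational} or \cite{beck2017first}): this gives $\argmin g=\argmin\{\psi^*_{P_{\hat\lambda}}(y):y\in S^*\}\neq\emptyset$, so $y_{\scriptscriptstyle MEM}$ exists. For the second part, assume in addition that $S^*$ is convex and $\idom{\psi^*_{P_{\hat\lambda}}}\cap S^*\neq\emptyset$. Then I would apply \cref{model:lem:when_mem_est_in_inetrior} with $\phi=\psi^*_{P_{\hat\lambda}}$ (closed, Legendre type, coercive) and $\varphi=\delta_{S^*}$ (proper, closed, convex, and bounded from below by $0$): the hypothesis $\idom\phi\cap\dom\varphi=\idom{\psi^*_{P_{\hat\lambda}}}\cap S^*\neq\emptyset$ holds by assumption, $\phi$ is coercive while $\varphi$ is bounded below, so the lemma yields a unique minimizer $y^*$ of $\phi+\varphi$ with $y^*\in\idom\phi\cap\dom\varphi=\idom{\psi^*_{P_{\hat\lambda}}}\cap S^*$; this $y^*$ is exactly $y_{\scriptscriptstyle MEM}$.

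I do not expect a genuine obstacle; the content is in matching hypotheses to the already-established properties of $\psi^*_{P_{\hat\lambda}}$ and of $\delta_{S^*}$. The one point that needs a little care is that the existence claim covers \emph{non-convex} $S^*$, so \cref{model:lem:when_mem_est_in_inetrior} is unavailable there (it requires $\varphi$ convex); hence existence must be obtained directly from lower semicontinuity and level boundedness of $g$, using neither convexity of $S^*$ nor uniqueness, and only afterwards does the convex hypothesis on $S^*$ enter to upgrade to uniqueness and to place the minimizer in $\idom{\psi^*_{P_{\hat\lambda}}}$.
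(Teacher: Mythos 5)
Your proposal is correct and follows essentially the same route as the paper: both reduce the problem to minimizing $\psi^*_{P_{\hat\lambda}}+\delta_{S^*}$, obtain existence from properness, closedness, and coercivity/level-boundedness of this sum (the paper cites an attainment result of Auslender--Teboulle where you invoke an equivalent Weierstrass-type theorem), and then settle uniqueness and the interior location via \cref{model:lem:when_mem_est_in_inetrior} with $\phi=\psi^*_{P_{\hat\lambda}}$ and $\varphi=\delta_{S^*}$. The hypothesis-matching, including your remark that existence must be handled without convexity of $S^*$, is exactly as in the paper's argument.
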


\begin{proof}
	Recall that, by \cref{mem:thrm:MEM_properties},  $\psi^*_{P_{\hat\lambda}}$ is coercive and of Legendre type (proper, closed, steep and strictly convex on the interior of its domain). Observe that $S^*\subset \real^d$ is  closed and  $S^*\cap \dom \psi^*_{P_{\hat\lambda}}\neq \emptyset$. Thus, the function $\psi^*_{P_{\hat\lambda}}+\delta_{S^*}$ is proper, closed and coercive. Hence, the existence of the MEM estimator follows from \cite[Remark 3.4.1, Theorem 3.4.1]{auslender2006asymptotic}.
	The case when $S^*$ is convex and $\idom {\psi^*_{P_{\hat\lambda}}}\cap S^*\neq\emptyset$ follows from \cref{model:lem:when_mem_est_in_inetrior} with $\phi=\psi^*_{P_{\hat\lambda}}$ and $\varphi=\delta_S$ due to the coercivity of $\psi^*_{P_{\hat\lambda}}$ and the fact that $\delta_S$ is bounded from below.
\end{proof}

\subsubsection{Analogy Between MEM and ML (for Exponential Families)}

\noindent \emph{Maximum likelihood} (ML) is arguably the most popular principle for statistical estimation. Here, the estimated parameters are chosen as the most likely to produce a given sample of observed data while satisfying model assumptions. More precisely, for some $\Omega\subseteq\real^d$, the model is defined by means of a nonempty, closed set $S\subseteq\real^d$ of admissible parameters and a parameterized family of distributions 
$F_\Lambda=\{P_\lambda:\lambda\in\Lambda\subseteq\real^m\}\subset\PPP(\Omega)$ with densities $f_{P_\lambda}$. Given a sample of observed data $\hat y\in\real^d$, the ML estimator $\lambda_{ML}(\hat{y},F_\Lambda,S)$ is defined as 
\begin{gather*}
	\lambda_{\scriptscriptstyle ML}(\hat{y},F_\Lambda,S):=\argmax\{\log f_{P_\lambda}(\hat y):\lambda\in S\cap\Lambda\}.
\end{gather*}
\noindent In order to simplify notation, we will write $\lambda_{\scriptscriptstyle ML}:=\lambda_{\scriptscriptstyle ML}(\hat{y},F_\Lambda,S)$ when the dependence on the triple $(\hat{y},F_\Lambda,S)$ is clear from the context.\withsmallskip

An intriguing connection between the ML and MEM estimator comes to light when $\Lambda$ is the natural parameter space  $\Theta_P$ of an exponential family induced by $P\in\MMM(\Omega)$. 
The MEM estimator can then be retrieved by solving one of two alternative optimization problems each of which has a closely related problem that yields the ML estimator. One problem is driven by information-theoretic arguments, while the other emphasizes a connection motivated by convex duality. These connections were previously observed in \cite[Chapter 6]{brown1986fundamentals} (also \cite{ben1988role}) and are summarized in the following theorem. 
For consistency, we denote the ML estimator as $\theta_{\scriptscriptstyle ML}$.
\begin{theorem}[MEM and ML estimator analogy]
	\label{model:thrm:MEM_ML_analogy}
	Let $\FFF_P$ be a minimal and steep exponential family generated by ${P\in\MMM(\Omega)}$ and assume  that, for any $\theta\in\inte\Theta_P$, \cref{mem:asmp:Cramer_and_MEM_equiv_cond} holds with respect to $P_\theta\in\PPP(\Omega)$.
	Let $S,S^*\subseteq\real^d$  such that  $S\cap\dom\psi_P\neq\emptyset$ and $S^*\cap\dom\psi_P^*\neq\emptyset$. Finally, let $\hat y\in\inte\Omega_P^{cc}$ and set $\hat{\theta}:=\nabla \psi^*_{P}(\hat{y})$. Then the following hold:
	\vspace{1mm}
	\begin{enumerate}
		\item[(a)] (Primal analogy) If 
		$S^*\cap \idom {\psi^*_P}\neq \emptyset$ and $\nabla \psi_P^*(S^*\cap\idom{\psi_P^*}) = S\cap\idom{\psi_P}$, then $y_{\scriptscriptstyle MEM}=\nabla \psi_P(\theta_{\scriptscriptstyle MEM})$ where
		\begin{gather}
			\label{model:eq:mle_and_mem_reverse_KL}
			\theta_{\scriptscriptstyle MEM}\in\argmin\{\KL{P_\theta}{P_{\hat\theta}}:\theta\in S\} \quad\text{and}\quad 
			\theta_{\scriptscriptstyle ML} \in\argmin\{\KL{P_{\hat\theta}}{P_\theta}:\theta\in S\}.
		\end{gather}
		\item[(b)] (Dual analogy): We have
		\begin{gather}
			\label{model:eq:mle_and_mem_reverse}				
				 y_{\scriptscriptstyle MEM}\in\argmin\{D_{\psi_P^*}(y,\hat y):y\in S^*\}\quad\text{and}\quad
				\theta_{\scriptscriptstyle ML} \in\argmin\{D_{\psi_P}(\theta,\hat \theta):\theta\in S\}.
		\end{gather}
	\end{enumerate}
\end{theorem}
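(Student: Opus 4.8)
The plan is to unwind both parts by translating the optimization problems via the Legendre-type bijection $\nabla\psi_P\colon\idom\psi_P\to\idom\psi_P^*$ and its inverse $\nabla\psi_P^*$, together with the two Bregman-distance identities we have available: $\KL{P_{\theta_2}}{P_{\theta_1}}=D_{\psi_P}(\theta_1,\theta_2)$ from \cref{pre:rmrk:KL_special_cases} and the relative-Cram\'er identity $\psi^*_{P_{\hat\theta}}(y)=D_{\psi_P^*}(y,\hat y)$ from \cref{mem:lem:relative_kramer}, valid here because the hypotheses of that lemma are exactly what we have assumed. The key observation underlying everything is that, by \cref{model:def:MEM_estimator} with reference family $F_\Lambda=\FFF_P$ and $P_{\hat\lambda}=P_{\hat\theta}$ (legitimate since $\hat y\in\inte\Omega_P^{cc}$ forces $\hat\theta=\nabla\psi_P^*(\hat y)$ to be the unique natural parameter with mean $\hat y$), we have $y_{\scriptscriptstyle MEM}=\argmin\{\psi^*_{P_{\hat\theta}}(y):y\in S^*\}$, and similarly the ML estimator $\theta_{\scriptscriptstyle ML}$ maximizes $\log f_{P_\theta}(\hat y)=\inner{\hat y}{\theta}-\psi_P(\theta)$ over $\theta\in S\cap\Theta_P$.

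\emph{Part (b), dual analogy.} For the MEM side, simply substitute the identity of \cref{mem:lem:relative_kramer}: $\psi^*_{P_{\hat\theta}}(y)=D_{\psi_P^*}(y,\hat y)$ for all $y\in\dom\psi_P^*$, so $\argmin\{\psi^*_{P_{\hat\theta}}(y):y\in S^*\}=\argmin\{D_{\psi_P^*}(y,\hat y):y\in S^*\}$ verbatim. For the ML side, expand the Bregman distance $D_{\psi_P}(\theta,\hat\theta)=\psi_P(\theta)-\psi_P(\hat\theta)-\inner{\nabla\psi_P(\hat\theta)}{\theta-\hat\theta}$; since $\nabla\psi_P(\hat\theta)=\hat y$ and the terms $\psi_P(\hat\theta)$, $\inner{\hat y}{\hat\theta}$ are constants independent of $\theta$, minimizing $D_{\psi_P}(\theta,\hat\theta)$ over $S$ is the same as minimizing $\psi_P(\theta)-\inner{\hat y}{\theta}$ over $S$, i.e.\ maximizing $\inner{\hat y}{\theta}-\psi_P(\theta)=\log f_{P_\theta}(\hat y)$, which is exactly the ML problem (one should note the implicit restriction to $S\cap\dom\psi_P$ is harmless since the objective is $+\infty$ off $\dom\psi_P$).

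\emph{Part (a), primal analogy.} First rewrite the two KL-minimization problems using \cref{pre:rmrk:KL_special_cases}: for $\theta\in\Theta_P$ and $\hat\theta\in\inte\Theta_P$ we have $\KL{P_\theta}{P_{\hat\theta}}=D_{\psi_P}(\hat\theta,\theta)$ and $\KL{P_{\hat\theta}}{P_\theta}=D_{\psi_P}(\theta,\hat\theta)$. The $\theta_{\scriptscriptstyle ML}$ claim then follows immediately from the computation already done in part (b). For $\theta_{\scriptscriptstyle MEM}$: use the Bregman duality relation \eqref{pre:eq:Bregman_duality}, $D_{\psi_P}(\hat\theta,\theta)=D_{\psi_P^*}(\nabla\psi_P(\theta),\nabla\psi_P(\hat\theta))=D_{\psi_P^*}(\nabla\psi_P(\theta),\hat y)$, so that $\theta_{\scriptscriptstyle MEM}\in\argmin\{D_{\psi_P^*}(\nabla\psi_P(\theta),\hat y):\theta\in S\}$. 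Now perform the change of variables $y=\nabla\psi_P(\theta)$: on the interior pieces, $\nabla\psi_P$ is a bijection from $S\cap\idom\psi_P$ onto $S^*\cap\idom\psi_P^*$ by the hypothesis $\nabla\psi_P^*(S^*\cap\idom\psi_P^*)=S\cap\idom\psi_P$, so the minimizer $\theta_{\scriptscriptstyle MEM}$ (which lies in $\idom\psi_P$, since $D_{\psi_P}(\hat\theta,\cdot)$ blows up at the boundary by steepness, cf.\ the argument in \cref{model:lem:when_mem_est_in_inetrior}) maps to the minimizer of $D_{\psi_P^*}(\cdot,\hat y)$ over $S^*\cap\idom\psi_P^*$, which by part (b) and the coercivity/Legendre argument of \cref{model:eq:MEM_estimator_well_def} is precisely $y_{\scriptscriptstyle MEM}$. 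Hence $y_{\scriptscriptstyle MEM}=\nabla\psi_P(\theta_{\scriptscriptstyle MEM})$.

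\emph{Main obstacle.} The routine substitutions are painless; the delicate point is the change-of-variables argument in part (a), specifically justifying that both $\theta_{\scriptscriptstyle MEM}$ and $y_{\scriptscriptstyle MEM}$ lie in the respective interiors of the domains so that the bijection $\nabla\psi_P$ actually carries one optimization problem onto the other without losing or gaining minimizers on the boundary. This requires combining steepness of $\psi_P$ (to push $\theta_{\scriptscriptstyle MEM}$ into $\idom\psi_P$), the well-definedness result \cref{model:eq:MEM_estimator_well_def} applied with $S^*$ under the stated assumption $S^*\cap\idom\psi_P^*\neq\emptyset$ (to push $y_{\scriptscriptstyle MEM}$ into $\idom\psi_P^*$), and the hypothesis relating $S$ and $S^*$ through $\nabla\psi_P^*$; one must also check that minimizing $D_{\psi_P^*}(\cdot,\hat y)$ over $S^*$ versus over $S^*\cap\idom\psi_P^*$ gives the same answer, which again follows because the unique minimizer lands in the interior. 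I would present this interior bookkeeping carefully and treat the substitutions as one-line verifications.
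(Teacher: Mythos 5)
Your proposal is correct and follows essentially the same route as the paper: part (b) via \cref{mem:lem:relative_kramer} for the MEM side and expanding $D_{\psi_P}(\theta,\hat\theta)$ against $\log f_{P_\theta}(\hat y)=\inner{\hat y}{\theta}-\psi_P(\theta)$ for the ML side, and part (a) by combining \cref{pre:rmrk:KL_special_cases}, the duality identity \eqref{pre:eq:Bregman_duality}, and the interiority of $y_{\scriptscriptstyle MEM}$ from \cref{model:eq:MEM_estimator_well_def} together with the bijection hypothesis $\nabla\psi_P^*(S^*\cap\idom{\psi_P^*})=S\cap\idom{\psi_P}$. The "interior bookkeeping" you flag is exactly the point the paper handles (it deduces $y_{\scriptscriptstyle MEM}\in S^*\cap\idom{\psi_P^*}$ first and then transfers, rather than arguing interiority of $\theta_{\scriptscriptstyle MEM}$ directly), so no substantive difference remains.
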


\begin{proof}
	Since $\FFF_P$ is assumed to be minimal and steep,  it is easy to verify (recall \eqref{mem:eq:moment_generating_of_exp_fam}) that $P_\theta$ satisfies \cref{mem:asmp:blanket_minimal_and_steep} for any $\theta\in\inte\Theta_P$.	As we assume $S\cap\dom\psi_P\neq\emptyset$ and $S^*\cap\dom\psi_P^*\neq\emptyset$, the MEM and ML estimator exist due to \cref{model:eq:MEM_estimator_well_def} and \cite[Theorem 5.7]{brown1986fundamentals}, respectively. We now prove (b). Since   $\FFF_P$ is an exponential family, we have $\log f_{P_\theta}(\hat y)=\inner{\hat y}{\theta}-\psi_P(\theta)$ and the ML estimator is a solution to
	\begin{gather*}
		\def\arraystretch{2}
		\begin{array}{rl}
			\displaystyle\max\{\log f_{P_\theta}(\hat y):\theta\in S\} &\displaystyle=\max\{\inner{\hat y}{\theta}-\psi_P(\theta) :\theta\in S\}\\
			&\displaystyle=-\min\{D_{\psi_P}(\theta,\nabla \psi_P^*(\hat y)):\theta\in S\}-\psi_P(\nabla \psi_P^*(\hat y))+\inner{\hat y}{\nabla \psi_P^*(\hat y)}.
		\end{array}
	\end{gather*}
	Omitting terms independent of the minimization and using that $\hat \theta = \nabla \psi_{P}^*(\hat y)$, the formulation for the ML estimator follows. To obtain the formulation for the MEM estimator, observe that, due to 
	\cref{mem:lem:relative_kramer}, we have
	\begin{gather*}
		\min\{\psi_{P_{\hat\theta}}^*(y):y\in S^*\}= \min\{D_{\psi_P^*}(y,\nabla \psi_P(\hat\theta)):y\in S^*\}.
	\end{gather*} 
	Thus, the result follows by recalling that $\hat y = \nabla \psi_{P}(\hat \theta)$. 
	
	We now turn to prove (a). Since ${S^*\cap \idom {\psi^*_P}\neq \emptyset}$ we obtain by \cref{model:eq:MEM_estimator_well_def} that $y_{\scriptscriptstyle MEM}\in S^*\cap\idom{\psi^*_P}$. This fact combined with the assumption $\nabla\psi_P^*(S^*\cap\idom{\psi_P^*}) = S\cap\idom{\psi_P}$ implies that $\nabla \psi_P^*(y_{\scriptscriptstyle MEM})\in S\cap\idom{\psi_P}$. Thus, (a) follows from (b)  due to the Bregman distance dual representation property \eqref{pre:eq:Bregman_duality} and \cref{pre:rmrk:KL_special_cases}. 
\end{proof}

\noindent The primal and dual analogy between the MEM and ML estimator for exponential families clarifies that the two are symmetric principles.

\subsection{Examples - Linear Models}

To illustrate the versatility of the  MEM estimation framework, we will consider the broad class of linear models which are among the most popular paradigms in statistical estimation with applications in numerous fields such as image processing, bio-informatics, machine learning etc.\withsmallskip

We assume that the set $S^*$ of admissible mean value parameters is the image of a convex set $X\subseteq\real^d$ under a linear mapping defined by a measurement matrix $A\in \real^{m\times d}$. In many practical scenarios, this matrix satisfies some application-related properties, which in combination with the set $X$ restricts the image space to a subset of $\real^m$. We will denote by $\CCC$ the set of all matrices that satisfy such a condition for the application in question. The second component in the model is $F_\Lambda=\{P_\lambda:\lambda\in\Lambda\subseteq\real^m\}\subset\PPP(\Omega)$, a reference family indexed by $\lambda\in\Lambda$ such that $\mathbb{E}_{P_{\lambda_1}}=\mathbb{E}_{P_{\lambda_2}}$ if and only if $\lambda_1=\lambda_2$. The reference distribution is specified from this family by means of the observation vector $\hat y$. From \cref{model:rmrk:observations_domain} it follows that such a family of distributions must satisfy $\hat y\in\inte{\Omega_{P_{\hat\lambda}}^{cc}}$ for $\hat{\lambda}$ such that $\mathbb{E}_{P_{\hat\lambda}}=\hat y$. In some cases, this condition imposes additional assumptions that must be satisfied by the measurement vector. We will denote the set of measurement vectors that satisfy such an assumption with respect to the family of distributions under consideration by $D:=\{y\in\real^m:\mathbb{E}_{P_{\lambda}}=y~(\lambda\in\Lambda)\}$. To summarize, an MEM estimator of the linear model outlined above is obtained by solving 
\begin{gather}
	\label{model:eq:linear_model}
	\min\left\{\psi^*_{P_{\hat \lambda}}(Ax):x\in X\right\}\qquad (\hat\lambda\in\Lambda: \mathbb{E}_{P_{\hat\lambda}}=\hat y),
\end{gather} 
under the following set of assumptions:
\begin{assumption}[MEM estimation for linear models]~
	\label{model:asmp:linear_model}
	\begin{enumerate}
		\item The reference family $F_\Lambda$ satisfies \cref{mem:asmp:blanket_minimal_and_steep,mem:asmp:Cramer_and_MEM_equiv_cond}.
		\item The set $X\subseteq\real^d$ is nonempty and convex. 
		\item $A\in\CCC$ and for any $x\in X$ it holds that $Ax\in\dom\psi_P^*$.
		\item The observation vector satisfies $\hat{y}\in D$. 
	\end{enumerate}
\end{assumption}
\noindent In the following table, we present some examples of MEM linear models that correspond to particular choices of a reference family. In all cases, we assume that the reference family admits a separable structure as outlined in \cref{mem:rmrk:Cramer_seperability}. 
The vectors $a_i\;(i=1,\dots,m)$ stand for the $i$th row of the matrix $A$. We set 
\[
\CCC_{0}:= \{A\in\real^{m\times d}_+: \text{A has no zero rows or columns}\}. 
\]
{ \small
\begin{table}[H]\small
	\def\arraystretch{2}
	\centering
	\begin{tabular}{@{}lccccc@{}}
		\toprule
		 Reference family & Objective function ($\psi_{P_{\hat\lambda}}^*\circ A$)                                                                     & $\CCC$              & $X$            & $D$                \\ 
		\midrule
		Normal                                                           &  $\displaystyle\frac{1}{2}\|Ax-\hat{y}\|_2^2$                                                                         & $\real^{m\times d}$ & $\real^d$      & $\real^m$          \\ \medskip
		Poisson                                                          &  $\displaystyle\sum_{i=1}^m\left[\inner{a_i}{x}\log\left(\inner{a_i}{x}/\hat{y}_i\right)-\inner{a_i}{x}+\hat{y}_i\right]$ & $\CCC_{0}$             & $\real^d_{+}$  & $ \real^m_{++}$ \\ \medskip
		Gamma ($\beta=1$) & 
	$\displaystyle\sum_{i=1}^m\left[\inner{a_i}{x}-\hat{y}_i\log\left(\inner{a_i}{x}\right) -\left(\hat{y}_i-\hat{y}_i\log\left(\hat y_i\right)\right) \right]$
	& $\CCC_{0}$ & $\real^d_{++}$ & $\real^m_{+}$ \\
 \bottomrule
	\end{tabular}
	\caption{Linear models under the MEM estimation framework for various reference families.}
	\label{model:tbl:linear_models}
\end{table}
}

\begin{remark}
		Additional models are readily available by choosing any of the reference distributions presented in \cref{mem:tbl:cramer}. Alternatively, one may consider a family of
		linear models where the natural parameters are the ones restricted to the image of a convex set under a linear mapping. This class of models is commonly referred to as \emph{generalized linear models} with a \emph{canonical link function} \cite{nelder1972generalized}. \rqed
\end{remark}
\noindent
The MEM linear model with reference family that corresponds to the normal distribution coincides with its ML counterpart, resulting in the celebrated least-squares model \cite{bjorck1996numerical}.
This phenomenon is unique for the normal distribution and is a direct consequence of the fact that the squared Euclidean norm is the only self-conjugate function \cite[Section 12]{rockafellar1970convex}.\withsmallskip

Linear inverse models under the Poisson noise assumption have been successfully applied in various disciplines including fluorescence microscopy, optical/infrared astronomy, and medical applications such as positron emission tomography (PET) (see, for example, \cite{ben1988role,vardi1985statistical}). 
The MEM linear model with Poisson reference distribution outlined in \cref{model:tbl:linear_models} was previously suggested in \cite[Subsection 5.3]{bauschke2017descent} as an example for the algorithmic setting considered in that work (see further details in \Cref{sec:algos} where we expand on the framework considered in \cite{bauschke2017descent}).\withsmallskip

If, for example, $X=\real^d$ and $\text{rge}{A}=\real^m$ with $m<d$, then  $x\in\real^d$ such that $y_{\scriptscriptstyle ML}=y_{\scriptscriptstyle MEM}=Ax=\hat{y}$. This outcome is not a result of a deep statistical characteristic but a simple consequence of the model's ill-posedness, a situation when the desired solution is not uniquely characterized by the model. Situations like this are among the reasons which motivate the use of \emph{regularizers} which allow for the incorporation of some additional (prior) knowledge of the solution. This approach gives rise to the following extended version of model \eqref{model:eq:linear_model} 
\begin{gather}
	\label{model:eq:regularized_linear_model}
	\min\left\{\psi^*_{P_{\hat \lambda}}(Ax)+ \varphi(x):x\in X\right\}\qquad (\hat\lambda\in\Lambda: \mathbb{E}_{P_{\hat\lambda}}=\hat y),
\end{gather} 
where, in our setting, $\varphi:\real^d\rightarrow\erl$ stands for a proper, closed, and convex function. In \eqref{model:eq:regularized_linear_model}, the optimization formulation is designed to find a solution (model estimator) that balances between two criteria represented by the \emph{fidelity} term $\psi^*_{P_{\hat \lambda}}\circ A$ and the \emph{regularization} term $\varphi$. 
While 
the fidelity term penalizes the violation between the model and observations, the regularization term incorporates prior information (belief) on the solution, and in many cases, when the problem with the fidelity term alone is ill-posed, it also serves as a regularizer. In the context of MEM, the Cram\'er rate function can be used to penalize violations of the solution vector $x\in\real^d$ with respect to some prior reference measure $R\in\PPP(\Omega)$ that satisfies \cref{mem:asmp:blanket_minimal_and_steep,mem:asmp:Cramer_and_MEM_equiv_cond}. In other words, we can set $\varphi(x)=\psi_R^*(x)$.\withsmallskip

In many applications, the desired reference distribution of the regularizer will admit a separable structure (\`{a} la \cref{mem:rmrk:Cramer_seperability}). While this is advantageous from an algorithmic perspective (cf. \cref{algo:rmrk:Breg_operator_separability}), other alternatives are viable. Non-separable priors can be considered in order to promote desirable correlations between the entries of the solution to problem \eqref{model:eq:regularized_linear_model}. E.g., by considering the multinomial, negative multinomial, multivariate normal inverse Gaussian or multivariate normal (with non-diagonal correlation matrix in the latter) reference distributions intrinsically give rise to non-separable modeling. But there are other options that involve separable reference distributions with a composite structure such as 
\begin{gather}
	\label{model:eq:structured_regularizer}
	\varphi(x)=\psi_R^*(Lx)\qquad\text{or}\qquad\varphi(x)=\sum_{i=1}^d\psi_R^*(L_ix),
\end{gather}
where $L\in\real^{r\times d}, L_i\in\real^{r\times d}$. For example, new variants of the well-known (discrete) \emph{total variation} (TV) regularizer \cite{rudin1992nonlinear} can be considered by replacing the norm appearing in the original definition with a Cram\'er rate function while keeping the first-order finite difference matrix (further details are given at the end of \Cref{sec:algos}). 
Different reference distributions might be used to promote desirable, application-specific, properties of the solution. Nevertheless, for all choices of reference distribution, the resulting function will admit some desirable properties, including convexity, differentiability, and coerciveness as established in \cref{mem:thrm:MEM_properties}. As we will see in the following section, these properties allow us to consider a unified algorithmic approach for tackling problem \eqref{model:eq:regularized_linear_model}.

\section{Algorithms}
\label{sec:algos}

The optimization formulations of statistical estimation problems as presented in the previous section are solved by optimization algorithms. Customized methods, such as the ones we consider here, allow us to leverage the structure of a given problem, thus resulting in a significant efficiency improvement compared to general-purpose solvers. The structure of problems which are of interest to us is given by the 
\emph{additive composite model} 
\begin{gather}\label{algo:eq:additive_composite}
	\min \{f(x)+g(x):x\in\real^d\},
\end{gather}
where $f,g:\real^d\rightarrow \erl$ 
are proper, closed, and convex.\withsmallskip

We will assume that both the fidelity and regularization terms, represented by $f$ and $g$, respectively, are continuously differentiable on the interior of their domain. 
This assumption holds for all the modeling paradigms discussed in the previous section. In particular, model \eqref{model:eq:regularized_linear_model} is recovered with $f=\psi_{P}^*\circ A$ and $g=\psi_R^*$. Our focus on this type of problem is for convenience only as our goal is merely to illustrate how modern first-order methods can be used for computing MEM estimators, much like their popular ML counterparts. We point out that we are not limited to this setting. Other models can be considered as well, e.g., by blending a fidelity term originating from an MEM modeling paradigm with a traditional regularizer or vice versa. In this case, similar algorithms are applicable under suitable adjustments.\withsmallskip

The method we consider is the \emph{Bregman proximal gradient} (BPG) method. This first-order iterative algorithm admits a comparably mild per-iteration complexity and as such, it is particularly suitable for contemporary large-scale applications. It is important to notice that many other methods, including second-order and primal-dual decomposition methods, can be also considered in some scenarios and can benefit from the operators derived in this work. Before we  present the BPG method, we need to define its fundamental components
\cite{bauschke2017descent,bolte2018first}.

\vspace{1mm}
\noindent
\noindent{\bf Smooth adaptable kernel:} Let $f:\real^d\rightarrow \erl$ be  proper,  closed and continuously differentiable on $\idom f$. Then $h:\real^d\rightarrow \erl$ of Legendre type is a \emph{smooth adaptable kernel} with respect to $f$ if $\dom h\subseteq \dom f$ and there exists $L>0$ such that $Lh-f$ is convex. 

\vspace{1mm}
\noindent
{\bf Bregman proximal operator:}  Let $g:\real^d\rightarrow \erl$ be  closed and proper and $h:\real^d\rightarrow \erl$ of Legendre type. Then the \emph{Bregman proximal operator} is defined as 
\begin{gather}
	\label{algo:eq:Bref_prox}
	\prox^{h}_{g}\left(\bar{x}\right) := \argmin\left\{g(x)+D_h(x,\bar{x}):x\in\real^n\right\} \qquad (\bar{x}\in \idom{h}).
\end{gather}


\noindent The BPG method is applicable under the following assumption. 

\begin{assumption}
	\label{algo:asmp:Bregman_conditions}
	Consider problem \eqref{algo:eq:additive_composite} and assume that there exists a function of Legendre type  $h:\real^d\rightarrow\erl$ such that:
	\begin{enumerate}
		\item $h$ is a smooth adaptable kernel with respect to $f$.
		\item $h$ induces a computationally efficient Bregman proximal operator with respect to $g$.
	\end{enumerate}
\end{assumption}

\noindent The BPG method reads:

\medskip

\fcolorbox{black}{white}{\parbox{14.5cm}{{\bf (BPG Method)} 
		Pick $t\in(0,1/L]$ and $x^0\in\idom{h}$. For $k=0,1,2,\dots$ compute
		\begin{gather*}
			\def\arraystretch{2}
			\begin{array}{rl}
				x^{k+1} = & \prox^{h}_{t g}\left(\nabla h^* \left(\nabla h(x^k)-t \nabla f(x^k) \right)\right).
			\end{array}
		\end{gather*}
}}

\medskip
\noindent
For $h=(1/2)\|\cdot\|_2^2$ and $f$ convex, $Lh-f$ is convex if and only if $\nabla f$ is $L$-Lipschitz. In this case, the Bregman proximal operator reduces to the classical proximal operator and the BPG method is the well-known proximal gradient algorithm \cite{beck2017first}.\withsmallskip 

The BPG method for solving \eqref{algo:eq:additive_composite} exhibits a sublinear convergence rate \cite{bauschke2017descent}. Under suitable assumptions, the convergence improves to linear \cite{bauschke2019linear}. Accelerated variants, which improve practical performance and have superior theoretical guarantees under additional assumptions, are also available \cite{auslender2006interior,beck2009fast}. For simplicity's sake, we confine ourselves to the basic BPG scheme, but the operators to be presented can be readily applied to the enhanced algorithms.\withsmallskip 

In order to customize the method to a particular instance of problem \eqref{algo:eq:additive_composite}, a smooth adaptable kernel and corresponding Bregman proximal operator must be specified. To illustrate this idea for  MEM estimation, we focus on the linear models discussed in the previous section. In particular, we consider the model \eqref{model:eq:regularized_linear_model} where $\varphi=\psi_R^*$. 
We assume that \cref{model:asmp:linear_model} holds and that the prior reference measure $R\in\PPP(\Omega)$ satisfies \cref{mem:asmp:blanket_minimal_and_steep,mem:asmp:Cramer_and_MEM_equiv_cond}. Furthermore, we assume that $\dom\psi_R\subseteq X$ which allows us to disregard the constraint $x\in X$. The latter assumption holds in many practical situations and we assume it here for simplicity. Otherwise, one can simply apply the BPG method with $g=\psi_R^*+\delta_X$ (under the appropriate adjustments to the proximal operator).
In \cref{algo:tbl:smooth_adaptable_kernels} below, we summarize the smooth adaptable kernels suitable for the models described in the previous section, see \cref{model:tbl:linear_models}. In all cases, the smooth adaptable function admits a separable structure of the form $h(x)=\sum_{j=1}^d h_j(x_j)$ where $h_j:\real\rightarrow\erl\;(j=1,\dots,d)$ is a (univariate) function of Legendre type. As we will see in what follows,  this property is very desirable as it gives rise to a computationally efficient implementation of the Bregman proximal operator. For completeness, we include the explicit formulas for the operators involved in the BPG method. 
{\small
\begin{table}[H]\small
	\centering
	\def\arraystretch{2}
	\begin{tabular}{@{}lcccc@{}}
		\toprule
		Reference family                                                                                                                                                       & Kernel ($h_j$)               & Constant ($L$)   & $[\nabla h(x)]_j$           & $[\nabla h^*(z)]_j$                                      \\ \midrule
		Normal                                         &  $\displaystyle (1/2)x_j^2$   & $\displaystyle\|A\|_2 := \sqrt{\lambda_{\max}(A^TA)}$  & $\displaystyle x_j$         & $\displaystyle z_j$                 \\
		Poisson   & $\displaystyle x_j\log(x_j)$ &  
		$\displaystyle \|A\|_1:=\max_{j=1,2,\dots,d}\sum_{i=1}^m|A_{i,j}|$ & $\displaystyle \log(x_j)+1$ & $\displaystyle \exp(z_j-1)$\\
		Gamma ($\beta=1$) & $\displaystyle -\log(x_j)$  & $\displaystyle \|\hat{y}\|_1:=\sum_{i=1}^m|\hat{y}_i|$ & $\displaystyle-1/x_j$ & $\displaystyle-1/z_j$ \\
		\bottomrule
	\end{tabular}
	\caption{Smooth adaptable kernels and related operators that correspond to the objective function ($f=\psi_{P_{\hat{\theta}}}^*\circ A$) of the linear models listed in \cref{model:tbl:linear_models}.}
	\label{algo:tbl:smooth_adaptable_kernels}
\end{table}
}
\noindent The kernel and related constant that corresponds to the normal reference family is a well-known consequence due to the Lipschitz gradient continuity, a special case of the smooth adaptability property considered here.\footnote{More precisely, the equivalence holds for convex functions such as the ones considered here. For the nonconvex case see an extension of the smooth adaptability condition presented in \cite{bolte2018first}.} The kernel and related constant that corresponds to the Poisson reference family is due to \cite[Lemma 8]{bauschke2017descent}. 
The kernel and related constant that corresponds to the Gamma distribution follows from \cite[Lemma 7]{bauschke2017descent}.\withsmallskip

We now discuss the special form of the Bregman proximal operator in the setting of the linear model \eqref{model:eq:regularized_linear_model} with $\varphi=\psi_R^*$. According to \eqref{algo:eq:Bref_prox}, for any $t>0$, the Bregman proximal operator is defined by the smooth adaptable kernel $h$ and the regularizer $g=\psi_R^*$ as follows:
\begin{gather}
	\label{algo:eq:Bregman_prox_operator_for_Cramer}
	\prox^{h}_{t\psi_{R}^*}\left(\bar{x}\right)=\argmin\left\{t\psi_{R}^*(u)+D_{h}(u,\bar{x}):u\in\real^d\right\}.
\end{gather}
The following theorem records that, in our setting, the above operator is well-defined. 
\begin{theorem}[Well-definedness of the Bregman proximal operator]
	\label{algo:lem:breg_prox_operator_well_defined}
	Let $h:\real^d\rightarrow\erl$ be of Legendre type and let $R\in\PPP(\Omega)$ be a reference distribution satisfying the conditions in \cref{mem:asmp:blanket_minimal_and_steep,mem:asmp:Cramer_and_MEM_equiv_cond}. Assume further that $\idom h\cap \dom \psi_R^*\neq\emptyset$. Then, for any $t>0$ and $\bar{x}\in\idom{h}$, the Bregman proximal operator defined in \eqref{algo:eq:Bregman_prox_operator_for_Cramer} produces a unique point
	in $\idom{h}\cap\dom{\psi_R^*}$.
\end{theorem}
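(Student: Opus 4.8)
The plan is to recognize the minimization problem defining $\prox^{h}_{t\psi_{R}^*}(\bar x)$ in \eqref{algo:eq:Bregman_prox_operator_for_Cramer} as an instance of the optimization problem treated in \cref{model:lem:when_mem_est_in_inetrior}. Concretely, I would set $\phi := D_h(\cdot,\bar x)$ and $\varphi := t\psi_R^*$, so that the objective being minimized is exactly $\phi+\varphi$, and then check that this pair satisfies the hypotheses of \cref{model:lem:when_mem_est_in_inetrior}. Since $\dom\phi=\dom h$ (so $\idom\phi=\idom h$) and $\dom\varphi=\dom\psi_R^*$, the lemma's conclusion -- a unique minimizer lying in $\idom\phi\cap\dom\varphi$ -- is precisely the assertion that $\prox^{h}_{t\psi_{R}^*}(\bar x)$ is a single point in $\idom h\cap\dom\psi_R^*$.

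To invoke the lemma I would verify its three ingredients in turn. First, $\phi=D_h(\cdot,\bar x)$ is closed and of Legendre type: by definition $D_h(u,\bar x)=h(u)-h(\bar x)-\inner{\nabla h(\bar x)}{u-\bar x}$ differs from $h$ only by an affine function, so $\phi$ has the same (nonempty) domain as $h$, is closed, convex, strictly convex on $\idom h$, differentiable there with $\nabla\phi(u)=\nabla h(u)-\nabla h(\bar x)$, and steep, because for any $\{u^k\in\idom h\}\to\bar u\in\bdr{(\dom h)}$ one has $\|\nabla\phi(u^k)\|\geq\|\nabla h(u^k)\|-\|\nabla h(\bar x)\|\to\infty$ by steepness of $h$; hence $\phi$ meets \cref{pre:def:Legendre}. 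Second, $\varphi=t\psi_R^*$ with $t>0$ is proper, closed, convex (by \cref{mem:thrm:Cramer_and_MEM_equiv}, since $R$ satisfies \cref{mem:asmp:blanket_minimal_and_steep,mem:asmp:Cramer_and_MEM_equiv_cond}) and coercive (by \cref{mem:thrm:MEM_properties}(c), with coercivity preserved under multiplication by $t>0$). Third, $\phi(u)=D_h(u,\bar x)\geq 0$ for all $u$ (nonnegativity of the Bregman distance, using $\bar x\in\idom h$), so $\phi$ is bounded from below, and the constraint qualification $\idom\phi\cap\dom\varphi=\idom h\cap\dom\psi_R^*\neq\emptyset$ is exactly the standing hypothesis. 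Applying \cref{model:lem:when_mem_est_in_inetrior} (with $\varphi$ in the ``coercive'' role and $\phi$ in the ``bounded below'' role) then delivers a unique minimizer of $\phi+\varphi$ that lies in $\idom h\cap\dom\psi_R^*$, which is the claim.

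The argument is essentially a packaging exercise, so there is no deep obstacle; the only genuine verification is that the affine perturbation $D_h(\cdot,\bar x)$ of the Legendre-type kernel $h$ is again of Legendre type, and within that the preservation of steepness. The one point that must be handled with care is the assignment of roles: one must take $\phi=D_h(\cdot,\bar x)$ rather than $\phi=t\psi_R^*$, because with the latter choice the constraint qualification demanded by \cref{model:lem:when_mem_est_in_inetrior} would become $\idom{\psi_R^*}\cap\dom h\neq\emptyset$, which is a priori stronger than the hypothesis $\idom h\cap\dom\psi_R^*\neq\emptyset$ actually assumed.
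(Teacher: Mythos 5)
Your proposal is correct and follows essentially the same route as the paper: the paper likewise applies \cref{model:lem:when_mem_est_in_inetrior} with $\phi=D_h(\cdot,\bar x)$ (proper, of Legendre type, bounded below by zero) and $\varphi=t\psi_R^*$ (coercive by \cref{mem:thrm:MEM_properties} and $t>0$). The only difference is that you spell out why $D_h(\cdot,\bar x)$ inherits the Legendre property (affine perturbation, steepness estimate), a step the paper simply asserts, so your write-up is a slightly more detailed version of the same argument.
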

\begin{proof}
Since $\bar{x}\in\idom{h}$, the function $D_h(\cdot,\bar{x})$ is proper. In addition, since $h$ is of Legendre type, so is $D_h(\cdot,\bar{x})$. Finally, $D_h(\cdot,\bar{x})$ is bounded below (by zero) by the convexity of $h$. The result follows from \cref{model:lem:when_mem_est_in_inetrior} with $\phi=D_h$ and $\varphi=t\psi_R^*$ due to the aforementioned properties of $D_h$ and the coercivity of $t\psi_R^*$ (\cref{mem:thrm:MEM_properties} and $t>0$). 
\end{proof}
\noindent We now show that this operator is also computationally tractable. For many reference distributions, this fact stems from the following separability property.
\begin{remark}[Separability of the Bregman proximal operator]
	\label{algo:rmrk:Breg_operator_separability}
	In all cases under consideration, the smooth adaptable kernel $h:\real^d\rightarrow\erl$ admits a separable structure $h(x)=\sum_{j=1}^d h_j(x_j)$. Therefore, by \eqref{pre:eq:Bregman_linear_additivity}, the induced Bregman distance satisfies: 
	$D_h(x,y) = \sum_{i=1}^d D_{h_i}(x_i,y_i).$
	If, in addition, the Cram\'er rate function admits a separable structure $\psi_{R}^*=\sum_{i=1}^{d}\psi_{R_i}^*$ (cf. \cref{mem:rmrk:Cramer_seperability}),  then the optimization problem defining the Bregman proximal operator is separable and can be evaluated for each component of $\bar{x}$.  \rqed
\end{remark}
Given a particular instance of problem \eqref{algo:eq:additive_composite}, with fidelity term $f=\psi_{P_{\hat\lambda}}^*\circ A$ and regularizer $g=\psi_R^*$, one can derive a formula for the corresponding Bregman proximal operator. These formulas are summarized in \cref{algo:tbl:breg_prox_Normal,algo:tbl:breg_prox_Poisson,algo:tbl:breg_prox_Gamma} for each of the combinations of linear models (by using a compatible kernel generating distance from \cref{algo:tbl:smooth_adaptable_kernels}) and regularizers from \cref{mem:tbl:cramer}.
Some formulas are given in a closed form, others must be evaluated numerically through a solution of a nonlinear system.\footnote{The solution of the nonlinear system can be efficiently approximated by various methods. In our implementation, building upon the fact that the systems involve monotonic functions (since they stem from the optimality conditions of a convex problem), we used a variant of safeguarded Newton-Raphson method.} Due to \cref{algo:rmrk:Breg_operator_separability}, for most of the regularizer reference distributions (excluding only the multivariate normal, multinomial and negative multinomial) the resulting subproblem is separable. Thus, for the sake of simplicity and without loss of generality, we assume that $d=1$, i.e., the resulting formulas correspond to one entry of the vector produced by the operator. The general case follows by applying the operator components-wise on all the elements of a vector $\bar{x}\in\real^d$. An implementation of the operators along with selected algorithms, applications, and detailed derivations of the operators can be found under:
\\
\begin{center}
\url{https://github.com/yakov-vaisbourd/MEMshared}.
\end{center}
\bigskip
The following table lists the formulas of Bregman proximal operators for the normal linear family. In this case, the operator reduces to the classical proximal operator \cite{moreau1965proximite}. 
\medskip

{\small
	\centering
	\def\arraystretch{2}
	\begin{longtable}{lc}
		\toprule 
		\multicolumn{1}{c}{Reference Distribution ($R$)}                                                                                                                           & Proximal Operator $(x^+=\prox_{t\psi_R^*}(\bar x))$                                                                                                                                                                                                                                                                                                                                                                    \\ \midrule
		\endfirsthead
		\multicolumn{2}{@{}l}{\ldots continued}\\\toprule
		\multicolumn{1}{c}{Reference Distribution ($R$)}                                                                                                                           & Proximal Operator $(x^+=\prox_{t\psi_R^*}(\bar x))$                                                                                                                                                                                                                                                                                                                                                                  \\ \midrule
		\endhead 
		\midrule
		\multicolumn{2}{r@{}}{continued \ldots}\\

		\endfoot
		\endlastfoot
		{\color[HTML]{333333} \def\arraystretch{1}\begin{tabular}[c]{@{}l@{}}Multivariate Normal\\ $(\mu\in\real^d, \Sigma\in\bbS^d:\Sigma\succ 0)$\end{tabular}}                                & {\color[HTML]{333333} $x^+=(tI+\Sigma)^{-1}(\Sigma\bar{x}+t\mu)$}                                                                                                                                                                                                                                          \medskip\\
		\def\arraystretch{1}\begin{tabular}[c]{@{}l@{}}Multivariate Normal-inverse \\ 
			Gaussian $\big(\mu,\beta\in\real^d,~\alpha,\delta\in\real,$\\ 
			$\Sigma\in\real^{d\times d}:\delta>0,~\Sigma\succ0,$\\
			$\alpha^2\geq\beta^T\Sigma\beta$,
			$\gamma:=\sqrt{\alpha^2-\beta^T\Sigma\beta}\big)$\end{tabular}  &     \def\arraystretch{1}\begin{tabular}[t]{@{}l@{}}$x^+=\left(I+\rho\Sigma^{-1}\right)^{-1}\left(t\beta+\bar{x}+\rho\Sigma^{-1}\mu\right),$ where $\rho\in\real_+:$\medskip\\
			$\qquad(\rho\delta)^2+\|\left(\rho^{-1}I+\Sigma^{-1}\right)^{-1}\left(t\beta+\bar{x}-\mu \right)\|_{\Sigma^{-1}}^2=(\alpha t)^2$
		\end{tabular}	 \medskip\\
		Gamma $(\alpha,\beta\in\real_{++})$                                                                                                                                                               &         $x^+=\left(\bar{x}-t\beta+\sqrt{(\bar{x}-t\beta)^2+4t\alpha}\right)/2$                                                                                                                                                                                                                                                                                                                                                                               \medskip\\
		Laplace $(\mu\in\real,~b\in\real_{++})$                                                                                                                                       & \def\arraystretch{1}\begin{tabular}[t]{@{}l@{}}$\hspace{3.3cm}x^+=\begin{cases}\mu,& \bar{x}=\mu,\\\mu+b\rho,& \bar{x}\neq\mu,\end{cases}$\medskip\\ where $\rho\in\real:\quad\alpha_1\rho^3+\alpha_2\rho^2+\alpha_3\rho+\alpha_4=0$,\medskip\\ with $\alpha_1=(b/t)^2b^2,~\alpha_2=2(b/t)^2b(\mu-\bar{x}),$\medskip\\ 
		$\quad\alpha_3=(b/t)^2(\mu-\bar{x})^2-2(b/t)b-1,~\alpha_4=-2(b/t)(\mu-\bar{x})$	
		\end{tabular} \medskip\\
		Poisson\footnotemark{}
		$(\lambda\in\real_{++})$                                                                                                                                     &   $x^+ = tW\left(\frac{\lambda e^{\bar{x}/t}}{t}\right)    $                     
		\medskip\\
		\def\arraystretch{1}\begin{tabular}[c]{@{}l@{}}Multinomial $(n\in\nn, p\in\Delta_{(d)}$: \\
		$\sum_{i=1}^d p_i<1)$
		\end{tabular}                                                                                                   &    
		$x^+\in\real^d_{+}\cap I(p):\quad (x^+_i-\bar{x}_i)/t+\log\left(\frac{x^+_i(1-\sum_{j=1}^dp_j)}{p_i(n-\sum_{j=1}^dx^+_j)}\right)=0$
      \medskip\\
		\def\arraystretch{1}\begin{tabular}[c]{@{}l@{}} Negative Multinomial $(p\in[0,1)^d,$\\
		$x_0\in\real_{++},~p_0:=1-\sum_{i=1}^d p_i>0)$	
		\end{tabular}                                                                                                                                                 &

	$x^+\in\real^d_+\cap I(p):~
	(x^+_i-\bar{x}_i)/t+\log\left(\frac{x^+_i}{p_i(x_0+\sum_{j=1}^dx^+_j)}\right)=0,
	$

\medskip\\
		\def\arraystretch{1}\begin{tabular}[c]{@{}l@{}} Discrete Uniform \\ $(a,b\in\real:a<b)$      \end{tabular}                                                                  &       \def\arraystretch{1}\begin{tabular}[t]{@{}l@{}}$x^+=\bar{x}-t\theta^+$
		where $\theta^+=0$ if $\bar{x}=(a+b)/2$, \medskip\\
		otherwise: $\theta^+\in\real\setminus\{0\}$:\medskip\\
		$\quad t(\theta^+-\bar{x}/t)+ \frac{(b+1)e^{(b+1)\theta^+}-ae^{a\theta^+}}{e^{(b+1)\theta^+}-e^{a\theta^+}}=\frac{e^{\theta^+}}{e^{\theta^+}-1}$
		                    \end{tabular}                                                                                                                                                                                                                                                                                \medskip\\
	                    \def\arraystretch{1}\begin{tabular}[c]{@{}l@{}}Continuous Uniform \\ $(a,b\in\real:a\leq b)$ \end{tabular}                                                                         &       \def\arraystretch{1}\begin{tabular}[t]{@{}l@{}}$x^+=\bar{x}-t\theta^+$
	                    where $\theta^+=0$ if $\bar{x}=(a+b)/2$, \medskip\\
	                    otherwise: $\theta^+\in\real\setminus\{0\}$:\medskip\\
	                    $\qquad t(\theta^+-\bar{x}/t)+ \frac{b e^{b \theta^+}-a e^{a \theta^+}}{e^{b \theta^+}-e^{a \theta^+}}=\frac{1}{\theta^+}$
                    \end{tabular}                                                                                                                                                                                                                                                                                                                                                                                                                                     \medskip\\
		Logistic $(\mu\in\real,~s\in\real_{++})$:                                                                                                                                      & \def\arraystretch{1}\begin{tabular}[t]{@{}l@{}}$x^+=\bar{x}-t\theta^+$
			where $\theta^+=0$ if $\bar{x}=\mu$, \medskip\\
			otherwise: $\theta^+\in\real\setminus\{0\}$:\medskip\\
			$\qquad t\theta^++\frac{1}{\theta^+} + \frac{\pi s}{\tan{(-\pi s\theta^+)}}=\bar{x}-\mu$
		\end{tabular}                                                                              
	 \\ \bottomrule
	\caption{Bregman Proximal Operators - Normal Linear Model  ($h=\frac{1}{2}\|\cdot\|^2$).}
	\label{algo:tbl:breg_prox_Normal}
	\end{longtable}
}
\footnotetext{We denote by $W:\real\rightarrow\real$ the Lambert $W$ function (see, for example, \cite{corless1996lambertw}).}

\noindent Recall that the Cram\'er rate function induced by a uniform (discrete/continuous) or logistic reference distribution does not admit a closed form. To compute their proximal operator we appeal to the corresponding dual of the subproblem in \eqref{algo:eq:Bregman_prox_operator_for_Cramer}. This is done via Moreau decomposition (see, e.g., \cite[Theorem 6.45]{beck2017first}) which applies when the  Bregman proximal operator \eqref{algo:eq:Bregman_prox_operator_for_Cramer} reduces to the classical proximal operator (i.e., when $h=(1/2)\|\cdot\|_2^2$). For the general case, we will employ a result summarized in \cref{algo:lem:Moreau_gen} and \cref{algo:cor:Moreau_gen_and_trivial_case} below. 
Some notation is needed:  for a function $g:\real^d\rightarrow\erl$  proper, closed and convex and  of  $h:\real^d\rightarrow\erl$ of Legendre type we set
\begin{gather}
	\label{algo:eq:inf_conv_sol}
\text{iconv}^h_g(\bar{x}):=\argmin\left\{g(x)+h(\bar{x}-x):x\in\real^d \right\}.
\end{gather}
This is the (possibly empty) solution of the optimization problem defining the \emph{infimal convolution} $(g\square h)(\bar{x}): = \inf\left\{g(x)+h(\bar{x}-x):x\in\real^d \right\}$.
\begin{lemma}
	\label{algo:lem:Moreau_gen}
Let $g:\real^d\rightarrow\erl$ be proper, closed, and convex, and let $h:\real^d\rightarrow\erl$ be of Legendre type. Let $\bar{x}\in\idom{h}$ and assume that there exists a unique point $x^+:=\prox^h_{g}(\bar{x})$ satisfying $x^+\in\idom{h}\cap\dom{g}$. Then, $y^+:=\text{\emph{iconv}}^{h^*}_{g^*}(\nabla h(\bar{x}))$ exists and it holds that $\nabla h(x^+)+y^+=\nabla h(\bar{x})$. 
\end{lemma}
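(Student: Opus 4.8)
The plan is to recognize the Bregman proximal subproblem as (a shift of) a minimization of $g+h$, and then to read the solution $y^+$ of the dual infimal-convolution problem directly off the primal solution $x^+$, using only Fenchel--Young duality and the Legendre-type structure. First I would strip the constant terms from $D_h(\cdot,\bar x)$: since
\[
D_h(x,\bar x)=h(x)-\inner{\nabla h(\bar x)}{x}+\big(\inner{\nabla h(\bar x)}{\bar x}-h(\bar x)\big),
\]
the last summand is independent of $x$, so $x^+=\prox^h_g(\bar x)$ is exactly the (assumed unique) minimizer over $\real^d$ of the proper, closed, convex function $x\mapsto g(x)+\big(h(x)-\inner{\nabla h(\bar x)}{x}\big)$. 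Its second summand is finite and continuous at $x^+\in\idom h$ (as $h$ is of Legendre type), so Fermat's rule together with the subdifferential sum rule \cite[Theorem 16.2, Corollary 16.38]{bauschke2011convex} gives $0\in\partial g(x^+)+\big(\nabla h(x^+)-\nabla h(\bar x)\big)$. Putting $y^+:=\nabla h(\bar x)-\nabla h(x^+)$ --- which already delivers the claimed identity $\nabla h(x^+)+y^+=\nabla h(\bar x)$ --- this reads $y^+\in\partial g(x^+)$.

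Next I would check that $y^+$ attains the infimal convolution defining $\text{iconv}^{h^*}_{g^*}(\nabla h(\bar x))$. Write $z:=\nabla h(\bar x)$, so $z-y^+=\nabla h(x^+)$. Applying the Fenchel--Young equality \eqref{pre:eq:fechel_in_equality_equivs} to $g$ (using $y^+\in\partial g(x^+)$) and to $h$ (using $\nabla h(x^+)\in\partial h(x^+)$, valid since $h$ is Legendre type and $x^+\in\idom h$) yields
\[
g^*(y^+)+h^*(z-y^+)=\big(\inner{y^+}{x^+}-g(x^+)\big)+\big(\inner{\nabla h(x^+)}{x^+}-h(x^+)\big)=\inner{z}{x^+}-(g+h)(x^+).
\]
On the other hand, adding the two Fenchel--Young inequalities $g^*(y)\ge\inner{y}{x}-g(x)$ and $h^*(z-y)\ge\inner{z-y}{x}-h(x)$ and taking the supremum over $x$ shows $g^*(y)+h^*(z-y)\ge(g+h)^*(z)\ge\inner{z}{x^+}-(g+h)(x^+)$ for every $y\in\real^d$. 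Comparing with the displayed identity, $y^+$ is a minimizer; hence the argmin set is nonempty, i.e.\ $y^+$ exists.

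For uniqueness of the minimizer I would use that $h^*$ is of Legendre type, hence strictly convex on $\idom{h^*}$, and that $x^+=\nabla h^*(z-y^+)$ because $z-y^+=\nabla h(x^+)\in\idom{h^*}$ with $\nabla h^*=(\nabla h)^{-1}$. Let $y$ be any minimizer; the subgradient inequalities at $y^+$ for $g^*$ (from $x^+\in\partial g^*(y^+)$, again \eqref{pre:eq:fechel_in_equality_equivs}) and for $h^*$ give $g^*(y)\ge g^*(y^+)+\inner{x^+}{y-y^+}$ and $h^*(z-y)\ge h^*(z-y^+)+\inner{x^+}{y^+-y}$; summing them recovers $g^*(y)+h^*(z-y)\ge g^*(y^+)+h^*(z-y^+)$, and equality is forced. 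Equality in the $h^*$-inequality means $D_{h^*}(z-y,z-y^+)=0$ with $z-y^+\in\idom{h^*}$ and $z-y\in\dom h^*$, which forces $z-y=z-y^+$ by strict convexity, so $y=y^+$.

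I expect the main obstacle to be the bookkeeping around the Legendre-type structure rather than any hard inequality: one must use repeatedly that $\partial h(x^+)=\{\nabla h(x^+)\}$ (so the sum rule collapses and Fenchel--Young produces equalities), that $\nabla h^*\circ\nabla h=\mathrm{id}$ on $\idom h$, and that the relevant Fenchel--Young equalities are invoked only at admissible points --- in particular that $x^+\in\dom g$ (part of the hypothesis) and $z-y^+=\nabla h(x^+)\in\idom{h^*}$. No delicate constraint qualification is needed, since the only substantive one, $\idom h\cap\dom g\ne\emptyset$, is witnessed by $x^+$; correspondingly, nowhere do I need the (nontrivial) \emph{equality} $(g+h)^*=g^*\,\square\,h^*$, only the easy inequality direction.
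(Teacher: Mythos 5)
Your proposal is correct and follows essentially the same route as the paper: both extract the candidate $y^+=\nabla h(\bar x)-\nabla h(x^+)\in\partial g(x^+)$ from the first-order condition of the Bregman proximal subproblem and then certify via Fenchel--Young conjugacy that this point solves the infimal-convolution problem, which is exactly what the paper does by converting the inclusion to $x^+\in\partial g^*\left(\nabla h(\bar x)-\nabla h(x^+)\right)$ and reading it as the stationarity condition of the dual problem. The only differences are presentational: you certify dual optimality by a direct value comparison (Fenchel--Young equalities at $y^+$ against the weak-duality lower bound) rather than by sufficiency of the stationarity condition, and you additionally prove uniqueness of the dual minimizer via strict convexity of $h^*$, a point the paper's proof leaves implicit in writing $\tilde y=y^+$.
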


\begin{proof}
	By the optimality condition of the optimization problem in the definition of the Bregman proximal operator \eqref{algo:eq:Bref_prox} we obtain that 
	\begin{gather*}
		\nabla h(\bar{x})-\nabla h(x^+)\in\partial g(x^+).
	\end{gather*}
	Since $g$ is assumed to be proper, closed and convex, \eqref{pre:eq:fechel_in_equality_equivs} yields
	\begin{gather}
		\label{algo:eq:fo_Breg_prox_dual}
		x^+\in\partial g^*\left(\nabla h(\bar{x})-\nabla h(x^+)\right).
	\end{gather}
	Setting $\tilde{y}:=\nabla h(\bar{x})-\nabla h(x^+)$ and observing that $x^+=\nabla h^*(\nabla h(\bar{x})-\tilde{y})$ we can rewrite \eqref{algo:eq:fo_Breg_prox_dual} as 
	\begin{gather*}
		\nabla h^*(\nabla h(\bar{x})-\tilde y)\in\partial g^*(\tilde y). 
	\end{gather*}
	It is now easy to verify that the above is nothing else but the optimality condition for $\bar y$, thus, $\tilde y=y^+$ and we can conclude that $\nabla h(x^+)+y^+=\nabla h(\bar{x})$, establishing the desired result.
\end{proof}

\noindent 
The following corollary adapts the above lemma to the setting considered in our study. Furthermore, we complement this result with a simple observation which is particularly useful for Bregman proximal operator computations.

\begin{corollary}
	\label{algo:cor:Moreau_gen_and_trivial_case}
	Let $h:\real^d\rightarrow\erl$ be of Legendre type and let $R\in\PPP(\Omega)$ satisfy  \cref{mem:asmp:blanket_minimal_and_steep,mem:asmp:Cramer_and_MEM_equiv_cond}. Assume further that $\idom h\cap \dom \psi_R^*\neq\emptyset$. For $t>0$ and $\bar{x}\in\idom{h}$, let $x^+:=\prox^{h}_{t\psi_R^*}(\bar{x})$ and $\theta^+:=\text{\emph{iconv}}^{h^*}_{t\psi_R(\cdot/t)}(\bar{x})$. Then, $\nabla h(x^+)+\theta^+=\nabla h(\bar{x})$.
	In particular, $\theta^+=0$ (and $x^+=\bar{x}$) if and only if $\bar{x}=\mathbb E_R$.
\end{corollary}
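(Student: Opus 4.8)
The plan is to read the displayed identity off \cref{algo:lem:Moreau_gen}, specialized to $g:=t\psi_R^*$, and then to extract the degenerate case from it. First I would check the hypotheses of \cref{algo:lem:Moreau_gen}. The function $g=t\psi_R^*$ is proper, closed and convex because $t>0$ and, by \cref{mem:thrm:Cramer_and_MEM_equiv}, $\psi_R^*=\kappa_R$ is proper, closed and convex; and the existence of a \emph{unique} point $x^+=\prox^h_{t\psi_R^*}(\bar x)=\prox^h_g(\bar x)$ lying in $\idom h\cap\dom g=\idom h\cap\dom\psi_R^*$ is exactly the content of \cref{algo:lem:breg_prox_operator_well_defined}, whose hypotheses ($h$ Legendre-type, $R$ as in \cref{mem:asmp:blanket_minimal_and_steep,mem:asmp:Cramer_and_MEM_equiv_cond}, $\idom h\cap\dom\psi_R^*\neq\emptyset$, $t>0$, $\bar x\in\idom h$) coincide with those assumed here. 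The one small computation is the conjugate of $g$: using the positive-scaling rule $(tf)^*=t\,f^*(\cdot/t)$ together with the biconjugate identity $(\psi_R^*)^*=\psi_R$ (legitimate since $\psi_R$ is closed, proper and convex by \cref{pre:prop:exp_fam_convexity}), one obtains $g^*=t\psi_R(\cdot/t)$. Then \cref{algo:lem:Moreau_gen} yields at once that $\theta^+=\text{iconv}^{h^*}_{g^*}(\nabla h(\bar x))=\text{iconv}^{h^*}_{t\psi_R(\cdot/t)}(\nabla h(\bar x))$ is well-defined and that $\nabla h(x^+)+\theta^+=\nabla h(\bar x)$, which is the first assertion.

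For the ``in particular'' part, I would first note that $\nabla h$ is injective on $\idom h$ (since $h$, being of Legendre type, is strictly convex there) and that $x^+,\bar x\in\idom h$, so that $\theta^+=\nabla h(\bar x)-\nabla h(x^+)=0$ holds if and only if $x^+=\bar x$. It then remains to prove the equivalence $x^+=\bar x\iff\bar x=\EE_R$. The case $\bar x\notin\dom\psi_R^*$ is immediate: there $x^+\neq\bar x$ because $x^+\in\dom\psi_R^*$, and $\bar x\neq\EE_R$ because $\EE_R\in\dom\psi_R^*$ (indeed $\psi_R^*(\EE_R)=\kappa_R(\EE_R)=0$ by \cref{mem:thrm:MEM_properties}(a)), so both sides fail. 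Hence assume $\bar x\in\idom h\cap\dom\psi_R^*$. Then $x^+$ is the unique minimizer of the convex function $\Phi:=t\psi_R^*+D_h(\cdot,\bar x)$; since $D_h(\cdot,\bar x)$ is finite and differentiable on the open set $\idom h$ with $\nabla_1 D_h(\bar x,\bar x)=\nabla h(\bar x)-\nabla h(\bar x)=0$, the subdifferential sum rule gives $\partial\Phi(\bar x)=t\,\partial\psi_R^*(\bar x)$. Therefore $x^+=\bar x\iff 0\in\partial\Phi(\bar x)\iff 0\in\partial\psi_R^*(\bar x)$, i.e.\ $\bar x$ minimizes $\psi_R^*=\kappa_R$, which by \cref{mem:thrm:MEM_properties}(a) is equivalent to $\bar x=\EE_R$; and conversely when $\bar x=\EE_R$ one sees directly that $\Phi(\bar x)=0$ is the minimum of $\Phi\ge 0$, confirming $x^+=\bar x$ and $\theta^+=0$.

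I do not expect a genuine obstacle, as \cref{algo:lem:breg_prox_operator_well_defined,algo:lem:Moreau_gen} already carry the analytic weight and this corollary merely stitches them together. The points that need care are all bookkeeping: verifying that one may assume $\bar x\in\dom\psi_R^*$ and that $\EE_R\in\dom\psi_R^*$ (both via \cref{mem:thrm:MEM_properties}(a)), invoking the sum rule at a point that may lie on the boundary of $\dom\psi_R^*$, and keeping the scaling by $t$ correct in the identity $g^*=t\psi_R(\cdot/t)$.
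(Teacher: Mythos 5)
Your proposal is correct and follows essentially the same route as the paper: the identity $\nabla h(x^+)+\theta^+=\nabla h(\bar x)$ is obtained by applying \cref{algo:lem:Moreau_gen} with $g=t\psi_R^*$, using \cref{algo:lem:breg_prox_operator_well_defined} for well-definedness and the biconjugation/scaling rules to get $g^*=t\psi_R(\cdot/t)$. For the ``in particular'' part the paper argues via nonnegativity of the objective $u\mapsto t\psi_R^*(u)+D_h(u,\bar x)$ and the characterization of when it vanishes, whereas you argue via first-order optimality ($0\in\partial\psi_R^*(\bar x)$ iff $\bar x=\EE_R$); your variant is a touch more explicit about the direction $\theta^+=0\Rightarrow\bar x=\EE_R$, but it is the same underlying mechanism (uniqueness of the minimizer $\EE_R$ of $\psi_R^*$ together with injectivity of $\nabla h$ on $\idom h$).
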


\begin{proof}
	By \cref{mem:thrm:MEM_properties} we have that $\psi^*_R$ is proper, closed and convex and thus $\psi^{**}_R=\psi_R$ due to \cite[Theorem 4.8]{beck2017first}. By \cref{algo:lem:breg_prox_operator_well_defined} we know that $x^+$ is well-defined. The proof of the first part  then follows directly from \cref{algo:lem:Moreau_gen} (with $g=t\psi^*_R$ and $y^+=\theta^+$) and \cite[Theorem 4.14(a)]{beck2017first}. To see that $\theta^+=0$ if and only if $\bar{x}=\mathbb E_R$, observe that the objective function in the subproblem defining the Bregman proximal operator \eqref{algo:eq:Bregman_prox_operator_for_Cramer} is greater equal than zero, and equality holds if and only if $\bar{x}=\mathbb E_R$ with $x^+=\bar{x}$. Thus, the statement holds true in view of the first part of the current corollary. 
\end{proof}

\noindent


The following tables list the formulas of Bregman proximal operators for the Poisson and Gamma ($\beta=1$) linear families, respectively. Observe that by \cref{algo:lem:breg_prox_operator_well_defined} the Bregman proximal operator is well defined 
if $\idom h\cap \dom \psi_R^*\neq\emptyset$. Since $\idom h=\real^d_{++}$ this implies that for the multinomial and negative multinomial distributions we must assume that $p_i>0$ for all $i=1,2,\dots,d$. Furthermore, for the sake of simplicity, we include the normal and normal inverse-Gaussian distributions. The multivariate variants can be found in the software documentation along with further explanations. 


{\small
	\centering
	\def\arraystretch{2}
		\begin{longtable}{lc}
			\toprule 
			\multicolumn{1}{c}{Reference Distribution ($R$)}                                                                                                                           & Bregman Proximal Operator $(x^+=\prox_{t\psi_R^*}^h(\bar x))$                                                                                                                                                                                                                                                                                                                                                                    \\ \midrule
			\endfirsthead
			\multicolumn{2}{@{}l}{\ldots continued}\\\toprule
			\multicolumn{1}{c}{Reference Distribution ($R$)}                                                                                                                           & Proximal Operator                                                                                                                                                                                                                                                                                                                                                                      \\ \midrule
			\endhead 
			\midrule
			\multicolumn{2}{r@{}}{continued \ldots}\\
			\endfoot
			\endlastfoot
			{\color[HTML]{333333} \def\arraystretch{1}\begin{tabular}[c]{@{}l@{}} Normal\\ $(\mu,\sigma\in\real:~\sigma>0)$\end{tabular}}                                & {\color[HTML]{333333} $x^+= \frac{\sigma}{t}W\left( \frac{t}{\sigma}\bar{x} e^{\frac{t\mu}{\sigma}} \right)$}                                                                                                                                                                                                                                          \medskip\\
			\def\arraystretch{1}\begin{tabular}[c]{@{}l@{}}Normal-inverse Gaussian\\ $\big(\mu,\alpha,\beta,\delta\in\real:~\delta>0$,\\ 
				$\quad\alpha\geq|\beta|,
				~\gamma:=\sqrt{\alpha^2-\beta^2}\big)$\end{tabular}  &     				  \def\arraystretch{1}\begin{tabular}[t]{@{}l@{}}
				$x^+\in\real_{++}:$ \\$(t\alpha/\sigma)(x^+-\mu)=\left(t\beta-\log(x^+/\bar{x})\right)\sqrt{\delta^2+(x^+-\mu)^2/\sigma}$
			\end{tabular}	 \medskip\\
			Gamma $(\alpha,\beta\in\real_{++})$                                                                                                                                                               &         $x^+
			= \frac{\alpha t}{W\left( \frac{\alpha t \exp\left({t\beta}\right)}{\bar{x}} \right)}$                                                                                                                                                                                                                                                                                                                                                                               \medskip\\
			Laplace $(\mu\in\real,~b\in\real_{++})$                                                                                                                                       & \def\arraystretch{1}\begin{tabular}[t]{@{}l@{}}$\hspace{3.3cm}x^+=\begin{cases}\mu,& \bar{x}=\mu,\\\mu+b\rho,& \bar{x}\neq\mu,\end{cases}$\medskip\\  where $\rho\in\real:\quad\rho +\frac{2b}{t}\log\left( \frac{\mu + b\rho}{\bar{x}} \right)  =  \frac{b^2 \rho}{t^2}\log^2\left( \frac{\mu + b\rho}{\bar{x}} \right)$	
			\end{tabular} \medskip\\
			Poisson $(\lambda\in\real_{++})$                                                                                                                                     &                            $x^+ = \bar{x}^{1-\tau}\lambda^{\tau}\qquad(\tau:=\frac{t}{t+1})$
			 \medskip\\
			Multinomial $(n\in\nn, p\in\inte\Delta_{(d)})$                                                                                                       & 
			\def\arraystretch{1}\begin{tabular}[t]{@{}l@{}}
				$\qquad x_i^+=\gamma_i\left(n-\rho\right)^{\tau}\quad \left(\tau:=\frac{t}{t+1},~\gamma_i := \left[\frac{p_i\bar{x_i}^{1/t}}{1-\sum_{j=1}^dp_j}\right]^{\tau}\right)
			    $
			\medskip\\
				$\qquad$where $\rho\in\real:~\rho = (n-\rho)^{\frac{t}{t+1}}\left(\sum_{i=1}^{d}\gamma_i\right)$
			\end{tabular}
                                         \medskip\\
			\def\arraystretch{1}\begin{tabular}[c]{@{}l@{}} Negative Multinomial $(p\in(0,1)^d,$\\
				$x_0\in\real_{++},~p_0:=1-\sum_{i=1}^d p_i>0)$	
			\end{tabular}                                                                                                                                                 &

				$x^+\in\real^d_+\cap I(p):~
				\log\left( \frac{x_i^+}{\bar{x}_i} \right)+t\log\left( \frac{x_i^+}{p_i(x_0+\sum_{j=1}^dx^+_j)} \right) = 0,
				$
			\medskip\\
			\def\arraystretch{1}\begin{tabular}[c]{@{}l@{}} Discrete Uniform \\ $(a,b\in\real:a<b)$      \end{tabular}                                                                  &       \def\arraystretch{1}\begin{tabular}[t]{@{}l@{}}$x^+=\bar{x}e^{-t\theta^+}$
				where $\theta^+=0$ if $\bar{x}=(a+b)/2$, \medskip\\
				otherwise: $\theta^+\in\real\setminus\{0\}$:\medskip\\
				$\quad \frac{(b+1)\mathrm{exp}((b+1)\theta^+) - a\mathrm{exp}(a\theta^+)}{\mathrm{exp}((b+1)\theta^+) - \mathrm{exp}(a\theta^+)}
				= \frac{\mathrm{exp}(\theta^+)}{\mathrm{exp}(\theta^+)-1} + \mathrm{exp}(\bar{x} - t\theta^+ -1)$
			\end{tabular}                                                                                                                                                                                                                                                                                \medskip\\
			\def\arraystretch{1}\begin{tabular}[c]{@{}l@{}}Continuous Uniform \\ $(a,b\in\real:a\leq b)$ \end{tabular}                                                                         &       \def\arraystretch{1}\begin{tabular}[t]{@{}l@{}}$x^+=\bar{x}e^{-t\theta^+}$
				where $\theta^+=0$ if $\bar{x}=(a+b)/2$, \medskip\\
				otherwise: $\theta^+\in\real\setminus\{0\}$:\medskip\\
				$\qquad \frac{b\mathrm{exp}(b\theta^+) - a\mathrm{exp}(a\theta^+)}{\mathrm{exp}(b\theta^+) - \mathrm{exp}(a\theta^+)}
				= \frac{1}{\theta^+} + \mathrm{exp}(\bar{x} - t\theta^+ -1)$
			\end{tabular}                                                                                                                                                                                                                                                                                                                                                                                                                                     \medskip\\
			Logistic $(\mu\in\real,~s\in\real_{++})$:                                                                                                                                      & \def\arraystretch{1}\begin{tabular}[t]{@{}l@{}}$x^+=\bar{x}e^{-t\theta^+}$
				where $\theta^+=0$ if $\bar{x}=\mu$, \medskip\\
				otherwise: $\theta^+\in\real\setminus\{0\}$:\medskip\\
				$\qquad \frac{1}{\theta^+} + \frac{\pi s}{\tan(-\pi s\theta^+)} + \mu = \exp\left({\bar{x} - t\theta^+ -1}\right)$
			\end{tabular}                                                                              
			\\ \bottomrule
		\caption{Bregman Proximal Operators - Poisson Linear Model $(h_j(x)=x_j\log x_j)$}
			\label{algo:tbl:breg_prox_Poisson}
		\end{longtable}
	
}


{\footnotesize
	\centering
	\def\arraystretch{2}
		\begin{longtable}{lc}
			\toprule 
			\multicolumn{1}{c}{Reference Distribution ($R$)}                                                                                                                           & Bregman Proximal Operator $(x^+=\prox_{t\psi_R^*}^h(\bar x))$                                                                                                                                                                                                                                                                                                                                                                   \\ \midrule
			\endfirsthead

	
			{\color[HTML]{333333} \def\arraystretch{1}\begin{tabular}[c]{@{}l@{}} Normal\\ $(\mu,\sigma\in\real:~\sigma>0)$\end{tabular}}                                & {\color[HTML]{333333} $x^+=\left((t/\sigma)\mu-1/\bar{x}+\sqrt{((t/\sigma)\mu-1/\bar{x})^2+4(t/\sigma)}\right)/(2t/\sigma)$}                                           
	          \medskip\\
			\def\arraystretch{1}\begin{tabular}[c]{@{}l@{}}Normal-inverse Gaussian\\ $\big(\mu,\alpha,\beta,\delta\in\real:~\delta>0$,\\ 
				$\quad\alpha\geq|\beta|,
				~\gamma:=\sqrt{\alpha^2-\beta^2}\big)$\end{tabular}  &   
				  \def\arraystretch{1}\begin{tabular}[t]{@{}l@{}}
				  	$x^+\in\real_{++}:$ \\$t\alpha(x^+-\mu)x^+=\left((t\beta-1/\bar{x})x^++1\right)\sqrt{\delta^2+(x^+-\mu)^2}$
			\end{tabular}	 
\medskip\\
\def\arraystretch{1}\begin{tabular}[c]{@{}l@{}}Multivariate Normal-inverse \\ 
	Gaussian $\big(\mu,\beta\in\real^d,~\alpha,\delta\in\real,$\\ 
	$\Sigma = \sigma I, \sigma > 0:\delta>0,~\Sigma\succ0,$\\
	$\alpha^2\geq\beta^T\Sigma\beta$,
	$\gamma:=\sqrt{\alpha^2-\beta^T\Sigma\beta}\big)$\end{tabular}  &     \def\arraystretch{1}\begin{tabular}[t]{@{}l@{}}\hspace{1.5cm}$x^+_i=(w_i+\rho \mu_i+\sqrt{(w_i+\rho \mu_i)^2+4\rho})/(2\rho)$, \medskip\\ with $w_i=t\beta_i-1/\bar{x}_i$ and $\rho\in\real_+:$\medskip\\
	$\qquad(\rho\delta)^2+\frac{1}{4\sigma}\sum_{i=1}^d\left(w_i+\sqrt{(w_i+\mu_i\rho)^2+4\rho}\right)^2  =(\alpha t/\sigma)^2$
\end{tabular}	 \medskip\\
			Gamma $(\alpha,\beta\in\real_{++})$                                                                                                                                                               &        
			 $x^+=\bar{x}(t\alpha+1)/(\bar{x}t\beta+1)$                                                             \medskip\\
			Laplace $(\mu\in\real,~b\in\real_{++})$                                                                                                                                       & \def\arraystretch{1}\begin{tabular}[t]{@{}l@{}}$\hspace{3.3cm}x^+=\begin{cases}\mu,& \bar{x}=\mu,\\\mu+b\rho,& \bar{x}\neq\mu,\end{cases}$\medskip\\ where $\rho\in\real
				:\quad\alpha_1\rho^3+\alpha_2\rho^2+\alpha_3\rho+\alpha_4=0$,\medskip\\ with $\alpha_1=b^2((b/\bar{x})^2-t^2),~\alpha_2=2b(\mu((b/\bar{x})^2-t^2)-b^2(t+1)/\bar{x}),$\medskip\\ 
				$\alpha_3=b^2((1-\mu/\bar{x})^2+2t(1-2\mu/\bar{x}))-t^2\mu^2,~\alpha_4=2tb\mu(1-\mu/\bar{x})$	
			\end{tabular} \medskip\\
			Poisson $(\lambda\in\real_{++})$                                                                                                                                     &                            $x^+\in\real_+:~t\log\left(\frac{x^+}{\lambda}\right)=\frac{1}{x^+}-\frac{1}{\bar{x}}$     \medskip\\
			Multinomial $( n\in\nn, p\in\ri{\Delta_{(d)}})$                                                                                                                                &    $x^+\in \ri{n\Delta_{(d)}}:~ t\log\left(\frac{x^+_i(1-\sum_{j=1}^dp_j)}{p_i(n-\sum_{j=1}^dx^+_j)}\right)=\frac{1}{x_i^+}-\frac{1}{\bar{x}_i}$		
\medskip\\
			\def\arraystretch{1}\begin{tabular}[c]{@{}l@{}} Negative Multinomial $(p\in (0,1)^d,$\\
				$x_0\in\real_{++},~p_0:=1-\sum_{i=1}^d p_i>0)$	
			\end{tabular}                                                                                                                                                 &

				$x^+\in\real^d_{++}:~
				t\log\left(\frac{x^+_i}{p_i(x_0+\sum_{i=j}^dx^+_j)}\right)=\frac{1}{x_i^+}-\frac{1}{\bar{x}_i},
				$
			\medskip\\
			\def\arraystretch{1}\begin{tabular}[c]{@{}l@{}} Discrete Uniform \\ $(a,b\in\real:a<b)$      \end{tabular}                                                                  &       \def\arraystretch{1}\begin{tabular}[t]{@{}l@{}}$x^+=\bar{x}/(\bar{x}t\theta^++1)$
				where $\theta^+=0$ if $\bar{x}=(a+b)/2$, \medskip\\
				otherwise: $\theta^+\in\real\setminus\{0\}$:\medskip\\
				$\quad \frac{(b+1)\exp\left((b+1)\theta\right)-a\exp\left(a\theta\right)}{(\exp\left((b+1)\theta\right)-\exp\left(a\theta\right)}=\frac{\exp\left(\theta\right)}{\exp\left(\theta\right)-1}+\frac{\bar{x}}{t\bar{x}\theta^++1}$
			\end{tabular}                                                                                                                                                                                                                                                                                \medskip\\
			\def\arraystretch{1}\begin{tabular}[c]{@{}l@{}}Continuous Uniform \\ $(a,b\in\real:a\leq b)$ \end{tabular}                                                                         &       \def\arraystretch{1}\begin{tabular}[t]{@{}l@{}}$x^+=\bar{x}/(\bar{x}t\theta^++1)$
				where $\theta^+=0$ if $\bar{x}=(a+b)/2$, \medskip\\
				otherwise: $\theta^+\in\real\setminus\{0\}$:\medskip\\
					$\qquad\frac{b \exp(b \theta^+)-a \exp(a \theta^+)}{\exp(b \theta^+)-\exp(a \theta^+)}=\frac{1}{\theta^+}+\frac{\bar{x}}{t\bar{x}\theta^+ + 1}$
			\end{tabular}                                                                          \medskip\\
			Logistic $(\mu\in\real,~s\in\real_{++})$:                                                                                                                                      & \def\arraystretch{1}\begin{tabular}[t]{@{}l@{}}$x^+=\bar{x}/(\bar{x}t\theta^++1)$
				where $\theta^+=0$ if $\bar{x}=\mu$, \medskip\\
				otherwise: $\theta^+\in\real\setminus\{0\}$:\medskip\\
				$\qquad\frac{1}{\theta^+}+\frac{\pi s}{\tan{(-\pi s\theta^+)}}
				+\mu=\frac{\bar{x}}{\bar{x}t\theta^++1}$
			\end{tabular}                                                                              
			\\ \bottomrule
			\caption{Bregman Proximal Operators - Gamma ($\beta=1$) Linear Model ($h_j(x)=-\log(x_j)$)}
			\label{algo:tbl:breg_prox_Gamma}
		\end{longtable}
}

We close our study with particular models and algorithms.
\medskip 

\noindent{\bf Barcode Image Deblurring.} Restoration of a blurred and noisy image represented by a vector $\hat{y}\in\real^d$ can be cast as the following optimization problem:
\begin{gather}
	\label{algos:eq:barcode_model}
	\min\left\{\frac{1}{2}\|Ax-\hat{y}\|_2^2+\tau\varphi_R^*(x):x\in\real^d\right\}.
\end{gather}
$A\in\real^{d\times d}$ is the  blurring operator and $\tau>0$ is a regularization parameter. The noise is assumed to be Gaussian which explains the least-squares fidelity term which can be justified from the viewpoint of both the ML and, as we know from our study, the  MEM framework.
If the original image is a 2D barcode, a natural choice for the reference measure $R\in\PPP(\Omega)$ inducing $\varphi_R^*$ is a separable Bernoulli distribution with $p=1/2$ due to the binary nature of each pixel and no preference at each pixel to take either value.\footnote{As mentioned in \cref{mem:rmrk:special_cases_and_interpretation}, Bernoulli is a special case of the multinomial distribution. This, one dimensional, distribution is used to form a $d$-dimensional i.i.d as described in \cref{mem:rmrk:Cramer_seperability}.}  Additional information (symbology)  can be easily incorporated  by an appropriate adjustment of the parameter for each known pixel (see \cite{rioux2019blind}). Using the appropriate proximal operator from \cref{algo:tbl:breg_prox_Normal}, the BPG method for solving the model takes the form 
\begin{gather*}
	x^{k+1}_i\in\real: \quad x^{k+1}_i+t\tau\log\left(\frac{x^{k+1}_i}{1-x^{k+1}_i}\right)=x^k_i-t[A^T(Ax^k-\hat{y})]_i,\quad(i=1,2,\dots,d).
\end{gather*}
As mentioned above, our focus on the Bregman proximal gradient method is only for illustration purposes. Favorable accelerated algorithms that employ the proximal operators derived in this work are readily available and should be used in practice. The acceleration scheme applicable here is known as the Fast Iterative Shrinkage Thresholding Algorithm (FISTA) \cite{beck2009fast}.
\\


\noindent{\bf Natural Image Deblurring.} For natural image deblurring there is no 
obvious structure such as the binary one for barcodes. However, it is customary to assume that the image is piecewise smooth. A popular model that promotes piecewise constant restoration is the Rudin, Osher, and Fatemi (ROF) model \cite{rudin1992nonlinear} based on the total variation (TV) regularizer $\sum_{i=1}^dg(L_ix)$. 
Here,  $L_i\in\real^{2\times d}$ extracts the difference between the pixel $i$ and two adjacent pixels while $g$ stands for either the $l_1$ (isotropic TV) or $l_2$ (anisotropic TV) norm.  Variants that admit the same structure with other choices of  $g$ are also considered in the literature: in \cite[Subsection 6.2.3]{chambolle2011first}, a model with the Huber norm for  $g$ was shown to promote restoration prone to artificial flat areas. Alternatively, one may consider the pseudo-Huber norm that corresponds to an MEM regularizer induced by the multivariate normal inverse-Gaussian reference distribution with parameters $\mu=\beta=0,~\alpha=1$, and $\Sigma=I$. The resulting model is similar to \eqref{algos:eq:barcode_model} where the regularization term is substituted by $\sum_{i=1}^d\psi_R^*(L_ix)$. This model can be tackled by a primal-dual decomposition method that employs the appropriate proximal operator from \cref{algo:tbl:breg_prox_Normal}. For example, using the separability of the proximal operator \cite[Theorem 6.6]{beck2017first} and the extended Moreau decomposition \cite[Theorem 6.45]{beck2017first}, the update formula of the Chambolle-Pock algorithm \cite[Algorithm 1]{chambolle2011first} reads 
\begin{gather*}
	\def\arraystretch{2}
	\begin{array}{rll}
		y_i^{k+1} 
	&=\frac{\rho_i}{1+\rho_i}(y^k+sL_i z^k) &(i=1,2,\dots,d),\\ \text{with}&\rho_i\in\real_+:\rho_i^2(s\delta)^2+\left(\frac{\rho_i}{1+\rho_i}\right)^2\|y_i^k+sL_i z^k\|_2^2=1,&\\	
	x^{k+1} 
	&=(I+\tau A^TA)^{-1}\left(x^k-\tau(L^Ty^{k+1}-A^T\hat{y})\right),&\\ z^{k+1} &= 2x^{k+1}-x^k,&
	\end{array}
\end{gather*}
where $L^T=[L_1^T,\dots,L_d^T]\in\real^{d\times 2d}$, $y^k\in\real^{2d}:(y^k)^T=[(y_1^k)^T, \dots, (y_d^k)^T]$ with $y_i^k\in\real^2$ for all $i=1,2,\dots,d$) and $s,\tau$ are some positive step-sizes satisfying $s\tau\|L\|_2^2<1$.\withsmallskip

We point out that an efficient implementation of the above algorithm that takes into account the sparse and structured nature of the matrices $L$ and $A$, respectively, will result in a per-iteration complexity of the order $O(d\log{d})$. The same statement is true with regard to the BPG method in the previous and following examples. \\


\noindent {\bf Poisson Linear Inverse Problem.} Poisson linear inverse problems play a prominent role in various physical and medical imaging applications. The linear model proposed in \cite[Subsection 5.3]{bauschke2017descent} is simply the  MEM linear model with Poisson reference distribution. The authors of \cite{bauschke2017descent} suggest  $l_1$-regularization to deploy their BPG method. Alternatively, one may consider the MEM function induced by the  Laplace distribution with parameters $\mu=0$ and $b=1$. This setting leads to the following update formula of the BPG method. For $i=1,2,\dots,d$:
\begin{gather*}
\def\arraystretch{2}
	\begin{array}{rl}
\bar{x}^{k+1}_i  &\displaystyle=\exp\left(\log(x_i^k)-t\sum_{j=1}^ma_{ji}\log(\inner{a_j}{x^k}/\hat{y}_j)\right),    \\
x^{k+1}_i\in\real:  &\displaystyle t^2x_i^{k+1}+2t\log\left(\frac{x_i^{k+1}}{\bar{x}_i^{k+1}}\right) = x_i^{k+1}\left[\log\left(\frac{x_i^{k+1}}{\bar{x}_i^{k+1}}\right)\right]^2.
\end{array}
\end{gather*}




\bibliographystyle{plain}
\bibliography{MEM_references}

\appendix

\section{Cram\'er Rate Functions}
\label{apndx:sec:Cramer}
We present here the computations of all Cram\'er rate functions provided during our study. To this end, recall that the Cram\'er rate function $\psi_P^*$ is the  conjugate 
\[
\psi_P^*(y):=\sup\{\inner{y}{\theta}-\psi_P(\theta): \theta\in \real^d\}
\]
of the cumulant-generating function
\begin{gather*}
	\psi_P(\theta):=\log  M_P[\theta],
\end{gather*}
where $M_P[\theta]$ is the moment-generating function of the reference distribution $P\in\PPP(\Omega)$ (which one can simply look up at various places in the literature for the distributions considered here).

\subsubsection*{Multivariate Normal} For a normal distribution  with mean $\mu$ and covariance $\Sigma\succ 0$, its moment generating function is $M_P[\theta]=\exp(\inner{\mu}{\theta} +{\frac {1}{2}}\inner{\theta}{\Sigma \theta})$. Therefore, we find

\begin{gather*}
	\def\arraystretch{2}
	\begin{array}{rl}
		\psi_P^*(y)	
		&= \sup\left\{\inner{y}{\theta}-\log\left(\exp({\inner{\mu}{\theta} +{\frac {1}{2}}\inner{\theta}{\Sigma \theta}})\right):\theta\in\real^d\right\}\\
		&= \sup\left\{\inner{y}{\theta}-\inner{\mu}{\theta} -{\frac {1}{2}}\inner{\theta}{\Sigma \theta}:\theta\in\real^d\right\}.
	\end{array}
\end{gather*}
The maximumizer of the above quadratic optimization problem is  $\theta^* = \Sigma^{-1}(y-\mu)$, hence
\begin{gather*}
	\psi_P^*(y) = \frac{1}{2}(y-\mu)^T\Sigma^{-1}(y-\mu).
\end{gather*}

\subsubsection*{Multivariate Normal-inverse Gaussian}
The Multivariate Normal-inverse Gaussian distribution is defined by means of location ($\mu\in\real^d$), tail heaviness ($\alpha\in\real$), asymmetry ($\beta\in\real^d$), and scale ($\delta\in\real,~\Sigma\in\real^{d\times d}$) parameters satisfying $\alpha\geq\sqrt{\inner{\beta}{\Sigma\beta}}$, $\delta>0$ and $\Sigma\succ0$ \cite{barndorff1997normal}. In addition, let $\gamma:=\sqrt{\alpha^2-\inner{\beta}{\Sigma\beta}}$. Its moment-generating function is
\[
M_{P}[\theta]=\exp\left(\inner{\mu}{\theta}+\delta(\gamma-\sqrt{\alpha^2-\inner{\beta+\theta}{\Sigma(\beta+\theta)}})\right)\quad (\theta\in B_\alpha),
\]
for the ellipsoid $B_\alpha=\{\theta\in \real^d:\sqrt{\inner{\beta+\theta}{\Sigma(\beta+\theta)}}\leq\alpha\}.$  Observe that in this case $\psi_P(\theta)=\log\left(M_P[\theta]\right)$ is indeed steep and minimal.\withsmallskip

Now, in order to compute the Cram\'er rate function, we find that
\begin{equation}\label{app:eq:Cramer_NIG}
\psi_P^*(y)=\sup\left\{\inner{y-\mu}{\theta}-\delta(\gamma-\sqrt{\alpha^2-\inner{\beta+\theta}{\Sigma(\beta+\theta)}}):\theta\in B_\alpha\}\right.
\end{equation}
We consider two cases: if $y=\mu$, then it is evident that the optimal solution of the problem above is given by $\theta=-\beta$ and thus $\psi_P^*(\mu)=\delta(\alpha-\gamma)$. Consider the case $y\neq\mu$. Disregarding the feasibility constraints (which will be justified in the sequel), the first-order optimality condition is given by
\begin{gather*}
	y-\mu=\frac{\delta\Sigma(\beta+\theta)}{\sqrt{\alpha^2-\inner{\beta+\theta}{\Sigma(\beta+\theta)}}}.
\end{gather*}
From the above, we can derive 
\begin{gather*}
\inner{\beta+\theta}{\Sigma(\beta+\theta)} = \frac{\alpha^2\inner{y-\mu}{\Sigma^{-1}(y-\mu)}}{\delta^2+\inner{y-\mu}{\Sigma^{-1}(y-\mu)}}\quad\text{and}\quad\theta=-\beta+\frac{\alpha\Sigma^{-1}(y-\mu)}{\sqrt{\delta^2+\inner{y-\mu}{\Sigma^{-1}(y-\mu)}}}.
\end{gather*}
It is straightforward to verify that $\theta\in \inte B_\alpha$,  which retroactively justifies our choice to disregard the constraint before. Now, we can write the Cram\'er rate function as
\begin{gather*}
	\def\arraystretch{2}
	\begin{array}{rl}	&\psi_P^*(y)\\
	&=\displaystyle \inner{y-\mu}{-\beta+\frac{\alpha\Sigma^{-1}(y-\mu)}{\sqrt{\delta^2+\inner{y-\mu}{\Sigma^{-1}(y-\mu)}}}}-\delta\left(\gamma-\sqrt{\alpha^2-\frac{\alpha^2\inner{y-\mu}{\Sigma^{-1}(y-\mu)}}{\delta^2+\inner{y-\mu}{\Sigma^{-1}(y-\mu)}}}\right)\\
		&\displaystyle=\alpha\sqrt{\delta^2+(y-\mu)^T\Sigma^{-1}(y-\mu)}-\inner{\beta}{y-\mu}-\delta\gamma.
	\end{array}
\end{gather*}

\subsubsection*{Gamma} The Gamma distribution is parametrized by $\alpha,\beta>0$ and its moment generating function  is given by 
\[
M_{P}[\theta]=\left[1-\frac{\theta}{\beta}\right]^{-\alpha}\quad (\theta< \beta).
\]
Hence, its Cram\'er rate function reads
\begin{gather*}
	\def\arraystretch{2}
	\begin{array}{rl}
		\psi_P^*(y)
		&= \sup\left\{y\theta-\log\left(\left[1-\frac{\theta}{\beta}\right]^{-\alpha}\right):\theta<\beta\right\}\\
		&= \sup\left\{y\theta+\alpha\log\left(1-\frac{\theta}{\beta}\right):\theta<\beta\right\}.
	\end{array}
\end{gather*}
If $y\leq0$, then $\psi_P^*(y)=+\infty$ (with $\theta\rightarrow-\infty$). If $y>0$ then the first-order optimality conditions imply
\begin{gather*}
	y-\frac{\alpha}{\beta}\left(1-\frac{\theta}{\beta}\right)^{-1}=0\quad \Rightarrow \quad \theta=\beta-\frac{\alpha}{y}.
\end{gather*}
Thus, 
\begin{gather*}
	\psi_P^*(y) = \beta y-\alpha+\alpha \log\left(\frac{\alpha}{\beta y}\right), \quad y\in\real_{++}.
\end{gather*}

\subsubsection*{Laplace} The Laplace distribution is parameterized by its mean $\mu\in \real$ and scale $b>0$. Its MGF reads
\[
M_P[\theta]=\frac{\exp(\mu \theta)}{1-b^2\theta^2}\quad (|\theta|<1/b)
\]
Hence, its Cram\'er rate function reads
\[
\psi_P^*(y)=\sup\left\{(y-\mu)\theta+\log\left(1-b^2\theta^2\right):|\theta|<1/b\right\}.
\]
It is easy to see that $\log\left(1-b^2\theta^2\right)\leq 0$ for any $\theta$ such that $|\theta|<1/b$ and that $\log\left(1-b^2\theta^2\right)\rightarrow-\infty$ when $|\theta|\rightarrow 1/b$. Thus, we can conclude that 
$\psi_P^*(\mu)=0$ and for any $y\neq\mu$ the maximum of the above problem is attained at some point in the open interval $(0,1/b)$ for $y>\mu$ or in $(-1/b,0)$ for $y<\mu$. The first-order optimality conditions boil down to the quadratic equation
\[
\theta^2+\left(\frac{2}{y-\mu}\right)\theta-\frac{1}{b^2}=0
\]
Evaluating the roots of the resulting quadratic equation we conclude that the optimal solution is
\begin{gather*}
	\theta=\frac{1}{y-\mu}\left(\sqrt{1+\left(\frac{y-\mu}{b}\right)^2}-1\right)=\frac{1}{b\rho}\left(\sqrt{1+\rho^2}-1\right),
\end{gather*}
where we set $\rho:=\frac{y-\mu}{b}$. Evidently, $|\theta|<1/b$ holds for the solution we just derived.
Thus
\begin{gather*}
	\def\arraystretch{2}
	\begin{array}{rl}
		\psi_P^*(y) &= (y-\mu)\theta+\log\left(1-(b\theta)^2\right) \\
		&= \rho(b\theta)+\log\left(1-(b\theta)^2\right) \\
		&= \sqrt{1+\rho^2}-1+\log\left(1-\frac{1}{\rho^2}(\sqrt{1+\rho^2}-1)^2\right) \\
		&=\sqrt{1+\rho^2}-1+\log\left(1-\frac{1}{\rho^2}(1+\rho^2+1-2\sqrt{1+\rho^2})\right)\\
		&=\sqrt{1+\rho^2}-1+\log\left(\frac{2}{\rho^2}(\sqrt{1+\rho^2}-1)\right),
	\end{array}				 
\end{gather*}
and we can conclude that
\begin{gather*}
	\psi_P^*(y) = \begin{cases}
		0,& y=\mu,\\
		\sqrt{1+\left(\frac{y-\mu}{b}\right)^2}-1+\log\left(2\left(\frac{y-\mu}{b}\right)^{-2}\left[\sqrt{1+\left(\frac{y-\mu}{b}\right)^2}-1\right]\right),& y\neq \mu.		
	\end{cases}
\end{gather*}

\subsubsection*{Poisson} The Poisson distribution is parameterized by its rate $\lambda>0$. Its MGF reads
\[
M_P[\theta]=\exp(\lambda(\exp(t)-1)
\]
Consequently, its Cram\'er rate function is given by
\[
\psi_P^*(y)=\sup\left\{y\theta-\lambda(\exp(\theta)-1):\theta\in\real\right\}.
\]
If $y<0$ then it is evident from the above that $\psi_P^*(y)=+\infty$ (indeed, take $\theta\rightarrow -\infty$). 
Similarly, we can see that $\psi_P^*(0)=\lambda$. Otherwise, due to the first-order optimality conditions
\begin{gather*}
	y=\lambda \exp(\theta) \quad \Rightarrow\quad \theta=\log(y/\lambda),
\end{gather*}
we obtain that $\psi_P^*(y) = y\log(y/\lambda)- y+ \lambda$.

\subsubsection*{Multinomial}

We will use the following notation. The $i$th canonical unit vector is denoted by $e_i$ and the vector of all ones is denoted by $e$. The unit simplex is given by $\Delta_d:=\{y\in\real^d_+:\inner{e}{y}=1\}$.\withsmallskip

For $n\in\nn$ and $p\in\Delta_{d+1}$ we can write 
\begin{gather*}
	\def\arraystretch{2}
	\begin{array}{rl}
		\psi_P^*(\theta)&\displaystyle=\sup\left\{l(y,\theta):=\inner{y}{\theta}-\log\left(M_P[\theta]\right):\theta\in\real^{d+1}\right\}\\	
		&\displaystyle=\sup\left\{\inner{y}{\theta}-n\log\left(\sum_{i=1}^{d+1}p_i\exp(\theta_i)\right):\theta\in\real^{d+1}\right\}.	
	\end{array}
\end{gather*}
Let $I(p):=\left\{y\in\real^{d+1}: y_i=0~(p_i=0,~i=1,2,\dots,d+1)\right\}$. 
We can see that $\dom{\psi_P^*}=n\Delta_d\cap I(p)$. 
Indeed, if there exists $k\in\{1,2,\dots,d+1\}$ such that $y_k<0$ then by setting $\theta=-\alpha e_k$ we obtain that 
\begin{gather*}
	l(y,\theta) = \alpha|y_k| - n\log\left(p_k\exp(-\alpha)+\sum_{i\neq k}p_i\right).
\end{gather*}
If, $y\in\real^{d+1}$ but $\inner{e}{y}\neq n$ then by choosing $\theta=\alpha\sigma e$ where $\sigma = ~\text{sign}(\inner{e}{y}-n)$ we obtain that
\begin{gather*}
	l(y,\theta) = \alpha\sigma \inner{e}{y} - n\log\left(\exp(\alpha\sigma)\inner{e}{p}\right)=\alpha|\inner{e}{y} - n|.
\end{gather*}
If there exists $k\in \{i\in\{1,2,\dots,d+1\}:p_i=0\}$ such that $y_k>0$ then by setting $\theta=\alpha e_k$ we obtain
\begin{gather*}
	l(y,\theta) = \alpha y_k - n\log\left(\sum_{i\neq k}p_i\right).
\end{gather*}
In all cases, by taking $\alpha\rightarrow\infty$ it is evident that the problem is unbounded. \withsmallskip

We now address the case when $y\in\dom{\psi_P^*}=n\Delta_{d+1}\cap I(p)$. 
From the first-order optimality condition, we can deduce that for any $j=1,\dots,d+1$ such that $p_j>0$
\begin{gather*}
	y_j = \frac{np_j\exp(\theta_j)}{\sum_{i=1}^{d+1}p_i\exp(\theta_i)} \quad \Rightarrow\quad \theta_j = \log\left(\frac{y_j}{np_j}\right), 
\end{gather*}
for all $j=1,2,\dots,d+1$. Thus, under the convention that $0/0=1$, we can conclude that for $y\in n\Delta_{d+1}\cap I(p)$
\begin{gather*}
	\psi_P^*(y) =\sum_{i=1}^{d+1} y_i\log\left(\frac{y_i}{np_i}\right).
\end{gather*}

Cram\'er's rate function that corresponds to the multinomial distribution after reduction to a minimal form can be obtained from the above by eliminating one component of the vectors $y\in\real^{d+1}$ and $p\in\real^{d+1}$. Assuming, without the loss of generality, that $p_{d+1}>0$ we can plug in the above
\begin{gather*}
	y_{d+1} = n - \sum_{i=1}^{d} y_i, \qquad\text{and}\qquad p_{d+1} = 1 - \sum_{i=1}^{d}p_i, 
\end{gather*}
in order to obtain the Cram\'er rate function $\psi_P^*:\real^{d}\rightarrow\erl$. Hence, for $y\in\real^d$ and $p\in\Delta_{(d)}:=\{z\in\real^d_+:\inner{e}{z}\leq 1\}$ such that $\inner{e}{p}<1$ 
\begin{gather*}
	\psi_P^*(y) = \sum_{i=1}^{d} y_i\log\left(\frac{y_i}{np_i}\right)+ \left(n - \inner{e}{y}\right)\log\left(\frac{n - \inner{e}{y}}{n(1 - \inner{e}{p})}\right),
\end{gather*}
where, in this case, $\dom \psi_P^* = I(p)\cap\Delta_{(d)}$. 

\subsubsection*{Negative Multinomial}

Observing that $\Theta_P:=\{\theta\in\real^d:\sum_{i=1}^d p_i\exp(\theta_i)<1\}$ and using the definition of Cram\'er's rate function we can write 
\begin{gather*}
	\def\arraystretch{2.5}
	\begin{array}{rl}
		\psi_P^*(\theta)&\displaystyle=\sup\left\{l(y,\theta):=\inner{y}{\theta}-\log\left(M_P[\theta]\right):\theta\in\real^d\right\}\\	
		&\displaystyle=\sup\left\{\inner{y}{\theta}-\log\left(\left[\frac{p_0}{1-\sum_{i=1}^dp_i\exp(\theta_i)}\right]^{y_0}\right):\theta\in\Theta_P\right\}\\
		&\displaystyle=\sup\left\{\inner{y}{\theta}+y_0\log\left(1-\sum_{i=1}^dp_i\exp(\theta_i)\right):\theta\in\Theta_P\right\}-y_0\log(p_0).
	\end{array}
\end{gather*}



Let $I(p):=\left\{y\in\real^d: y_i=0~(p_i=0,~i=1,2,\dots,d)\right\}$. We can see that $\dom{\psi_P^*}=\real_{+}^d\cap I(p)$. Indeed, if there exists $k\in\{1,\dots,d\}$ such that $y_k<0$ then by setting $\theta=-\alpha e_k$ (recall that $e_k$ stands for the $k$th canonical unit vector) we obtain that 
\begin{gather*}
	l(y,\theta) +  y_0\log(p_0) = \alpha|y_k| + y_0\log\left(1-p_k\exp(-\alpha)-\sum_{i\neq k}p_i\right).
\end{gather*}
If there exists $k\in \{i\in\{1,2,\dots,d\}:p_i=0\}$ such that $y_k>0$ then by setting $\theta=\alpha e_k$ we obtain that
\begin{gather*}
	l(y,\theta)+y_0\log(p_0) = \alpha y_k + y_0\log\left(1-\sum_{i\neq k}p_i\right).
\end{gather*}
In both cases, by taking $\alpha\rightarrow\infty$ it is evident that the problem is unbounded. \withsmallskip

We now address the case when $y\in\dom{\psi_P^*}=\real^d_+\cap I(p)$. From the first-order optimality condition, we can deduce that 
\begin{gather}
	\label{apndx:eq:neg_mult_stat}
	y_j=\frac{y_0p_j\exp(\theta_j)}{1-\sum_{i=1}^dp_i\exp(\theta_i)} \quad \Rightarrow\quad \frac{y_j}{y_0}\left(1-\sum_{i=1}^dp_i\exp(\theta_i)\right) = p_j\exp(\theta_j), 
\end{gather}
for all $j=1,2,\dots,d$. Denoting $\sigma:=\sum_{i=1}^dp_i\exp(\theta_i)$, $\bar y:=\sum_{i=0}^dy_i$ and summing \eqref{apndx:eq:neg_mult_stat} for ${j=1,2,\dots,d}$ yields
\begin{gather*}
	(\bar{y}-y_0)\left(\frac{1-\sigma}{y_0}\right) = \sigma \qquad\Rightarrow\qquad \sigma=\frac{\bar{y}-y_0}{\bar{y}}.
\end{gather*}
The above, combined with \eqref{apndx:eq:neg_mult_stat} we obtain that for any $j=1,2,\dots,d$ such that $p_j\neq 0$ 
\begin{gather*}
	\theta_j = \log\left(\frac{y_j}{p_j\bar{y}}\right).
\end{gather*}
Thus, we can conclude that for $y\in \real^d_+\cap I(p)$ 
\begin{gather*}
	\psi_P^*(y) = \sum_{i=1}^d y_i\log\left(\frac{y_i}{p_i\bar{y}}\right)+y_0\log\left(\frac{y_0}{\bar{y}}\right)-y_0\log(p_0)=\sum_{i=0}^d y_i\log\left(\frac{y_i}{p_i\bar{y}}\right).
\end{gather*}
It is important to note that in the above $y\in\real^d$ is the function variable while $y_0\in\real$ is a fixed parameter.

%

\subsubsection*{Discrete Uniform} The discrete uniform distribution is parameterized by $a,b\in \mathbb Z$ with $a\leq b$. We set 
$\mu:=(a+b)/2$ and $n:=b-a+1$. Its MGF reads
\begin{gather*}
	M_P[\theta]=\begin{cases}
		\frac{\exp((b+1)\theta)-\exp(a\theta)}{n(\exp(\theta)-1)}, & \theta\neq 0,\\
		1, & \theta =0.
	\end{cases}
\end{gather*}
If $b=a$ then it is straightforward to verify that $\psi_P^*=\delta_{\{a\}}$ (degenerate distribution). We now turn to consider the case $b>a$. Since $M_P[\theta]$ is continuous at zero, we have
\begin{gather}
	\label{apndx:eq:Uniform_reformulations_discrete}
	\def\arraystretch{2.5}
	\begin{array}{rl}
		\psi_P^*(y)&=\sup\left\{y\theta-\log\left(\frac{\exp((b+1)\theta)-\exp(a\theta)}{n(\exp(\theta)-1)}\right):\theta\in\real\right\}\\
		&=\sup\left\{(y-b)\theta-\log\left(\frac{\exp(\theta)-\exp(-(b-a)\theta)}{n(\exp(\theta)-1)}\right):\theta\in\real\right\}\\
		&=\sup\left\{(y-a)\theta-\log\left(\frac{\exp((b-a+1)\theta)-1}{n(\exp(\theta)-1)}\right):\theta\in\real\right\}\\	&=\sup\left\{(y-\mu)\theta-\log\left(\frac{\exp((b-\mu+1)\theta)-\exp((a-\mu)\theta)}{n(\exp(\theta)-1)}\right):\theta\in\real\right\}.
	\end{array}
\end{gather}
If $y>b$ then from the second formulation above we can conclude that $\psi_P^*(y)=+\infty$ by taking $\theta\rightarrow+\infty$. Similarly, if, $y<a$, then from the third formulation above we can conclude that $\psi_P^*(y)=+\infty$ by taking $\theta\rightarrow-\infty$. If $y=\mu$ then 
the last formulation of \eqref{apndx:eq:Uniform_reformulations_discrete} 
can be written as
\begin{gather*}
	\sup\left\{-\log\left(\frac{\exp\left(\gamma\theta\right)-\exp\left(-\gamma\theta\right)}{2\gamma\left(\exp(\theta/2)-\exp(-\theta/2)\right)}\right):\theta\in\real\right\}=-\log\left(\inf\left\{\phi(\theta):\theta\in\real\right\}\right),	
\end{gather*}
where $\gamma:=(b-a+1)/2>1/2$ and 
\begin{gather*}
	\phi(\theta):=\begin{cases}
		\frac{\exp\left(\gamma\theta\right)-\exp\left(-\gamma\theta\right)}{2\gamma\left(\exp(\theta/2)-\exp(-\theta/2)\right)}, & \theta\neq 0,\\
		1, & \theta=0.
	\end{cases}
\end{gather*}
By using L'H\^opital's rule and some straightforward arguments, it is easy to verify that 
\begin{gather*}
	\lim\limits_{|\theta|\rightarrow+\infty}\phi(\theta)=+\infty,\quad \lim\limits_{|\theta|\rightarrow 0}\phi(\theta)=1\quad\text{and}\quad \phi(\theta)=\phi(-\theta).
\end{gather*}
Thus, $\phi$ is continuous at zero (which justifies its definition), coercive and symmetric. Since the log-normalizer function $\psi_P(\theta)=\log\left(M_P[\theta]\right)$ is strictly convex, 
we conclude that if a solution exists it must be unique. The coercivity of $\phi$ implies that a solution exists, and due to the symmetry of $\phi$ we can conclude that it must be zero. To summarize,  in this case, $\psi_P^*(\mu)=0$ (with $\theta=0$). If $y\neq \mu$ such that $a\leq y\leq b$ then the optimal solution to \eqref{apndx:eq:Uniform_reformulations_discrete} is nonzero and by the first-order optimality conditions it must satisfy 
\begin{gather}
	\label{apndx:eq:Uniform_stat_discrete}
	y-\frac{(b+1)\exp((b+1)\theta)-a\exp(a\theta)}{\exp((b+1)\theta)-\exp(a\theta)}+\frac{\exp(\theta)}{\exp(\theta)-1}=0.
\end{gather}
Therefore, using \eqref{apndx:eq:Uniform_reformulations_discrete} we can summarize that for $y\in[a,b]=\dom \psi_{P}^*$:
\begin{gather*}
	\psi_P^*(y) = \begin{cases}
		0, & y=\mu,\\
		(y-\mu)\theta-\log\left(\frac{\exp((b-\mu+1)\theta)-\exp((a-\mu)\theta)}{n(\exp(\theta)-1)}\right), & y\neq\mu,
	\end{cases}
\end{gather*}
where $\theta$ is the root of \eqref{apndx:eq:Uniform_stat_discrete}.

\subsubsection*{Continuous Uniform}


By definition 
\begin{gather*}
	\psi_P^*(y)=\sup\left\{y\theta-\log\left(M_P[\theta]\right):\theta\in\real\right\},
\end{gather*}
where for $a<b$ we have that
\begin{gather*}
	M_P[\theta]=\begin{cases}
		\frac{\exp(b\theta)-\exp(a\theta)}{(b-a)\theta}, & \theta\neq 0,\\
		1, & \theta =0.
	\end{cases}
\end{gather*}
Since $M_P[\theta]$ is continuous at zero, then, without loss of generality, we obtain
\begin{gather}
	\label{apndx:eq:Uniform_reformulations}
	\def\arraystretch{2.5}
	\begin{array}{rl}
		\psi_P^*(y)&=\sup\left\{y\theta-\log\left(\frac{\exp(b\theta)-\exp(a\theta)}{(b-a)\theta}\right):\theta\in\real\right\}\\
		&=\sup\left\{(y-b)\theta-\log\left(\frac{1-\exp(-(b-a)\theta)}{(b-a)\theta}\right):\theta\in\real\right\}\\
		&=\sup\left\{(y-a)\theta-\log\left(\frac{\exp((b-a)\theta)-1}{(b-a)\theta}\right):\theta\in\real\right\}\\	&=\sup\left\{(y-\mu)\theta-\log\left(\frac{\exp((b-\mu)\theta)-\exp((a-\mu)\theta)}{(b-a)\theta}\right):\theta\in\real\right\}.
	\end{array}
\end{gather}
where $\mu = (a+b)/2$. If $y\geq b$ then from the second formulation above we can conclude that $\psi_P^*(y)=\infty$ by taking $\theta\rightarrow\infty$. Similarly, if, $y\leq a$, then from the third formulation above we can conclude that $\psi_P^*(y)=\infty$ by taking $\theta\rightarrow-\infty$. If $y=\mu$ then 
the last formulation of \eqref{apndx:eq:Uniform_reformulations} 
can be written as
\begin{gather*}
	\sup\left\{-\log\left(\frac{\exp(\gamma\theta)-\exp(-\gamma\theta)}{2\gamma\theta}\right):\theta\in\real\right\}=-\log\left(\inf\left\{\phi(\theta):\theta\in\real\right\}\right),	
\end{gather*}
where $\gamma:=(b-a)/2>0$ and 
\begin{gather*}
	\phi(\theta):=\begin{cases}
		\frac{\exp(\gamma\theta)-\exp(-\gamma\theta)}{2\gamma\theta}, & \theta\neq 0,\\
		1, & \theta=0.
	\end{cases}
\end{gather*}
By using L'H\^opital's rule and some straightforward arguments, it is easy to verify that 
\begin{gather*}
	\lim\limits_{|\theta|\rightarrow+\infty}\phi(\theta)=+\infty,\quad \lim\limits_{|\theta|\rightarrow 0}\phi(\theta)=1\quad\text{and}\quad \phi(\theta)=\phi(-\theta).
\end{gather*}
Thus, $\phi$ is continuous at zero (which justifies its definition), coercive and symmetric. Since the log-normalizer function $\psi_P(\theta)=\log\left(M_P[\theta]\right)$ is strictly convex we can conclude that if a solution exists it must be unique. The coercivity of $\phi$ implies that a solution exists, and due to the symmetry of $\phi$ we can conclude that it must be zero. To summarize,  in this case, $\psi_P^*(\mu)=0$ (with $\theta=0$). If $y\neq \mu$ such that $a<y<b$ then the optimal solution to \eqref{apndx:eq:Uniform_reformulations} is nonzero and by the first-order optimality conditions it must satisfy 
\begin{gather}
	\label{apndx:eq:Uniform_stat}
	y-\frac{b\exp(b\theta)-a\exp(a\theta)}{\exp(b\theta)-\exp(a\theta)}+\frac{1}{\theta}=0.
\end{gather}
Therefore, using \eqref{apndx:eq:Uniform_reformulations} we can summarize that for $y\in(a,b)=\dom \psi_{P}^*$:
\begin{gather*}
	\psi_P^*(y) = \begin{cases}
		0, & y=\mu,\\
		(y-\mu)\theta-\log\left(\frac{\exp((b-\mu)\theta)-\exp((a-\mu)\theta)}{(b-a)\theta}\right), & y\neq\mu,
	\end{cases}
\end{gather*}
where $\theta$ is the root of \eqref{apndx:eq:Uniform_stat}.

\subsubsection*{Logistic}

The moment generating function for Logistic distribution with location and scaling parameters $\mu$ and $s>0$, respectively, is given by 
\begin{gather*}
	M_P[\theta] = \exp(\mu y)B(1-s\theta,1+s\theta), \qquad s\theta\in(-1,1),
\end{gather*}
where $B(\cdot,\cdot)$ stands for the \emph{Beta function} 
\begin{gather*}
	B(\alpha,\beta) = \int_0^1t^{\alpha-1}(1-t)^{\beta-1}dt.
\end{gather*}
The beta function and the closely related \emph{gamma function}
\begin{gather*}
	\Gamma(\alpha) = \int_0^{\infty} t^{\alpha-1}\exp(-t)dt, \qquad \alpha>0,
\end{gather*}
share the following well-known relation
\begin{gather}
	\label{apndx:eq:beta_gamma_relation}
	B(\alpha,\beta) = \frac{\Gamma(\alpha)\Gamma(\beta)}{\Gamma(\alpha+\beta)}.	
\end{gather}  
The gamma function is an extension of the factorial as for a positive integer $\alpha$ 
it holds that \linebreak $\Gamma(\alpha)=(\alpha-1)!$. In the following, we will use the well-known function equations
\begin{gather}
	\label{apndx:eq:beta_func_1}
	B(\alpha+1,\beta) = B(\alpha,\beta)\frac{\alpha}{\alpha+\beta},
\end{gather}
and 
\begin{gather}
	\label{apndx:eq:beta_eulers_reflection}
	B(\alpha,1-\alpha) = \Gamma(1-\alpha)\Gamma(\alpha) =\frac{\pi}{\sin(\pi \alpha)},\qquad \alpha\notin\mathbb{Z}.
\end{gather}
The latter is known as Euler's reflection formula or Euler's function equation. Further details and proofs for both \eqref{apndx:eq:beta_func_1} and \eqref{apndx:eq:beta_eulers_reflection} can be found, for example, in \cite{artin1964gamma}. \withsmallskip

Since $s\theta\in(-1,1)$, the above relations imply that for any $\theta\neq 0$
\begin{gather*}	
	\phi_s(\theta):=B(1-s\theta,1+s\theta)\overset{\eqref{apndx:eq:beta_func_1}}{=}B(-s\theta,1+s\theta)\frac{-s\theta}{-s\theta+1+s\theta}\overset{\eqref{apndx:eq:beta_eulers_reflection}}{=}\frac{-\pi s\theta}{\sin(-\pi s\theta)}.
\end{gather*}
For $\theta=0$ we can verify by \eqref{apndx:eq:beta_gamma_relation} that 
\begin{gather*}
	\phi_s(\theta)=B_s(1-s\theta,1+s\theta)=1.
\end{gather*}
Thus, we can summarize
\begin{gather}
	\label{apndx:eq:beta_mgf_equiv}
	\phi_s(\theta)=B(1-s\theta,1+s\theta)=\begin{cases}
		1, & s\theta=0,\\
		\frac{-\pi s\theta}{\sin(-\pi s\theta)}, & s\theta\in(-1,1)\setminus\{0\}.
	\end{cases}
\end{gather}
Using L'H\^opital's rule we can verify that $\phi_s$ is continuous at $\theta=0$. Since $-\pi s\theta\geq\sin(-\pi s\theta)$ for all $s\theta\in(-1,1)$ we can conclude that $\phi_s(\theta)\geq 1$ for all $s\theta\in(-1,1)$ and equality ($\phi_s(\theta)= 1$) holds if and only if $s\theta=1$. Taking $|s\theta|\rightarrow 1$ it is evident that $\phi_s(\theta)\rightarrow\infty$. In addition, for any $\theta\neq 0$ the derivative of $\phi$ is given by 
\begin{gather*}
	\phi_s'(\theta) = -\pi s\left[\frac{\sin(-\pi s\theta)+\pi s\theta\cos(-\pi s\theta)}{\sin^2(-\pi s\theta)}\right],
\end{gather*}
and consequently
\begin{gather}
	\label{apndx:eq:phi_val_div_frac}
	\frac{\phi_s'(\theta)}{\phi_s(\theta)} = \frac{\sin(-\pi s\theta)+\pi s\theta\cos(-\pi s\theta)}{\theta\sin(-\pi s\theta)}.
\end{gather}
We are now ready to evaluate Cram\'er's rate function that corresponds to the logistic distribution. 
\begin{gather}
	\label{apndx:eq:Cramer_conj_def}
	\def\arraystretch{2}
	\begin{array}{rl}
		\psi_P^*(y)&=\sup\left\{y\theta-\log\left(M_P[\theta]\right):\theta\in\real\right\}\\	
		&= \sup\left\{(y-\mu)\theta-\log\left(\phi_s(\theta)\right):\theta\in\real\right\}.
	\end{array}
\end{gather}
If $y=\mu$ then the discussion that follows equation \eqref{apndx:eq:beta_mgf_equiv} implies that $\sup\{-\log(\phi_s(\theta)):\theta\in\real\}\leq 0$ where the upper bound is attained for $\theta=0$ (since $\phi_s(\theta)\geq1$ and $\phi_s(0)=1$). Thus, we can conclude that $\psi_P^*(\mu)=0$. If $y\neq\mu$ then the optimal solution to \eqref{apndx:eq:Cramer_conj_def} satisfies $\theta\neq 0$. Since, in addition, for $|s\theta|\rightarrow 1$ we have that $\phi_s(\theta)\rightarrow\infty$, and consequently, $-\log(\phi_s(\theta))\rightarrow -\infty$, an optimal solution to \eqref{apndx:eq:Cramer_conj_def} for the case $y\neq\mu$ must satisfy the first-order optimality conditions 
\begin{gather}
	\label{apndx:eq:Logistic_stat}
	0= y-\mu - \frac{\phi_s'(\theta)}{\phi_s(\theta)} = y-\mu -\frac{1}{\theta} - \frac{\pi s}{\tan{(-\pi s\theta)}},
\end{gather}
where the above follows from \eqref{apndx:eq:phi_val_div_frac}. 
To summarize, 
\begin{gather*}
	\psi_P^*(y)=\begin{cases}
		0, & y=\mu,\\
		(y-\mu)\theta-\log\left(B(1-s\theta,1+s\theta)\right), & y\neq \mu,
	\end{cases}
\end{gather*}
where $\theta\in\real$ is the nonzero root of \eqref{apndx:eq:Logistic_stat}.

\end{document}